\documentclass{elsarticle}
\usepackage{a4wide}
\usepackage[utf8x]{inputenc}
\usepackage[english]{babel}
\usepackage[T1]{fontenc}
\usepackage{enumitem}

\usepackage{mathrsfs,stmaryrd}

\usepackage{amsmath,amsfonts,amssymb,amsthm}

\usepackage{tikz}
\usetikzlibrary{fit}
\usetikzlibrary{decorations.pathreplacing}
\usetikzlibrary{calc}

\usepackage{tikz-cd}

\usepackage{hyperref}
\newtheorem{question}{Question}

\theoremstyle{definition}
\newtheorem{definition}{Definition}[section]
\newtheorem{proposition}[definition]{Proposition}
\newtheorem{example}[definition]{Example}
\newtheorem{examples}[definition]{Examples}

\newtheorem{theorem}[definition]{Theorem}
\newtheorem{corollary}[definition]{Corollary}
\newtheorem{conjecture}[definition]{Conjecture}

\theoremstyle{remark}
\newtheorem{remark}{Remark}

\definecolor{ColWhite}{RGB}{255,255,255} 
\definecolor{labriBlue}{RGB}{0,114,201}
\definecolor{labriGreen}{RGB}{10,177,134}
\definecolor{labriTopRed}{RGB}{152,52,54}
\definecolor{grayVertex}{RGB}{210,210,210}


\tikzstyle{Centering}=[{baseline={([yshift=-0.5ex]current bounding box.center)}}]					
\tikzstyle{NodeGraph}=[circle,draw=black,inner sep=1pt,minimum size=4mm, thick,font=\scriptsize]	
\tikzstyle{UnlabeledNodeGraph}=[NodeGraph,minimum size=2mm]										
\tikzstyle{RootGraph}=[NodeGraph,rectangle]														
\tikzstyle{EdgeGraph}=[labriBlue,cap=round,very thick]												
\tikzstyle{ArcGraph}=[EdgeGraph,->]																	%
\tikzstyle{NullNode}=[font=\normalsize]
\tikzstyle{BlackNode}=[circle,draw=black,fill=black,inner sep=1pt,minimum size=3mm, thick,font=\scriptsize]
\tikzstyle{GrayNode}=[circle,line width=0.5,draw=black,fill=grayVertex,inner sep=1pt,minimum size=3mm,font=\scriptsize]
\tikzstyle{WhiteNode}=[circle,line width=0.5,draw=black,inner sep=1pt,minimum size=2mm,font=\scriptsize]
\tikzstyle{SmallBlackNode}=[circle,draw=black,fill=black,inner sep=1pt,minimum size=1mm, thick,font=\scriptsize]
\tikzstyle{SmallGrayNode}=[circle,line width=0.2,draw=black,fill=grayVertex,inner sep=1pt,minimum size=1.5mm,font=\scriptsize]
\tikzstyle{SmallGrayNodeii}=[circle,line width=0.2,draw=black,fill=grayVertex,inner sep=1pt,minimum size=1.2mm,font=\scriptsize]
\tikzstyle{MedGrayNode}=[circle,line width=0.5,draw=black,fill=grayVertex,inner sep=1pt,minimum size=2mm,font=\scriptsize]
\tikzstyle{SmallWhiteNode}=[circle,line width=0.2,draw=black,,inner sep=1pt,minimum size=1mm,,font=\scriptsize]
\tikzstyle{WhiteLabelNode}=[circle,fill=white,draw=black,inner sep=1pt,minimum size=5mm, thick,font=\scriptsize]
\tikzstyle{SmallWhiteLabelNode}=[circle,draw=black,inner sep=1pt,minimum size=3.5mm, thick,font=\scriptsize]

\tikzstyle{BlackPointNode}=[circle,fill=black,draw=black,inner sep=0pt,minimum size=1mm, thick,font=\scriptsize]

\tikzstyle{GrayPointNode}=[circle,fill=gray,draw=gray,inner sep=0pt,minimum size=1mm, thick,font=\scriptsize]

\usetikzlibrary{arrows,positioning}
 \tikzset{
    >=stealth,
    font=\smallsize,
    possible world/.style={circle,draw,thick,align=center},
    real world/.style={double,circle,draw,thick,align=center},
    minimum size=40pt
}
\tikzstyle{vertex}=[circle, draw, inner sep=0pt, minimum size=2pt]
\tikzset{
  pics/carc/.style args={#1:#2:#3}{
    code={
      \draw[pic actions] (#1:#3) arc(#1:#2:#3);
    }
  }
}

\newcommand{\GrayV}{\hspace{1pt}\begin{tikzpicture}[baseline=-2]\node[SmallGrayNode] at (0,0){};\end{tikzpicture}\hspace{1pt}}
\newcommand{\GrayVii}{\hspace{1pt}\begin{tikzpicture}[baseline=-2]\node[SmallGrayNodeii] at (0,0){};\end{tikzpicture}\hspace{1pt}}

\newcommand{\BlackV}{\hspace{1pt}\begin{tikzpicture}[baseline=-2]\node[SmallBlackNode] at (0,0){};\end{tikzpicture}\hspace{1pt}}

\definecolor{lightlightgray}{rgb}{0.85,0.85,0.85}


\newcommand{\modifJ}[2]{#2}  
  
\newcommand{\modifT}[2]{#2}

\newcommand{\OEIS}[1]{\href{http://oeis.org/#1}{{#1}}}              


\newcommand{\set}[1]{\left\{#1\right\}}                                 
\newcommand{\card}[1]{\left|#1\right|}            	                                


\newcommand{\mat}[1]{\mathbf{#1}}										
\newcommand{\perm}{\text{perm}}

\newcommand{\Cyc}{\mathfrak{C}}											

\begin{document}

\begin{frontmatter}

\title{Realizable cycle structures in digraphs}

\author[ULCO]{Jean Fromentin}

\author[ULCO]{Pierre-Louis Giscard\corref{corr}} 
\cortext[corr]{Corresponding author}
\ead{giscard@univ-littoral.fr}

\author[ULCO]{Th\'eo Karaboghossian}

\address[ULCO]{Univ. Littoral C\^{o}te d'Opale, UR 2597, LMPA, Laboratoire de Math\'ematiques Pures et Appliqu\'ees Joseph Liouville, F-62100 Calais, France}

\begin{abstract}
Simple cycles on a digraph form a trace monoid under the rule that two such cycles commute if and only if they are vertex disjoint. This rule describes the spatial configuration of simple cycles on the digraph. Cartier and Foata have showed that all combinatorial properties of closed walks are dictated by this trace monoid. We find that most graph properties can be lost while maintaining the monoidal structure of simple cycles and thus cannot be inferred from it, including vertex-transitivity, regularity, planarity, Hamiltonicity, graph spectra, degree distribution and more. 
Conversely we find that even allowing for multidigraphs, many configurations of simple cycles are not possible at all. The problem of determining whether a given configuration of simple cycles is realizable is highly non-trivial. We show at least that it is decidable and equivalent to the existence of integer solutions to systems of polynomial equations. 
\end{abstract}

\begin{keyword}
 walk\sep simple cycle\sep  trace monoid\sep hike monoid
\MSC[2010] 05C20\sep 05C38\sep 20M05
\end{keyword}

\end{frontmatter}

\section{Introduction}
The precise nature of the relation between graphs and their walks has, to the best of our knowledge, not yet been thoroughly scrutinized. It seems to be often assumed that walks are slave objects to graphs, in that once the graph is specified its walks can be studied and, in principle, be perfectly known. As a corollary it is expected that properties of the graph leave indelible imprints on its walks, from which the former can thus be inferred. These simple arguments mask the subtle nature of the relation between walks and graphs. For example, consider the following bidirected, vertex-transitive, hence regular, bipartite graph 
\begin{equation*}
\begin{tikzpicture}[line width=1,Centering]
\def\radi{1}
\node[NullNode] at (-1.5,0.05){$G=$};
\node[GrayNode](a) at (45:\radi){};
\node[GrayNode](b) at (135:\radi){};
\node[GrayNode](c) at (225:\radi){};
\node[GrayNode](d) at (315:\radi){};
\path[->](a) edge[bend right=20] (b);
\path[->](b) edge[bend right=20] (c);
\path[->](c) edge[bend right=20] (d);
\path[->](d) edge[bend right=20] (a);
\path[->](a) edge[bend right=20] (d);
\path[->](b) edge[bend right=20] (a);
\path[->](c) edge[bend right=20] (b);
\path[->](d) edge[bend right=20] (c);
\end{tikzpicture}
\end{equation*}
\noindent and let $W_{G:\GrayVii\to\GrayVii'}$ be the set of all walks from any vertex $\GrayV$ to any vertex $\GrayV'$ on $G$. Now consider the following two digraphs 
\begin{equation*}
\def\radi{1}
\def\radii{1}
\def\radiii{0.5}
\begin{tikzpicture}[line width=1,Centering]
\node[NullNode] at (-1.7,0.05){$G'=$};
\node[GrayNode](a) at (45:\radi){};
\node[GrayNode](b) at (135:\radi){};
\node[GrayNode](c) at (225:\radi){};
\node[GrayNode](d) at (315:\radi){};
\node[WhiteNode](a1) at (90:\radii){};
\node[WhiteNode](a2) at (90:\radiii){};
\node[WhiteNode](b1) at (180:\radii){};
\node[WhiteNode](b2) at (180:\radiii){};
\node[WhiteNode](c1) at (270:\radii){};
\node[WhiteNode](c2) at (270:\radiii){};
\node[WhiteNode](d1) at (0:\radii){};
\node[WhiteNode](d2) at (0:\radiii){};
\path[->] (a) edge [bend right=10] (a1);
\path[->] (b) edge [bend right=10] (a2);
\path[->] (a2) edge [bend right=10] (a);
\path[->] (a1) edge [bend right=10] (b);
\path[->] (b) edge [bend right=10] (b1);
\path[->] (c) edge [bend right=10] (b2);
\path[->] (b2) edge [bend right=10] (b);
\path[->] (b1) edge [bend right=10] (c);
\path[->] (c) edge [bend right=10] (c1);
\path[->] (d) edge [bend right=10] (c2);
\path[->] (c2) edge [bend right=10] (c);
\path[->] (c1) edge [bend right=10] (d);
\path[->] (d) edge [bend right=10] (d1);
\path[->] (a) edge [bend right=10] (d2);
\path[->] (d2) edge [bend right=10] (d);
\path[->] (d1) edge [bend right=10] (a);
\begin{scope}[shift={(5,0)}]
\node[NullNode] at (-1.7,0.05){$G''=$};
\node[GrayNode](a) at (45:\radi){};
\node[GrayNode](b) at (135:\radi){};
\node[GrayNode](c) at (225:\radi){};
\node[GrayNode](d) at (315:\radi){};
\node[WhiteNode](a1) at (75:\radii){};
\node[WhiteNode](a3) at (105:\radii){};
\node[WhiteNode](a2) at (90:\radiii){};
\node[WhiteNode](b1) at (165:\radii){};
\node[WhiteNode](b3) at (195:\radii){};
\node[WhiteNode](b2) at (180:\radiii){};
\node[WhiteNode](c1) at (255:\radii){};
\node[WhiteNode](c3) at (285:\radii){};
\node[WhiteNode](c2) at (270:\radiii){};
\node[WhiteNode](d1) at (345:\radii){};
\node[WhiteNode](d3) at (15:\radii){};
\node[WhiteNode](d2) at (0:\radiii){};
\path[->] (a) edge [bend right=10] (a1);
\path[->] (b) edge [bend right=10] (a2);
\path[->] (a2) edge [bend right=10] (a);
\path[->] (a1) edge [bend right=10] (a3);
\path[->] (a3) edge [bend right=10] (b);
\path[->] (b) edge [bend right=10] (b1);
\path[->] (c) edge [bend right=10] (b2);
\path[->] (b2) edge [bend right=10] (b);
\path[->] (b1) edge [bend right=10] (b3);
\path[->] (b3) edge [bend right=10] (c);
\path[->] (c) edge [bend right=10] (c1);
\path[->] (d) edge [bend right=10] (c2);
\path[->] (c2) edge [bend right=10] (c);
\path[->] (c1) edge [bend right=10] (c3);
\path[->] (c3) edge [bend right=10] (d);
\path[->] (d) edge [bend right=10] (d1);
\path[->] (a) edge [bend right=10] (d2);
\path[->] (d2) edge [bend right=10] (d);
\path[->] (d1) edge [bend right=10] (d3);
\path[->] (d3) edge [bend right=10] (a);
\end{scope}
\end{tikzpicture}
\end{equation*}
\noindent Remark that $G'$ is directed and neither vertex-transitive nor regular. Yet, the set of all walks from any gray vertex to any gray vertex on $G'$ is in bijection with the corresponding set $W_{G:\GrayV\to\GrayV'}$ on $G$. 
The correspondence between both walk sets is simple: the white transient vertices of $G'$ do not add any new cycle to the digraph as compared to $G$ and so only cause the lengths of all of these walks to be multiplied by~$2$. The structure of every individual walk, i.e., the way it is composed of cycles and of at most one open simple path, is clearly preserved. Rigorously, this makes the bijection into an isomorphism between monoids on walks and walk-like objects, called hikes, something we discuss in details later. 
Similarly, sets of walks between gray vertices on $G''$ are in bijection with the corresponding sets on $G$. In this case however the bijection does not act plainly on the length of individual walks but it continues to be a monoid isomorphism preserving the internal structure of walks. Remarkably here, $G''$ is not even bipartite. That is, from the point of view of graph theory the transformations from $G$ to $G'$ and from $G$ to $G''$ are nontrivial and subtle. 
They correspond to the loss of respectively three and four fundamental graph properties namely bidirectedness, vertex-transivity, regularity and bipartiteness. Yet walks sets are essentially unchanged by these alterations. 

At the very least, these observations suggest a rather loose relationship between graph properties and walk properties. It raises the question of whether there are more graph transformations--excluding graph automorphisms--that preserve walk sets, and if so are they all somewhat trivial, as above with the addition of transient vertices?  
{This question indirectly sprung up in the fields of applied network analysis and machine learning. For example it was noted that vertex centralities, mathematical quantities designed to grasp the relative importance of nodes in a graph, sometimes fail to do so meaningfully \cite{Qi2012,Opsahl2010}.} They can for {instance} predict that an outlying vertex is just as \modifJ{`central'}{``central''} as another one which appears to be at the heart of a graph: 
\begin{equation*}
\begin{tikzpicture}[line width=0.7,Centering]
\def\radi{0.75}
\def\radii{1.35}
\def\radiii{2}
\def\radiv{2.5}
\node[BlackNode](v8) at (0,0){};
\node[WhiteNode](v9) at (0:\radi){};
\node[WhiteNode](v18) at (108:\radi){};
\node[WhiteNode](v19) at (144:\radi){};
\node[WhiteNode](v22) at (180:\radi){};
\node[WhiteNode](v23) at (36:\radi){};
\node[WhiteNode](v24) at (72:\radi){};
\node[WhiteNode](v32) at (216:\radi){};
\node[WhiteNode](v29) at (252:\radi){};
\node[WhiteNode](v25) at (288:\radi){};
\node[WhiteNode](v7) at (324:\radi){};
\node[WhiteNode](v10) at (0:\radii){};
\node[WhiteNode](v17) at (108:\radii){};
\node[WhiteNode](v20) at (144:\radii){};
\node[WhiteNode](v21) at (180:\radii){};
\node[WhiteNode](v31) at (216:\radii){};
\node[WhiteNode](v28) at (252:\radii){};
\node[WhiteNode](v26) at (288:\radii){};
\node[WhiteNode](v6) at (324:\radii){};
\node[WhiteNode](v27) at (270:\radiii){};
\node[WhiteNode](v30) at (234:\radiii){};
\node[WhiteNode](v5) at (-63:\radiii){};
\node[WhiteNode](v11) at (0:\radiii){};
\node[WhiteNode](v3) at (-21:\radiii){};
\node[WhiteNode](v4) at (-42:\radiii){};
\node[WhiteNode](v12) at (27:\radii){};
\node[WhiteNode](v15) at (54:\radii){};
\node[WhiteNode](v16) at (81:\radii){};
\node[WhiteNode](v13) at (27:\radiii){};
\node[WhiteNode](v14) at (40.5:\radiii){};
\node[WhiteNode](v33) at (54:\radiii){};
\node[WhiteNode](v34) at (67.5:\radiii){};
\node[WhiteNode](v35) at (81:\radiii){};
\node[WhiteNode](v2) at (-21:\radiv){};
\node[GrayNode](v1) at (0:\radiv){};
\draw[->](v8) to (v9);
\draw[->](v18) to (v8);
\draw[->](v8) to (v19);
\draw[->](v22) to (v8);
\draw[->](v8) to (v23);
\draw[->](v24) to (v8);
\draw[->](v32) to (v8);
\draw[->](v29) to (v8);
\path[->](v25) edge[bend left=15] (v8);
\path[->](v8) edge[bend left=15] (v25);
\draw[->](v7) to (v8);
\draw[->](v8) to (v9);
\path[->](v23) edge[bend right=10] (v24);
\draw[->](v19) to (v20);
\draw[->](v21) to (v22);
\path[->](v20) edge[bend right=10] (v21);
\path[->](v20) edge[bend left=20] (v18);
\path[->](v9) edge (v10);
\path[->](v17) edge[] (v18);
\draw[->](v31) to (v32);
\draw[->](v28) to (v29);
\draw[->](v25) to (v26);
\path[->](v6) edge (v7);
\path[->](v26) edge[bend left=10] (v27);
\path[->](v27) edge[bend left=10] (v28);
\path[->](v27) edge[bend left=10] (v30);
\path[->](v30) edge[bend left=10] (v31);
\path[->](v5) edge[bend right=10] (v6);
\path[->](v5) edge[bend left=10] (v27);
\path[->](v10) edge (v11);
\path[->](v11) edge[bend left=10] (v3);
\path[->](v3) edge[bend left=10] (v4);
\path[->](v4) edge[bend left=10] (v5);
\path[->](v16) edge[bend right=10] (v17);
\path[->](v10) edge[bend right=10] (v12);
\path[->](v12) edge[bend right=10] (v15);
\path[->](v15) edge[bend right=10] (v16);
\draw[->](v12) to (v13);
\draw[->](v35) to (v16);
\draw[->](v3) to (v2);
\path[->](v2) edge[bend right=20] (v1);
\path[->](v1) edge[bend right=10] (v2);
\path[->](v13) edge[bend right=10] (v14);
\path[->](v14) edge[bend right=10] (v33);
\path[->](v33) edge[bend right=10] (v34);
\path[->](v34) edge[bend right=10] (v35);
\path[->](v35) edge [out=70,in=140,distance=15] (v35);
\path[->](v2) edge [out=210,in=280,distance=15] (v2);
\end{tikzpicture}
\end{equation*}

\noindent On this digraph with adjacency matrix $\mat{A}$, the gray and black vertices are deemed to be equally central by the subgraph centrality measure {$(e^{\mat{A}})_{\GrayVii}= (e^{\mat{A}})_{\BlackV}$}, a widely used index of vertex importance in real-world networks {\cite{Estrada2}}.
Similarly, measures aimed at grasping the degree of {``similarity''} between networks are prone to failures. {In the field of machine learning these measures, called graph-kernels, are used primarily for tasks of automatic graph classification. In particular it is known that walk-based graph-kernels suffer from the existence of differing graph structures with similar walk counts or arrangement of walks, hindering the automatic distinction of these graph structures, see  e.g. \cite{ramon03graphkernels, Mahe2009, Kriege2020} and references therein. To illustrate this observation, consider the following} four directed graphs 
\begin{equation*}
\begin{tikzpicture}[line width=0.7,Centering]
\def\radi{0.75}
\def\radii{1}
\node[WhiteNode](a1) at (45:\radi){};
\node[WhiteNode](a2) at (135:\radi){};
\node[WhiteNode](a3) at (225:\radi){};
\node[WhiteNode](a4) at (315:\radi){};
\path[->](a1) edge[bend right=20] (a2);
\path[->](a2) edge[bend right=20] (a3);
\path[->](a3) edge[bend right=20] (a4);
\path[->](a4) edge[bend right=20] (a1);
\path[->](a3) edge[bend right=20] (a2);
\path[->](a4) edge[bend right=20] (a3);
\path[->](a1) edge[bend right=20] (a4);
\node[NullNode] at (0,-1.2){$G_1$};
\begin{scope}[shift={(2.5,0)}]
\node[WhiteNode](b1) at (135:\radi){};
\node[WhiteNode](b2) at (225:\radi){};
\node[WhiteNode](b3) at (315:\radi){};
\node[WhiteNode](b4) at (45:\radi){};
\path[->](b1) edge[bend right=20] (b2);
\path[->](b2) edge[bend right=20] (b1);
\path[->](b2) edge[bend right=20] (b3);
\path[->](b3) edge[bend right=20] (b2);
\path[->](b3) edge[bend right=20] (b4);
\path[->](b4) edge[bend right=20] (b3);
\path[->](b4) edge[bend right=20] (b1);
\path[->](b3) edge [out=255,in=315,distance=15] (b3);
\node[NullNode] at (0,-1.2){$G_2$};
\end{scope}
\def\radi{0.2}
\def\radii{0.8}
\begin{scope}[shift={(5,0)}]
\node[GrayNode](c1) at (0:\radii){};
\node[WhiteNode](c2) at (45:\radii){};
\node[GrayNode](c3) at (90:\radii){};
\node[GrayNode](c4) at (135:\radii){};
\node[GrayNode](c5) at (180:\radii){};
\node[WhiteNode](c6) at (225:\radii){};
\node[WhiteNode](c7) at (270:\radii){};
\node[WhiteNode](c8) at (315:\radii){};
\node[WhiteNode](c9) at (90:\radi){};
\node[WhiteNode](c10) at (270:\radi){};
\path[->](c1) edge[bend right=10] (c2);
\path[->](c2) edge[bend right=10] (c3);
\path[->](c3) edge[bend right=10] (c4);
\path[->](c4) edge[bend right=10] (c5);
\path[->](c5) edge[bend right=10] (c6);
\path[->](c6) edge[bend right=10] (c7);
\path[->](c7) edge[bend right=10] (c8);
\path[->](c8) edge[bend right=10] (c1);
\path[->](c4) edge[bend left=0] (c9);
\path[->](c9) edge[bend left=0] (c1);
\path[<-](c5) edge[bend right=10] (c10);
\path[<-](c10) edge[bend right=10] (c1);
\path[->](c4) edge[out=105,in=165,distance=15] (c4);
\node[NullNode] at (0,-1.2){$G_3$};
\end{scope}
\begin{scope}[shift={(9,0)}]
\def\radi{0.5}
\node[GrayNode](d1) at (45:\radi){};
\node[WhiteNode](d2) at (135:\radi){};
\node[WhiteNode](d3) at (225:\radi){};
\node[GrayNode](d4) at (315:\radi){};
\begin{scope}[shift={(-2.175*\radi,0)}]
\node[WhiteNode](d5) at (77.143:1.629*\radi){};
\node[WhiteNode](d6) at (128.571:1.629*\radi){};
\node[WhiteNode](d7) at (180:1.629*\radi){};
\node[WhiteNode](d8) at (231.428:1.629*\radi){};
\node[WhiteNode](d9) at (282.857:1.629*\radi){};
\node[WhiteNode](d17) at (0,0){};
\end{scope}
\begin{scope}[shift={(2.65*\radi,0)}]
\node[WhiteNode](d10) at (240:2.067*\radi){};
\node[WhiteNode](d11) at (280:2.067*\radi){};
\node[GrayNode](d12) at (320:2.067*\radi){};
\node[WhiteNode](d13) at (0:2.067*\radi){};
\node[GrayNode](d14) at (40:2.067*\radi){};
\node[WhiteNode](d15) at (80:2.067*\radi){};
\node[WhiteNode](d16) at (120:2.067*\radi){};
\node[WhiteNode](d18) at (30:3.2*\radi){};
\node[WhiteNode](d19) at (50:3.2*\radi){};
\node[WhiteNode](d20) at (260:\radi){};
\end{scope}
\path[->](d1) edge[bend right=10] (d4);
\path[->](d4) edge (d3);
\path[->](d3) edge[bend right=15] (d2);
\path[->](d2) edge (d1);
\path[->](d2) edge[bend right=15] (d5);
\path[->](d5) edge[bend right=15] (d6);
\path[->](d6) edge[bend right=15] (d7);
\path[->](d7) edge[bend right=15] (d8);
\path[->](d7) edge[bend right=15] (d8);
\path[->](d9) edge[bend left=15] (d8);
\path[->](d3) edge[bend left=15] (d9);
\path[->](d4) edge[bend right=10] (d10);
\path[->](d10) edge[bend right=10] (d11);
\path[->](d11) edge[bend right=10] (d12);
\path[->](d12) edge[bend right=10] (d13);
\path[->](d13) edge[bend right=10] (d14);
\path[->](d14) edge[bend right=10] (d15);
\path[->](d15) edge[bend right=10] (d16);
\path[->](d16) edge[bend right=10] (d1);
\path[->](d8) edge[bend right=5] (d17);
\path[->](d17) edge[bend right=5] (d5);
\path[->](d6) edge[out=100,in=160,distance=15] (d6);
\path[->](d14) edge[bend right=10] (d18);
\path[->](d18) edge[bend right=20] (d19);
\path[->](d19) edge[bend right=10] (d14);
\path[->](d10) edge[bend left=20] (d20);
\path[->](d20) edge[bend left=20] (d11);
\path[->](d4) to (d3);
\path[->](d3) to (d2);
\path[->](d2) to (d1);
\node[NullNode] at (0,-1.2){$G_4$};
\end{scope}
\end{tikzpicture}
\end{equation*}
\noindent While digraphs $G_1$ and $G_2$ might seem most similar with one another among all four, sets of walks between any pairs of gray vertices on graphs $G_3$ and $G_4$ are in bijection with sets of walks on~$G_1$. As in the earlier example, this bijection preserves the structure of individual walks. 
At the opposite, there is no such correspondence between the set of all walks on graph $G_2$ and those on $G_1$.\\[-.5em]

When questioned, these failures have at times been put down to the underlying non-rigorous notions of ``importance of a node'' and of ``similarity between graphs'' as we \textit{intuitively} understand them. It is argued that such an understanding, imprecise and fraught with preconceptions, is difficult to express rigorously in mathematical terms thus leading to seemingly absurd results. But the failures could also run deeper and be yet more manifestations of the misappreciated mathematical relation between  graphs  and walks. Indeed almost all of the measures proposed so far for quantifying such notions as network centrality and graph similarity are algebraic quantities which translate into statements about closed walks. If indeed the cause of the problem is that graphs and their walks give relatively little or at least indirect information about one another, then this issue should be made mathematically rigorous. Considering in particular the case of closed walks on strongly connected digraphs, is perfect knowledge of the former sufficient to determine fundamental properties of the later, possibly up to some triviality? 

\modifJ{In this work, we shall answer negatively to this question}{In this work, we answer the preceding question in the negative}: not only are there many transformations between strongly connected digraphs that profoundly alter their properties yet leave their sets of closed walks invariant, but these transformations are diverse and far from trivial. Some produce bijective mappings between walk sets that not only preserve their monoidal structure as in the above examples, but also the length of \textit{all} closed walks. Algebraic quantities routinely used to characterize graphs are left invariant under such mappings. A complete classification of all walk-preserving graph transformations seems to be particularly arduous. The problem finds a wider context in the assertion that a digraph can be reconstructed based solely on structural information about its simple cycles, something which, we find, fails to hold in many a strange way. The complementary question, namely deciding whether a digraph exhibiting certain structural relations between simple cycles exists at all turns out to be unexpectedly difficult. We show at least that this question is decidable and give a meaning for and examples of algebraically closed sets of walks and walk-like objects that cannot, by themselves, be drawn on digraphs. In one more unexpected result we find that these \modifJ{`undrawable'}{``undrawable''} walks do exist on larger digraphs where they are accompanied by a host of algebraically unrelated walks. 

In order to talk {about} closed walks and digraphs in a clearly separate manner so as to untangle their thorny relationship, it is necessary to have a mathematical language for describing sets of closed walks and walk-like objects independently from the digraphs that sustain them.
Such a language emerged in a series of contributions starting 
from the pioneering work by Cartier and Foata on partially 
commutative monoids \cite{cartier1969}, through Viennot's heaps 
of pieces theory 
\cite{viennot1989heaps,krattenthaler2006theory} to Giscard and 
Rochet's hike monoids \cite{GiscardRochet2016}. 
 All of the questions presented above then find their natural 
formulation as statements about trace and hike monoids. This is 
what we present below. {An alternative approach consisting in studying the syntactic structure of walk sets using language theory has also been developed \cite{Mosbah1997,Mosbah2000,Thwaite2014,Lindorfer2020}.}

\subsection{The language of closed walks: hike monoids}
\subsubsection{Notations for graphs and rooted walks}
While we begin by recalling standard definitions for graphs, we introduce somewhat less common concepts for walks, of which we advise the reader to take special notice.\\[-.5em]

 A \emph{graph} $G=(V,E)$ is a finite set of vertices $V$ and a finite set $E$ of distinct paired vertices, called edges, denoted $\{i,j\}$, $i,j\in V$. 
  A \emph{digraph} $G=(V,E)$ is a finite set of vertices $V$ and a finite set $E\subseteq V^2$ of  {\em directed edges} (or {\em arcs}), denoted $(i,j)$ for the arc from $i$ to $j$. 
 A \emph{directed multigraph} (or \emph{multidigraph}) is defined the same way as a digraph, except that $E$ is a multiset. {An edge of~$E$ is then denoted $(i,j)_k$, the integer $k$ specifying which edge from $i$ to $j$ we consider.} We denote by $\mat{A}$ the adjacency matrix of $G$ defined as $\mat{A}_{ij}:=n$ with $n\geq 0$ the number of directed edges $(i, j)_{k}\in E$ from vertex $i\in V$ to vertex $j\in V$. 
 
 
  We say that $G'=(V',E')$ is a {\em subgraph} of a multidigraph $G$, denoted by  $G'\subseteq G$, if $V'\subseteq V$ and $E'\subseteq E$. We say that $G'$ is 
an {\em induced subgraph} of a digraph $G$ if furthermore we have $E'=V'^2\cap E$, that is $G'$ is obtained from $G$ by deleting some of its vertices and only the edges adjacent to them.\\[-.5em] 

A \textit{rooted walk}, or rooted path, of length $\ell$ from vertex $i$ to vertex $j$ on a multi directed graph $G$ is a contiguous sequence of $\ell$ arcs starting from $i$ and ending in $j$, e.g. $w = (i, i_1)_{k_1}(i_1, i_2)_{k_2} \cdots (i_{\ell-1}, j)_{k_\ell}$ (a sequence of arcs is said to be contiguous if each arc but the first one starts where the previous ended). 
The rooted walk $w$ is \textit{open} if $i \neq j$ and \textit{closed} otherwise, in which case it is also called {\em rooted cycle}. A rooted cycle $(i_0, i_1)_{k_1}(i_1, i_2)_{k_2} \cdots (i_{\ell-1}, i_0)_{k_\ell}$ of non-zero length for which all vertices $i_t$ are distinct is said to be {\em simple}. A self-loop $(i, i)_k$ is considered a rooted simple cycle of length one. On digraphs we may also represent walks unambiguously as ordered sequences of vertices $w=i,i_1,\cdots ,i_{\ell-1},j$.

Discarding the piece of information regarding the position of the root turns rooted simple cycles into \emph{simple cycles}. 
A similar notion concerning rooted closed walks is presented in \ref{subsec:monoids}
An \textit{induced cycle} is a simple cycle for which no pair of visited vertices is linked by an arc that does not belong to the cycle.\\[-.5em] 

A {\em strongly connected component} of a digraph $G$ is a maximal  digraph 
$G'\subseteq G$ such that for every pair of vertices $v_1, v_2$ in $G'$, there is a rooted path in $G'$ from $v_1$ to $v_2$. A digraph $G$ is then said to be {\em strongly connected} if it is its sole strongly connected component.

\subsubsection{Trace monoids and hike monoids}\label{subsec:monoids}
Let $\Sigma$ be an alphabet then its Kleene star $\Sigma^*$ designates the set of all {finite} words on the letters of $\Sigma$. Now let $\mathcal{I}\subseteq \Sigma^2$ be a set of pairs of letters. This set defines a rule, called independence relation, which affirms that pairs of letters in $\mathcal{I}$ are independent and can be commuted when they occur next to each other in a word. Thus $\mathcal{I}$ induces an equivalence relation $\sim_{\mathcal{I}}$ between words $w_1,w_2\in \Sigma^*$ with $w_1\sim_\mathcal{I} w_2$ if an only if it is possible to pass from $w_1$ to $w_2$ by commuting adjacent pairs of independent letters. Then the \emph{trace monoid} $\mathcal{T}=\Sigma^*/\!\sim_\mathcal{I}$, is the free partially commutative monoid formed by the $\sim_{\mathcal{I}}$ equivalence classes on $\Sigma^*$. These classes are generically called \emph{traces}. At the heart of the trace monoid $\mathcal{T}$ is the partially commutative structure induced by $\mathcal{I}$. This structure is best represented as a graph, called the \textit{dependency graph} $H$ of $\mathcal{T}$. It is the graph for which vertices represent all letters of the alphabet $\Sigma$ and two vertices $v_i$ and $v_j$ are joined by an undirected edge if and only if letters $i,j\in\Sigma$ are \textit{not} allowed to commute per $\mathcal{I}$. Formally, $H$ is the complement graph of $(\Sigma, \mathcal{I})$.\\

Trace monoids where introduced by Pierre Cartier and Dominique Foata in their quest for a purely combinatorial proof of MacMahon's master theorem \cite{cartier1969,MacMahon1915}. They considered more specifically the trace monoid on the alphabet of labeled directed edges $\Sigma = E$ of a digraph $G=(V, E)$ with independence relation 
$$
\mathcal{I}_{\text{CF}} := \big\{\left\{(i, j),(k, l)\right\}:\, i\neq k\big\},
$$
where $(i,j)\in E$ and $(k,l)\in E$ are directed edges of $G$. This rule implies that two directed edges with different starting points are allowed to commute. For example, words $(1,2)(2,3)(1,4)$ and $(2,3)(1,2)(1,4)$ belong to the same Cartier-Foata trace, while $(2,3)(1,4)(1,2)$ belongs to a distinct trace due to the forbidden commutation of $(1,4)$ with $(1,2)$ required to pass from the former trace to the latter. This specific instance of trace monoid is sometimes called a \emph{Cartier-Foata monoid}.\\

Of particular importance for the rest of this study is the submonoid of a Cartier-Foata monoid formed by all traces for which the numbers of incoming and outgoing edges at each vertex are equal. This submonoid is a called a \textit{hike monoid} and will be denoted $\mathcal{H}$. It turns out \cite{GiscardRochet2016} that {hike monoids have a simpler presentation as partially commutative monoids on the alphabet $\mathcal{C}$ of directed graph simple cycles}, $\mathcal{H}\simeq \mathcal{C}^\ast/\mathcal{I}_{\text{Hike}}$, with the independence relation
$$
\mathcal{I}_{\text{Hike}} :=\big\{\left\{c_1,c_2\right\}:\,V(c_1)\cap V(c_2)=\emptyset\big\},
$$
where $V(c_i)$ denotes the set of vertices visited by cycle $c_i\in\mathcal{C}$ and $\mathcal{C}$ is the set of all simple cycles. This rule means that two simple cycles are allowed to commute if and only if they have no vertex in common. 
We emphasize that simple cycles $c\in\mathcal{C}$ are not rooted, rather they are considered up to cyclic permutations of their vertices, i.e., their starting point is irrelevant. This is because in the present context a \textit{simple cycle} really is a Cartier-Foata trace and by $\mathcal{I}_{\text{CF}}$ two words of $E^\ast$ where all vertices have one incoming and one outgoing edge and which differ only by their starting point are equivalent. 
For example, in a digraph $(1, 2)(2,3)(3,1)\sim_{\mathcal{I}_{\text{CF}}}(2,3)(3,1)(1,2)$: both words represent the same triangle. By contrast, we verify that the orientation of the simple cycles is preserved by the {Cartier-Foata independence relation $\mathcal{I}_{\text{CF}}$ since it does not allow reversing edge directions}. So for example $(1,2)(2,3)(3,1)$ and $(1,3)(3,2)(2,1)$ are distinct. 
The dependency graph of a hike monoid will be called the \emph{hike dependency graph}.

Elements of hike monoids are termed \textit{hikes} (Cartier and Foata used the French term ``circuits'' for the hikes \cite{cartier1969}, however this name  is now widely used in graph theory to designate other objects so \cite{GiscardRochet2016} adopted the term ``hikes'' to avoid confusion). Hikes are equivalent classes on words of simple cycles and are mathematically best understood  as heaps of such cycles on a graph. In this case, the simple cycles are pieces piled upon one another in such a way that two simple cycles can only be put at the same level if they share no vertex in common.
Hikes include \textit{closed walks} as special cases, more precisely it was shown that a hike $h=c_1\cdots c_k$ whose right-most simple cycle $c_k$ is unique is a walk $w$ in the sense that the words of $E^*$ belonging to the equivalence class $w$ are exactly all the rooted closed walks starting from the vertices of $c_k$ \cite{GiscardRochet2016}.
An arbitrary hike is a walk-like object (hence the name) appearing, when depicted graphically, to be a collection of closed walks. 
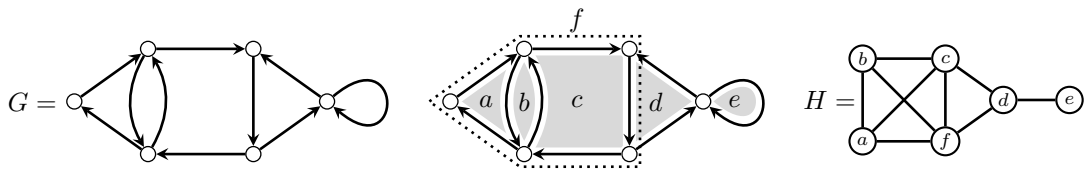
\begin{figure}[h!]
\vspace{-4mm}
    \centering
\begin{tikzpicture}[line width=1,Centering]
\def\radi{1.4}
\node[WhiteNode] (a) at (0,0){};
\node[WhiteNode] (b) at (0,\radi){};
\node[WhiteNode] (c) at (\radi,\radi){};
\node[WhiteNode] (d) at (\radi,0){};
\node[WhiteNode] (e) at (-0.7*\radi,0.5*\radi){};
\node[WhiteNode] (f) at (1.7*\radi,0.5*\radi){};
\draw[->] (b) to (c);
\draw[->] (c) to (d);
\draw[->] (d) to (a);
\draw[->] (a) to (e);
\draw[->] (e) to (b);
\draw[->] (d) to (f);
\draw[->] (f) to (c);
\path[->] (f) edge[out=40,in=-40,distance=35] (f);
\path[->] (a) edge[bend right=30] (b);
\path[->] (b) edge[bend right=30] (a);
\node[NullNode](l) at (-1.1*\radi,0.5*\radi){$G=$};
\begin{scope}[shift={(5,0)}]
\def\radi{1.4}
\def\radii{0.14}
\def\sep{5}
\draw[fill=lightlightgray](-0.7*\radi,0.5*\radi) -- (0,\radi) -- (\radi,\radi) -- (1.7*\radi,0.5*\radi) -- (\radi,0) -- (0,0);
\draw[fill=lightlightgray,color=lightlightgray](1.7*\radi,0.5*\radi) -- (2*\radi,0.3*\radi) -- (2.2*\radi,0.4*\radi) -- (2.2*\radi,0.6*\radi) -- (2*\radi,0.7*\radi);
\fill[color=white] (0,0) circle(\radii);
\fill[color=white] (0,\radi) circle(\radii);
\fill[color=white] (\radi,0) circle(\radii);
\fill[color=white] (\radi,\radi) circle(\radii);
\fill[color=white] (-0.7*\radi,0.5*\radi) circle(\radii);
\fill[color=white] (1.7*\radi,0.5*\radi) circle(\radii);
\node[WhiteNode] (a) at (0,0){};
\node[WhiteNode] (b) at (0,\radi){};
\node[WhiteNode] (c) at (\radi,\radi){};
\node[WhiteNode] (d) at (\radi,0){};
\node[WhiteNode] (e) at (-0.7*\radi,0.5*\radi){};
\node[WhiteNode] (f) at (1.7*\radi,0.5*\radi){};
\draw[color=white,line width=\sep] (b) to (c);
\draw[color=white,line width=\sep] (c) to (d);
\draw[color=white,line width=\sep] (d) to (a);
\draw[color=white,line width=\sep] (a) to (e);
\draw[color=white,line width=\sep] (e) to (b);
\draw[color=white,line width=\sep] (d) to (f);
\draw[color=white,line width=\sep] (f) to (c);
\path[color=white,line width=\sep] (a) edge[bend right=30] (b);
\path[color=white,line width=\sep] (b) edge[bend right=30] (a);
\path[color=white,line width=\sep] (f) edge[out=40,in=-40,distance=35] (f);
\draw[dotted](-0.9*\radi,0.5*\radi) -- (-0.04*\radi,1.12*\radi) -- (1.1*\radi,1.12*\radi) -- (1.1*\radi,-0.12*\radi) -- (-0.04*\radi,-0.12*\radi)  -- (-0.9*\radi,0.5*\radi);
\draw[->] (b) to (c);
\draw[->] (c) to (d);
\draw[->] (d) to (a);
\draw[->] (a) to (e);
\draw[->] (e) to (b);
\draw[->] (d) to (f);
\draw[->] (f) to (c);
\path[->] (f) edge[out=40,in=-40,distance=35] (f);
\path[->] (a) edge[bend right=30] (b);
\path[->] (b) edge[bend right=30] (a);
\node[NullNode] (l) at (2*\radi,0.5*\radi){$e$};
 \node[NullNode] (l) at (0.5*\radi,0.5*\radi){$c$};
\node[NullNode] (l) at (0,0.5*\radi){$b$};
\node[NullNode] (l) at (-0.36*\radi,0.5*\radi){$a$};
\node[NullNode] (l) at (1.25*\radi,0.5*\radi){$d$};
\node[NullNode] (l) at (0.5*\radi,1.27*\radi){$f$};
\end{scope}
\begin{scope}[shift={(9.5,0.17)}]
\def\radi{1.1}
\node[SmallWhiteLabelNode] (a) at (0,0){$a$};
\node[SmallWhiteLabelNode] (b) at (0,\radi){$b$};
\node[SmallWhiteLabelNode] (c) at (\radi,\radi){$c$};
\node[SmallWhiteLabelNode] (f) at (\radi,0){$f$};
\node[SmallWhiteLabelNode] (d) at (1.7*\radi,0.5*\radi){$d$};
\node[SmallWhiteLabelNode] (e) at (2.5*\radi,0.5*\radi){$e$};
\draw(a) to (b);
\draw(b) to (c);
\draw(a) to (c);
\draw(c) to (d);
\draw(d) to (f);
\draw(d) to (e);
\draw(a) to (f);
\draw(c) to (f);
\draw(b) to (f);
\node[NullNode](l) at (-0.4*\radi,0.5*\radi){$H=$};
\end{scope}
\end{tikzpicture}
\vspace{0mm}
\caption{An example of a digraph $G$ (left), the simple cycles on $G$ (middle) and the corresponding hike dependency graph $H$ (right).}
\end{figure}

\section{Preliminaries}\label{section_observations}
With the help of dependency graphs, hike and trace monoids provide a mean of discussing the properties of sets of walks and walk-like objects without explicitly referring to the graph on which these exist. Indeed, the dependency graph $H$ of a hike monoid $\mathcal{H}$ encodes the {``cycle skeleton''} of a \modifT{graph}{digraph} $G$, as vertices of $H$ are simple cycles on $G$ and edges in $H$ exist whenever two simple cycles of $G$ share at least one vertex. {In particular, sets of closed walks and more generally of hikes on two digraphs $G$ and $G'$ sharing the \textit{same} hike dependency graph $H$ are related through a bijection. This bijection stems from the isomorphism between their hike monoids $\mathcal{H}_{G}$ and $\mathcal{H}_{G'}$. Indeed, these monoids have the same presentation because they have the same number of generators (this being the number of nodes in the hike dependency graph $H$) as well as the same commutation relations between these generators (the edges in the complement $H^c$ of $H$). Then $\mathcal{H}_{G}$ and $\mathcal{H}_{G'}$ are isomorphic and there is a bijection between their sets of words, that is the sets of hikes (and thus of walks) on $G$ and $G'$}. Conversely, the existence of an isomorphism between hike monoids $\mathcal{H}_{G}$ and $\mathcal{H}_{G'}$ implies that $G$ and $G'$ share the same hike dependency graph. Relating graph properties and walk properties is thus equivalent to understanding which hike dependency graphs do exist and what properties of a digraph can we ascertain from its sole hike dependency graph.\\[-.5em]


Let $\phi$ be the map which
to any digraph $G$ associates $\phi(G)$, the dependency graph of the hike monoid of $G$. We say that two digraphs $G_1$ and $G_2$ are 
 $\phi$-equivalent if $\phi(G_1) = \phi(G_2)$ and we denote this by $G_1\sim_{\phi} G_2$. Any digraph transformation passing between two $\phi$-equivalent digraphs implements an isomorphism between their hike monoids since these two digraphs share the same hike dependency graph. 
The problem of understanding the relation between graph properties and walk properties can therefore be formulated as two questions on $\phi$:

\begin{question}[$\phi$-Surjectivity.] \label{question_surj}
	Given a graph $H$, is there a digraph $G$ such that $\phi(G) = H$? 
\end{question}

If this is the case we say that $H$ is {\em realizable} and that $G$ {\em realizes} $H$. Question~1 is equivalent to asking which trace monoid are hike monoids since any graph $H$ is necessarily the dependency graph of a trace monoid.

\begin{question}[$\phi$-Injectivity.] \label{question_inj}
	Given a graph $H$ determine $\phi^{-1}(H)$. Equivalently, what are all digraph transformations that induce isomorphisms of hike monoids\modifJ{ ?}{?}
\end{question}

From the examples presented earlier in the introduction we know already that $\phi$ is not injective. A good question is thus to characterize the class $\phi^{-1}(H)$ of all digraphs with hike dependency graph $H$. This, in turn, is equivalent to determining all digraphs transformations that induce isomorphisms of hike monoids since any two digraphs in $\phi^{-1}(H)$ are related by such a transformation.\\[-.5em]

Observe that hike monoids, map $\phi$ and questions~\ref{question_surj} and \ref{question_inj} naturally extend to 
multidigraphs. In fact the set of all graphs realizable by multidigraphs is exactly the set of  graphs realizable by digraphs since every digraph is a multidigraph and, 
for any directed multigraph, we can construct a digraph with identical hike monoid by adding transient vertices in the middle of each directed multi-edge. As a consequence, from now on we work equally with multidigraphs and digraphs referring to both as \modifJ{`digraphs'}{``digraphs''}.

{For $n$ a positive integer, denote by $[n]$ the set $\set{1,\hdots, n}$.}
Before we address the surjectivity and injectivity questions, we begin with three properties of the map $\phi$ that reduce considerations to connected dependency graphs $H$ and strongly connected multidigraphs $G$:

\begin{proposition}{Let $G$ be a digraph. We have}
	\begin{enumerate}
		\item If $G$ is strongly connected, then its hike dependency graph $\phi(G)$ is a connected graph.
		\item Let $G_1,\dots, G_n$ be the strongly connected components of $G$. Then $\phi(G) = \bigsqcup_{i\in {[n]}} \phi(G_i)$.
		\item Let $H$ be a connected realizable graph. Then $H$ is realizable by a strongly connected digraph.
	\end{enumerate}
\end{proposition}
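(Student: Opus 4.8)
The plan is to prove the three parts in order, since each builds intuition for the next. The key observation throughout is that a simple cycle in $G$ is an object that lives entirely within a single strongly connected component: every vertex of a simple cycle can reach every other vertex of that same cycle (by following the cycle), so all vertices of a simple cycle belong to one strongly connected component. Consequently the vertex set $\mathcal{C}$ of simple cycles of $G$ partitions into the simple cycles of the various strongly connected components $G_1,\dots,G_n$, and the generators of $\phi(G)$ are thus partitioned accordingly.

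For part (2), I would argue that two simple cycles lying in \emph{different} strongly connected components are necessarily vertex-disjoint (if they shared a vertex, that vertex would force the two cycles into a common strongly connected component). Hence no edge of $\phi(G)$ can join a simple cycle of $G_i$ to one of $G_j$ for $i\neq j$, while edges \emph{within} the cycles of a single $G_i$ are governed solely by vertex-sharing inside $G_i$ and therefore coincide with the edges of $\phi(G_i)$. This yields exactly the disjoint union $\phi(G) = \bigsqcup_{i\in[n]} \phi(G_i)$. Part (1) then follows as a special case of this structural picture, but it also needs the converse flavour: I would show that if $G$ is strongly connected, any two simple cycles can be ``linked'' by a chain of pairwise-intersecting simple cycles, so that $\phi(G)$ is connected. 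The natural route is to take two simple cycles $c,c'$, pick a vertex $u\in V(c)$ and $v\in V(c')$, use strong connectivity to find a closed walk through $u$ and $v$, and then decompose that closed walk into simple cycles that overlap consecutively, bridging $c$ to $c'$ in the hike dependency graph.

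For part (3), the strategy is contrapositive in spirit: given a connected realizable $H$, take any $G$ realizing it and invoke part (2) together with connectedness of $H$. Since $\phi(G) = \bigsqcup_i \phi(G_i)$ is connected, all but one of the $\phi(G_i)$ must be empty; that is, only one strongly connected component $G_{i_0}$ contains any simple cycle at all, and $\phi(G_{i_0}) = H$. The remaining components contribute no generators, so $G_{i_0}$ alone realizes $H$, and it is strongly connected by definition. One subtlety to handle cleanly is the degenerate case where $H$ is a single vertex with no edges (realized, say, by one self-loop) versus $H$ being empty; I would check that the definitions make a single strongly connected component with exactly one simple cycle behave as expected.

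The main obstacle I anticipate is part (1), specifically the chaining argument establishing connectivity of $\phi(G)$. The partition statements in parts (2) and (3) are essentially bookkeeping once one accepts that simple cycles are confined to single strongly connected components. By contrast, proving that \emph{every} pair of simple cycles in a strongly connected $G$ is connected through a path in $\phi(G)$ requires genuinely producing intermediate overlapping cycles. The care needed is in the decomposition step: a closed walk visiting $u$ and $v$ need not itself be simple, and I must verify that it factors into a sequence of simple cycles in which consecutive ones share a vertex, so that no gap appears in the chain. I expect this to be handled by an induction on the length of the connecting closed walk or by appealing to the heap/hike decomposition already available in the formalism, but it is the step that demands the most attention to detail.
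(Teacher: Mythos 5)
Your proposal is correct and follows essentially the same route as the paper: simple cycles are confined to single strongly connected components and cycles in distinct components are vertex-disjoint (part 2), strong connectivity yields a closed walk through the two cycles whose decomposition into consecutively overlapping simple cycles gives the chain in $\phi(G)$ (part 1), and part 3 follows from part 2 plus connectedness of $H$ forcing all but one $\phi(G_i)$ to be empty. If anything, you are more explicit than the paper about the one delicate step — that the connecting closed walk need not be simple and must be verified to factor into a chain of pairwise-intersecting simple cycles — which the paper's proof asserts without detail.
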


\begin{proof}~
	\begin{enumerate}
		\item Let $c$ and $c'$ be two simple cycles of $G$ and $v$ and $v'$ respectively be a vertex in $c$ and a vertex in $c'$. Since $G$ is strongly connected,
		there exists a cycle containing the vertices $v$ and $v'$. Hence there exists a sequence of simple cycles $c_1,\dots, c_k$ such that each pairs of simple
		cycles $\set{c,c_1}, \set{c_1,c_2},\dots, \set{c_{n-1}, c_n}$ and $\set{c_n,c'}$ share a vertex in common. This implies that the vertices in $\phi(G)
		$ corresponding to $c$ and $c'$ are in the same connected component, and hence $\phi(G)$ is connected.
		\item By definition, if there is a simple cycle in $G_1$ then this simple cycle is also in $G$, and if two simple cycles of $G_1$ share a vertex in common,
		then they also share a vertex in common in $G$. Hence we have that $\phi(G_1)\subseteq \phi(G)$. Since this is also true for every $i\in{[n]}$, we have that
		$\bigsqcup_{i\in{[n]}} \phi(G_i) \subseteq \phi(G)$. Let $c$ be a simple cycle of $G$. Since for every pair of vertices $v_1,v_2$ in $c$ there is a path from
		$v_1$ to $v_2$ in $c$, $c$ is a directed subgraph of a connected component $G_i$ of $G$. Let now be $c'$ an other simple cycle of $G$ which share a vertex
		in common with $c$. Then $c'$ is also in $G_i$, since we can go from any vertex of $c$ to every vertex of $c'$ with a path in $c\sqcup c'$ and vice versa. 
		Hence $c$ and $c'$ are both simple cycles of $G_i$ and they also share a vertex in common in $G_i$. This implies that $\phi(G) \subseteq \bigsqcup_{i\in{[n]}}\phi(G_i)$
		and so $\phi(G) = \bigsqcup_{i\in {[n]}} \phi(G_i)$.
		\item Let $G$ be a digraph realising $H$ and  $G_1,\dots, G_n$ be its strongly connected components. By what precedes, we have that $H=\bigsqcup_{i\in {[n]}} \phi(G_i)$.
		Since $H$ is connected, necessarily, there exists $i\in{[n]}$ such that $H=\phi(G_i)$ and $\phi(G_j) = (\emptyset,\emptyset)$ for every $1\leq j\not = i \leq n$.
	\end{enumerate}
\end{proof}
~\\[-1em]

We now present some preliminary examples and observations to give the reader a better idea concerning the two questions and the difficulties they present. 

\bigskip
First of all, not all graphs are realizable or equivalently, not all trace monoids are hike monoids. The smallest example of a graph that is unrealizable is the square
\begin{equation}
\label{E:Hsquare}
\begin{tikzpicture}[Centering]
\def\radi{0.6}
\node[WhiteLabelNode](a) at (135:\radi){\small$a$};
\node[WhiteLabelNode](b) at (225:\radi){\small$d$};
\node[WhiteLabelNode](c) at (315:\radi){\small$c$};
\node[WhiteLabelNode](d) at (45:\radi){\small$b$};
\draw[line width=0.7](a) to (b);
\draw[line width=0.7](b) to (c);
\draw[line width=0.7](c) to (d);
\draw[line width=0.7](d) to (a);
\node[NullNode](H) at (-1,0){\small$H =$};
\end{tikzpicture}
\end{equation}

\noindent Let us give an intuitive proof of this statement while we defer a more general formal proof of it to Proposition~\ref{observations_prop_cycles}. By trying to build a digraph $G$ such that $\phi(G) = H$, we will necessarily construct additional simple cycles. We begin with a simple cycle of arbitrary length corresponding to vertex~$a$ (\textbf{1.} of Figure \ref{F:SquareConstruction}).
We then add the simple cycles corresponding to $b$ and $d$. They must both share at least one vertex with simple cycle $a$ but not with one another (\textbf{2.} of Figure \ref{F:SquareConstruction}).
We finish by adding simple cycle $c$, which must share a vertex with $b$ and $d$ but not with $a$ (\textbf{3.} of Figure \ref{F:SquareConstruction}).
 By doing so we created two new simple cycles, the internal black one and the external gray one (\textbf{3.} of Figure \ref{F:SquareConstruction}).

\begin{figure}[h]
\vspace{-1em}
\begin{center}

 \begin{tikzpicture}
\def\radi{0.4}
\def\alpha{16.5}
\node[NullNode](oa1) at (-\radi,\radi){\small$a$};
\node[NullNode](oa2) at (\radi,\radi){\small$b$};
\node[NullNode](oa3) at (\radi,-\radi){\small$c$};
\node[NullNode](oa4) at (-\radi,-\radi){\small$d$};
\draw[->] (-\radi,\radi)+(\radi,0) arc (0:360:\radi);
\begin{scope}[shift={(4,0)}]
\node[WhiteNode](b1) at (90:\radi){};
\node[WhiteNode](b2) at (180:\radi){};
\node[NullNode](ob1) at (-\radi,\radi){\small$a$};
\node[NullNode](ob2) at (\radi,\radi){\small$b$};
\node[NullNode](ob3) at (\radi,-\radi){\small$c$};
\node[NullNode](ob4) at (-\radi,-\radi){\small$d$};
\draw[->,gray,line width=0.65] (-\radi,\radi)+(\alpha:\radi)  arc (\alpha:270-\alpha:\radi);
\draw[->,line width=0.65] (-\radi,\radi)+(270+\alpha:\radi) arc (-90+\alpha:-\alpha:\radi);
\draw[->] (\radi,\radi)+(180+\alpha:\radi) arc (180+\alpha:540-\alpha:\radi);
\draw[->](-\radi,-\radi)+(90+\alpha:\radi) arc (90+\alpha:450-\alpha:\radi);
\end{scope}
\begin{scope}[shift={(8,0)}]
\node[WhiteNode](c1) at (90:\radi){};
\node[WhiteNode](c2) at (180:\radi){};
\node[WhiteNode](c3) at (270:\radi){};
\node[WhiteNode](c4) at (0:\radi){};
\node[NullNode](oc1) at (-\radi,\radi){\small$a$};
\node[NullNode](oc2) at (\radi,\radi){\small$b$};
\node[NullNode](oc3) at (\radi,-\radi){\small$c$};
\node[NullNode](oc4) at (-\radi,-\radi){\small$d$};
\draw[->,gray,line width=0.65](-\radi,\radi)+(\alpha:\radi)  arc (\alpha:270-\alpha:\radi);
\draw[->,line width=0.65] (-\radi,\radi)+(270+\alpha:\radi) arc (-90+\alpha:-\alpha:\radi);
\draw[->,gray,line width=0.65](\radi,\radi)+(270+\alpha:\radi) arc (270+\alpha:540-\alpha:\radi);
\draw[->,line width=0.65](\radi,\radi)+(180+\alpha:\radi) arc (180+\alpha:270-\alpha:\radi);
\draw[->,gray,line width=0.65](\radi,-\radi)+(180+\alpha:\radi) arc (180+\alpha:450-\alpha:\radi);
\draw[->,line width=0.65](\radi,-\radi)+(90+\alpha:\radi) arc (90+\alpha:180-\alpha:\radi);
\draw[->,gray,line width=0.65](-\radi,-\radi)+(90+\alpha:\radi) arc (90+\alpha:360-\alpha:\radi);
\draw[->](-\radi,-\radi)+(\alpha:\radi) arc (\alpha:90-\alpha:\radi);
\end{scope}
\node[NullNode](l1) at (-1.2,0){\small\textbf{1.}};
\node[NullNode](l1) at (2.8,0){\small\textbf{2.}};
\node[NullNode](l1) at (6.8,0){\small\textbf{3.}};
\end{tikzpicture}
\end{center}
\vspace{-1.5em}
\caption{The square graph of \eqref{E:Hsquare} is unrealizable: \textbf{1.} First consider simple cycle $a$. \textbf{2.} Cycles~$b$ and $d$ must intersect $a$. \textbf{3.} Concluding with the construction of $d$, we obtain $6$ simple cycles: $a$, $b$, $c$ and $d$ plus the internal black simple cycle and the external gray one.}
\label{F:SquareConstruction}
\end{figure}

 A natural approach could be to propose that a graph comprising a unrealizable graph as induced subgraph may itself be unrealizable. This cannot be so: consider for example the following graph $H$ comprising the unrealizable square (highlighted with gray edges) as induced subgraph. It turns out that $H$ is realized by the bidirected  graph shown on the right,
\begin{equation}\label{EqSquare}
\begin{tikzpicture}[line width=0.8,Centering]
\def\radi{1}
\def\radii{0.5*\radi}
\def\alpha{45}
\def\height{1}
\node[WhiteNode](a) at ({\radi*cos(\alpha)},{\radii*sin(\alpha)}){};
\node[WhiteNode](b) at ({\radi*cos(\alpha+90)},{\radii*sin(\alpha+90)}){};circle(0.1);
\node[WhiteNode](c) at ({\radi*cos(\alpha+180)},{\radii*sin(\alpha+180)}){};circle(0.1);
\node[WhiteNode](d) at ({\radi*cos(\alpha+270)},{\radii*sin(\alpha+270)}){};circle(0.1);
\node[WhiteNode](e) at (0,\height){};
\node[WhiteNode](f) at (0,-\height){};
\draw[color=lightgray,line width=2](a) to (b);
\draw[color=lightgray,line width=2](b) to (c);
\draw[color=lightgray,line width=2](c) to (d);
\draw[color=lightgray,line width=2](d) to (a);
\draw(a) to (e);
\draw(b) to (e);
\draw(c) to (e);
\draw(d) to (e);
\draw(a) to (f);
\draw(b) to (f);
\draw(c) to (f);
\draw(d) to (f);
\draw(e) to (f);
\node[NullNode](l) at (-1.3,0){$H=$};
\begin{scope}[shift={(5,0)}]
\def\radii{0.6*\radi}
\node[WhiteNode](a) at (-\radii,-\radii){}; 
\node[WhiteNode](b) at (\radii,-\radii){}; 
\node[WhiteNode](c) at (\radii,\radii){}; 
\node[WhiteNode](d) at (-\radii,\radii){}; 
\node[NullNode](l) at (1.7,0){$\in\phi^{-1}(H)$};
\path[->](a) edge[bend right=20] (b);
\path[->](b) edge[bend right=20] (c);
\path[->](c) edge[bend right=20] (d);
\path[->](d) edge[bend right=20] (a);
\path[->](a) edge[bend right=20] (d);
\path[->](b) edge[bend right=20] (a);
\path[->](c) edge[bend right=20] (b);
\path[->](d) edge[bend right=20] (c);
\end{scope}
\end{tikzpicture}
\end{equation}
We will generalize this observation later, in Remark~\ref{remrealizsub}.\\[-.5em] 

This suggests that the problem with realizability lies in part with {cycles made of simple cycles}, which must themselves be represented as vertices in $H$. We formalize this intuition and elaborate on it with the following proposition.

\begin{proposition}\label{observations_prop_cycles}
	Let $H=(V_H,E_H)$ be a realizable graph. Then for any induced simple cycle $C=(c_1,\dots, c_n)$ of length at least 4 in $H$, there exist two vertices
	$w_1,w_2\in \modifT{H}{V_H}$ such that: 
	\begin{itemize}
	    \item for all vertices $c_i$ in $C$, edges $\set{w_1,c_i}$ and $\set{w_2,c_i}$ are both in $E_H$.
	    \item the neighbourhoods of $w_1$ and $w_2$ are included in the neighbourhood of $C$. That is to say if $\set{v,w_1}\in E_H$ or $\set{v,w_2}\in E_H$
	    then either $v$ is in $C$ or there exists $c_i$ in $C$ such that $\set{v,c_i}\in \modifT{E}{E_H}$.
	\end{itemize}
	Furthermore, if edge $\set{w_1,w_2}$ is not in $H$ then for any $i\in {[n]}$ there exists a vertex $c_i'$ of $H$ such that edges $\set{c_i,c_i'}$ and
	$\set{c_{i+1}, c_i'}$ are in $E_H$, under the convention 
	$c_{n+1}=c_1$.
\end{proposition}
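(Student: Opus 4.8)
The plan is to fix a digraph $G$ with $\phi(G)=H$ and read all hypotheses on $C$ geometrically inside $G$. The vertices $c_1,\dots,c_n$ are then simple cycles of $G$, consecutive ones (including the pair $c_n,c_1$) share at least one vertex, and non-consecutive ones are vertex-disjoint because $C$ is induced. Writing $S_i:=V(c_i)\cap V(c_{i+1})$ with indices modulo $n$ and $c_{n+1}=c_1$, each $S_i$ is nonempty, and the first thing I would record is that the hypothesis $n\ge 4$ is exactly what forces the $S_i$ to be pairwise disjoint and every vertex of $U:=\bigcup_i c_i$ to lie in at most two of the $c_i$: a vertex in $S_i\cap S_{i+1}$ would lie in $V(c_i)\cap V(c_{i+2})$, which is empty for $n\ge 4$, and similarly for farther-apart indices. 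This disjointness is the structural feature that makes everything work and is precisely what fails for a triangle.

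Next I would build the two required cycles by going around the ring in the two possible directions. Choose $s_i\in S_i$; since each $c_i$ is a directed simple cycle through $s_{i-1}$ and $s_i$, it is the concatenation of a directed path $P_i$ from $s_{i-1}$ to $s_i$ with the complementary directed path $Q_i$ from $s_i$ to $s_{i-1}$. Then $W:=P_1P_2\cdots P_n$ is a closed directed walk running once around the ring in the direction of increasing index, and $W':=Q_nQ_{n-1}\cdots Q_1$ runs once around the other way. To extract genuine simple cycles from these walks I would introduce a winding number: because every vertex of $U$ lies in at most two consecutive cycles, there is a natural map from $U$ onto the topological circle obtained from the abstract $n$-gon on nodes $1,\dots,n$ (a vertex in $c_i$ alone maps to node $i$, a vertex of $S_i$ to the midpoint of edge $\{i,i+1\}$), and the degree of this map restricted to a closed walk is a winding number that is additive when a closed walk is decomposed into simple cycles.

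By construction $W$ has winding $+1$ and $W'$ winding $-1$, so decomposing each into simple cycles yields a simple cycle $w_1\subseteq U$ of positive winding and a simple cycle $w_2\subseteq U$ of negative winding; in particular $w_1\neq w_2$. The key point, which I would prove next, is that a simple cycle of nonzero winding meets every $c_j$: if it were disjoint from $c_j$ it would avoid $S_{j-1}$ and $S_j$ as well, hence map into the circle with node $j$ removed, which is contractible and forces winding $0$. Thus $w_1,w_2$ are simple cycles of $G$, i.e.\ vertices of $H$, each sharing a vertex with every $c_i$; moreover each differs from every $c_k$, since a member of $C$ cannot meet all of $C$ for $n\ge 4$, so the edges $\{w_1,c_i\}$ and $\{w_2,c_i\}$ genuinely lie in $E_H$. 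The neighbourhood condition then falls out: any simple cycle $v$ with $\{v,w_1\}\in E_H$ or $\{v,w_2\}\in E_H$ shares a vertex lying in some $c_i$ with $w_1\subseteq U$ or $w_2\subseteq U$, so either $v\in C$ or $\{v,c_i\}\in E_H$. I expect the main obstacle to be making this extraction fully rigorous, namely defining the winding number, proving its additivity under cycle decomposition, and the implication ``nonzero winding $\Rightarrow$ meets every $c_i$''; the pairwise disjointness of the $S_i$ and the at-most-two-cycles-per-vertex property are what let the map to the circle be defined and carry this through.

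Finally, the \emph{furthermore} clause follows formally from the first part. Suppose $\{w_1,w_2\}\notin E_H$, so $w_1,w_2$ are vertex-disjoint in $G$ and in particular distinct. Fix $i\in[n]$. Since $c_i$ and $c_{i+1}$ are consecutive on $C$ they are adjacent in $H$, hence share a vertex and are not disjoint; therefore $\{w_1,w_2\}\neq\{c_i,c_{i+1}\}$, and as $w_1\neq w_2$ at least one of them, call it $c_i'$, differs from both $c_i$ and $c_{i+1}$. By the first part $c_i'$ meets both $c_i$ and $c_{i+1}$, so $\{c_i,c_i'\}$ and $\{c_{i+1},c_i'\}$ lie in $E_H$, as required. (The disjointness hypothesis serves only to exclude the single configuration $\{w_1,w_2\}=\{c_i,c_{i+1}\}$; in fact the cycles produced above already satisfy $w_1,w_2\notin C$, so $c_i':=w_1$ works regardless, but I would present the argument respecting the stated hypothesis.)
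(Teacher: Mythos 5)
Your argument is correct, and for the main claim it takes a genuinely different route from the paper. The paper never forms a non-simple walk: it picks $v_1,v_2$ on $c_1$ (with $v_1$ also on $c_n$, $v_2$ also on $c_2$, and the connecting arc of $c_1$ avoiding both), then recursively lets $v_i$ be the \emph{first} vertex of $c_i$ encountered when walking along $c_{i-1}$ from $v_{i-1}$, and proves combinatorially --- using chord-freeness of $C$ exactly where you use disjointness of non-consecutive cycles --- that the concatenation $v_1\rightarrow_1 v_2\rightarrow_2\cdots\rightarrow_n v_1$ is \emph{already} simple; $w_2$ is built the same way in the opposite cyclic order, and distinctness follows from the reversed order of traversal. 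You instead concatenate arbitrary connecting paths and extract simple cycles by a winding-number argument. The steps you flag as needing rigor do all go through: assign to each edge of $c_i$ the geodesic arc inside the closed length-one arc around node $i$ (unambiguous even for an edge shared by $c_i$ and $c_{i+1}$, since such an edge has both endpoints in $S_i$ and either convention gives the constant arc); additivity holds because the standard decomposition of a closed directed walk into simple directed cycles preserves the edge multiset, and winding is total signed arc length over circumference; each $P_i$ lifts with net displacement $+1$, giving $W$ winding $+1$; and a simple cycle disjoint from $c_j$ uses no edge of $c_j$, so its image avoids node $j$ and winds zero. Your extraction lemma is choice-free and arguably more transparent than the paper's ``first vertex'' bookkeeping; the cost is the topological apparatus, which the paper's elementary argument avoids entirely.

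On the \emph{furthermore} clause your observation is sharp and, against the literal statement, valid: since $w_1$ is adjacent in $H$ to every $c_i$, taking $c_i':=w_1$ satisfies the conclusion whether or not $\set{w_1,w_2}\in E_H$, so the clause as written is redundant. Be aware, though, that the paper's proof delivers strictly more than the statement asks: assuming $w_1,w_2$ disjoint, it obtains distinct vertices $v_i\in w_1$ and $v_i'\in w_2$ common to $c_{i-1}$ and $c_i$, and builds $c_i'$ out of pieces of these two cycles only, so that $V(c_i')\subseteq V(c_{i-1})\cup V(c_i)$. This ``local'' common neighbour is automatically distinct from the ring-traversing cycles $w_1,w_2$, which for $n\geq 4$ contain vertices outside any two consecutive $c_j$'s (for your $w_1,w_2$ this also follows from the winding argument: a cycle supported on two consecutive $c_j$'s maps into a contractible arc and winds zero). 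That localized cycle is plainly what the hypothesis $\set{w_1,w_2}\notin E_H$ is meant to produce; your shortcut verifies the proposition as stated but does not recover it, and this discrepancy between the statement and the paper's actual proof is worth flagging rather than silently trivializing the clause.
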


\begin{proof}
    Let $C=(c_1,\dots, c_n)$ be an induced simple cycle of $H$ of length $n\geq 4$ and let $G=(V_G,E_G)$ be a digraph realizing $H$. Let $v_1, v_2\in V_G$ be two vertices 
    visited by $c_1$ such that $v_1$ is also visited by $c_n$, $v_2$ is also visited by $c_2$ and every other vertex in the path from $v_1$ to $v_2$ along $c_1$ is neither visited by $c_n$ nor by $c_2$. For $i=3$ up to $n$, we recursively define $v_i$ as the first vertex in $c_i$ we encounter when walking along cycle $c_{i-1}$ from $v_{i-1}$. For $i\in{[n]}$ and $v,v'\in c_i$, 
    let us denote by $v\rightarrow_{i} v'$ the path from $v$ to $v'$ along $c_i$ (if $v=v'$ then the path has no edge) and let us show that the rooted closed walk 
    $w_1=v_1\rightarrow_1 v_2\rightarrow_2 v_3 \dots v_n \rightarrow_n v_1$ is a rooted simple cycle. 
    
    First, $w_1$ comprises at least one edge since 
    $n\geq 4$ and $C$ being an induced cycle entails $v_1\not = v_3$. 
    Second, suppose that $w_1$ is not simple and let $v$ be a vertex visited at least twice by $w$. Let $i\in{[n]}$ (under the convention $v_{0}=v_{n}$ and $v_1=v_{n+1}$) be such that $v$ occurs in $v_i\rightarrow_i v_{i+1}$, meaning in particular that $v$ is visited by $c_i$.
    Since $C$ is an 
    induced cycle of $H$, any other appearance of $v$ must be in $v_{j}\rightarrow_{j} v_{j+1}$ with $j=i-1$ or $j=i+1$, as otherwise $\{c_j,c_i\}$ would be a chord of $C$. Assuming $j=i-1$ without loss of
    generality, $v$ is a vertex visited by $c_i$ which also occurs in $v_{i-1}\rightarrow_{i-1} v_i$. But if $i\not = 1$, by construction, $v_i$ is the only vertex of $v_{i-1}\rightarrow_{i-1} v_i$
    which is also visited by $c_i$, implying $v=v_i$ is the only occurance of $v$, contradicting our supposition that $w$ is not simple. If $i=1$, then again by construction $v_1$ is the only vertex of
    $v_1\rightarrow_1 v_2$ which is also visited by $c_n$, hence $v=v_1$ and we have the same conclusion. Consequently, $w_1$ describes 
    a simple cycle satisfying the two claims of the Proposition. 
    
    Let $v_1', v_n'\in V_G$ be two vertices visited by $c_n$ such that $v_1'$ is also visited by $c_1$, $v_n'$ is also visited by $c_{n-1}$ and every other vertex in the path from $v_1$ 
    to $v_n$ along $c_n$ is neither visited by $c_1$ nor by $c_{n-1}$. For $i=n-1$ down to $2$, we recursively define $v_i'$ as the first vertex visited by $c_{i-1}$ we encounter when walking along cycle $c_i$ from $v_{i+1}'$. By the same reasoning as for $w_1$, walk $w_2=v_1'\rightarrow_n v_n'\rightarrow_{n-1} v_{n-1}' \dots v_2' \rightarrow_1 v_1'$ is a simple cycle satisfying the 
    two claims of the Proposition. Furthermore, $w_2$ crosses cycles $c_1,\dots c_n$ in reverse order as compared to $w_1$ and so is distinct from it.
    
    We now turn to the situation where edge $\set{w_1,w_2}$ is not in $E_H$, i.e., $w_1$ and $w_2$ have no vertex in common in $G$. Let $i\in{[n]}$. By construction, vertices $v_i$ and $v_i'$
    are both in $c_i$ and $c_{i-1}$ and are distinct since $v_i$ is in $w_1$ and $v_i'$ is in $w_2$. Let us consider cycle 
    $\mathfrak{c}:=v_i\rightarrow_{i-1}v_i'\rightarrow_i v_i$ and show that $v_{i-1}$ can not be visited by it. First, $v_{i-1}$ can not be visited by path $v_i'\rightarrow_i v_i$ since it is in $c_{i-2}$, which
    is disjoint from $c_i$. It can also not be in path $v_i\rightarrow_{i-1}v_i'$ as otherwise $v_i'$ would be in $v_{i-1}\rightarrow_{i-1} v_i$, which is a part of $w_1$. Then $v_{i-1}$ is not visited by $\mathfrak{c}$. By the
    same reasoning we find that $v_{i+1}'$ is not visited by $\mathfrak{c}$ either. As a consequence $\mathfrak{c}$ is distinct from $c_{i-1}$ and $c_i$ and shares at least one vertex with each of them. If $\mathfrak{c}$ is simple
    then we choose $c_i':=\mathfrak{c}$ equal to this cycle, else we choose $c_i'$ to be a simple cycle composing $\mathfrak{c}$ and visiting $v_i$.
\end{proof}
~

Triangles themselves are no obstacle to realizability as any triangle is realizable by a bouquet of three self-loops on the same vertex. This does not entail that any chordal graph is realizable however. For example, the triforce graph: 
\vspace{-2mm}
\begin{equation*}
\begin{tikzpicture}[line width=1,Centering]
\def\radi{0.6}
\node[WhiteNode](a) at (30:\radi){};
\node[WhiteNode](b) at (150:\radi){};
\node[WhiteNode](c) at (270:\radi){};
\node[WhiteNode](d) at (90:2*\radi){};
\node[WhiteNode](e) at (210:2*\radi){};
\node[WhiteNode](f) at (330:2*\radi){};
\draw(a) to (b);
\draw(b) to (c);
\draw(c) to (a);
\draw(a) to (d);
\draw(d) to (b);
\draw(b) to (e);
\draw(e) to (c);
\draw(c) to (f);
\draw(f) to (a);
\end{tikzpicture}
\end{equation*}
 \noindent is unrealizable for the same reasons barring the square from being realized. In fact, this graph can be seen as the particular case of Proposition~\ref{observations_prop_cycles}
 for cycles of length $3$. Indeed, the reader may attempt to build a digraph with a triforce dependency graph to see that because of the $3$ external vertices, the cycles composing the
 central triangle necessarily imply the existence of two further cycles sharing vertices with every other cycle hence a pair of vertices connected to all vertices of the triforce, contradicting its structure.
 
All observations made so far point to realizability as depending solely on the induced cycles of $H$ and their neighborhood, with the need to distinguish long (length $\ell \geq 4$) from short ones (exception of the triforce graph). This is in fact 
not true; for example we prove later in {\ref{AppA}} that the following graph is unrealizable yet possesses neither an induced cycle of length $\ell \geq 4$ nor a triforce of triangles:
\begin{equation*}
\begin{tikzpicture}[Centering,line width=1]
\def\radi{1.2}
\def\radii{0.6}
\def\alpha{40}
\node[WhiteNode](c) at (0,0){};
\node[WhiteNode](b1) at (\alpha:\radii){};
\node[WhiteNode](b2) at (120+\alpha:\radii){};
\node[WhiteNode](b3) at (240+\alpha:\radii){};
\node[WhiteNode](a1) at (0:\radi){};
\node[WhiteNode](a2) at (120:\radi){};
\node[WhiteNode](a3) at (240:\radi){};
\draw[-](c) to (a1);
\draw[-](c) to (a2);
\draw[-](c) to (a3);
\draw[-](c) to (b1);
\draw[-](c) to (b2);
\draw[-](c) to (b3);
\draw[-](a1) to (b1);
\draw[-](a2) to (b2);
\draw[-](a3) to (b3);
\path[-](a1) edge[bend right=30] (a2);
\path[-](a2) edge[bend right=30] (a3);
\path[-](a3) edge[bend right=30] (a1);
\end{tikzpicture}
\end{equation*}

\section{Surjectivity: which graphs are realizable?}\label{Surjec}

As shown by the preliminary examples and results of Section~\ref{section_observations}, understanding the image of $\phi$ is more complicated than it may first appear. Ideally, one would like to have a criterion for deciding if a graph is realizable, i.e., if it is in the image of $\phi$.  Such a criterion remains elusive and if it exists, it must be highly non-trivial as we will demonstrate. Nonetheless we here establish that realizability is \emph{decidable} by providing an 
algebraic condition that is equivalent to it. While this algebraization does not in itself shed additional light on realizability  it leads to an algorithm for systematically checking for it.\\[-.5em] 

Given a digraph $G=(V,E)$ and $H=\phi(G)$ its hike dependency graph we have, by definition, that the vertices of $H$ correspond to the simple cycles of $G$. Conversely we may ask under what form do the vertices of $G$ manifest themselves in $H$? 
Given $v$ a vertex of $G$, all simple cycles of $G$ visiting $v$ share at least this vertex hence do not commute in $\mathcal{H}_G$. This implies that they are all mutually connected by edges in $H$, i.e.\modifJ{}{,} they form a clique $\kappa_v$. The fact that vertices of $G$ {yield} cliques in $H$ leads to the following observations:
\begin{enumerate}[label=\textit{\roman*})]
	\item\label{surjectivity_obs1} The set $\set{\kappa_v}_{v\in V}$ is a {\em clique cover} of $H$, that is each clique $\kappa_v$ is a subgraph of $H$ and every 
	edge and vertex of $H$ appears in at last one $\kappa_v$.
	\item\label{surjectivity_obs2} For $W\subseteq V$, the set
	$\bigcap_{v\in W} \kappa_v \setminus \bigcup_{v\in V\setminus W} \kappa_v$ 
	corresponds to the simple cycles of $G$ with exactly $W$ as vertex set.
\end{enumerate}

Let us now give a criterion equivalent to realizability. For $S$ a set, we denote by $\Cyc_S$ the set of permutations over $S$ with cycle decomposition of
length $1$. 

\begin{theorem}\label{surjectivity_eq_condition}
	Let $H$ be a graph. Then $H$ is realizable if and only if there exists a clique cover $\set{\kappa_1,\dots,\kappa_n}$ of $H$ such that the following polynomial 
	system in variables $(m_{ij})_{i,j\in{[n]}}$ admits an integer solution:
	\begin{equation}\label{surjectivity_pol_system}
		\forall\, W\subseteq{[n]},\quad \sum_{\sigma\in\Cyc_W} \prod_{v\in W} m_{v,\sigma(v)} = \big|\mathcal{K}_W\big|,
	\end{equation}
	where $\mathcal{K}_W:=\bigcap_{v\in W} \kappa_v \setminus \bigcup_{v\in {[n]}\setminus W} \kappa_v$. 
	
	In this case, $H$ is realized by the digraph $G$ with vertex set ${[n]}$ and adjacency matrix $\mat{A}$ defined by $\mat{A}_{i,j} := m_{i,j}$ for
	$1\leq i,j\leq n$.
\end{theorem}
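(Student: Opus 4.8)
The plan is to establish the equivalence by constructing explicit dictionaries in both directions between realizing digraphs and integer solutions of the system~\eqref{surjectivity_pol_system}, with the clique cover serving as the bridge between the two worlds. The central identification is the following: if $G$ realizes $H$, then by observation~\ref{surjectivity_obs1} the cliques $\kappa_v$ indexed by vertices $v\in V(G)$ form a clique cover of $H$, and by observation~\ref{surjectivity_obs2} the set $\mathcal{K}_W$ enumerates exactly the simple cycles of $G$ whose vertex set is precisely $W\subseteq V(G)$. So the right-hand side $|\mathcal{K}_W|$ counts the simple cycles of $G$ supported on $W$. The heart of the argument is therefore to show that the left-hand side $\sum_{\sigma\in\Cyc_W}\prod_{v\in W} m_{v,\sigma(v)}$, with $m_{ij}:=\mat{A}_{ij}$ the multiplicity of arcs from $i$ to $j$, also counts precisely these simple cycles.

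First I would unpack the combinatorial meaning of the left-hand sum. A permutation $\sigma$ of $W$ whose cycle decomposition consists of a single cycle (this is what $\Cyc_W$ denotes, a cyclic permutation on all of $W$) corresponds bijectively to a cyclic ordering of the vertices of $W$, i.e.\ to the combinatorial skeleton of a simple cycle visiting exactly the vertices in $W$. The weight $\prod_{v\in W} m_{v,\sigma(v)}$ then counts, via the multiplicities of the multidigraph, the number of ways to realize that cyclic ordering as an actual rooted simple cycle in $G$ by choosing one arc $(v,\sigma(v))_k$ at each step. Summing over all $\sigma\in\Cyc_W$ thus tallies every simple cycle of $G$ with vertex set exactly $W$, each counted once (recall that simple cycles are unrooted, considered up to cyclic permutation, and the single-cycle condition together with the product over all of $W$ forces every vertex of $W$ to be visited exactly once). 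Matching this count against $|\mathcal{K}_W|$ for every $W$ is exactly the system~\eqref{surjectivity_pol_system}, so the existence of a realizing $G$ forces an integer solution for the cover induced by $G$.

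For the converse I would reverse the construction: given a clique cover $\set{\kappa_1,\dots,\kappa_n}$ and an integer solution $(m_{ij})$, build the multidigraph $G$ on vertex set $[n]$ with $\mat{A}_{ij}:=m_{ij}$ arcs from $i$ to $j$. The equations~\eqref{surjectivity_pol_system} guarantee that for each $W$ the number of simple cycles of $G$ supported exactly on $W$ equals $|\mathcal{K}_W|$, and I must then verify that $\phi(G)=H$, i.e.\ that the simple cycles of $G$ and their intersection pattern reproduce $H$ exactly. The vertices match because $\{\mathcal{K}_W\}_W$ partitions $V(H)$ (the cover being exact) while $\{$simple cycles of $G$ supported on $W\}_W$ partitions the simple cycles of $G$, and the equations equate the block sizes, so one can fix a bijection block by block. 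The adjacency then matches because two simple cycles of $G$ share a vertex if and only if their vertex sets $W,W'$ intersect, and the cover structure $\kappa_v$ is precisely designed so that the corresponding vertices of $H$ are adjacent exactly under this condition.

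The main obstacle I anticipate is the converse direction, specifically ensuring that the edges of $H$ are faithfully reproduced and not merely the vertex counts. Equating $|\mathcal{K}_W|$ with the cycle count pins down how many simple cycles sit on each vertex subset, but one must argue that the adjacency relation in $H$ agrees with vertex-set intersection of the corresponding cycles in $G$, which requires carefully exploiting that $\set{\kappa_1,\dots,\kappa_n}$ is a genuine clique cover: each edge of $H$ lies in some $\kappa_v$, meaning the two incident vertices correspond to cycles both passing through the common vertex $v$ of $G$, hence sharing that vertex; conversely nonadjacent vertices of $H$ lie in no common clique, so their cycles share no vertex of $G$. Making this correspondence watertight—particularly checking that the definition of $\mathcal{K}_W$ via the clique cover aligns exactly with cycle supports in the freshly built $G$—is where the delicate bookkeeping lies, and I would devote the bulk of the write-up to it.
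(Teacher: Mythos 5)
Your proposal is correct and follows essentially the same route as the paper's proof: the forward direction interprets $\sum_{\sigma\in\Cyc_W}\prod_{v\in W} m_{v,\sigma(v)}$ as counting simple cycles with support exactly $W$ (a cyclic order on $W$ plus a choice of arc at each step), and the converse builds $G$ from the integer solution and identifies $H$ with $\phi(G)$ via block-wise bijections between $\mathcal{K}_W$ and the cycles of $G$ supported on $W$. The adjacency verification you single out as the delicate step is handled in the paper exactly as you sketch, through the equivalence between adjacency and common membership in a clique $\kappa_v$, so no changes are needed.
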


{\begin{corollary}\label{realizability_decidable}
    Realizability is decidable.
\end{corollary}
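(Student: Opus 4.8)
The plan is to deduce decidability directly from Theorem~\ref{surjectivity_eq_condition}, which reduces realizability of $H$ to the existence of a clique cover $\set{\kappa_1,\dots,\kappa_n}$ of $H$ for which the polynomial system \eqref{surjectivity_pol_system} admits a solution in nonnegative integers (nonnegative because the $m_{ij}$ are edge multiplicities of a digraph). First I would observe that $H$ being finite, it has only finitely many cliques and hence only finitely many clique covers, all of which can be effectively enumerated: one lists the subsets of $V_H$, keeps those inducing a complete subgraph, and then lists the finitely many families of such cliques covering every vertex and edge of $H$. For each such cover the quantities $|\mathcal{K}_W|$ are explicitly computable from $H$. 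It therefore suffices to exhibit, for a \emph{fixed} clique cover, an algorithm deciding whether the resulting system \eqref{surjectivity_pol_system} has a nonnegative integer solution.

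Here lies the only real difficulty. The system \eqref{surjectivity_pol_system} is a system of polynomial Diophantine equations, and by the negative answer to Hilbert's tenth problem the solvability of an arbitrary such system is undecidable; the reduction furnished by Theorem~\ref{surjectivity_eq_condition} is thus not by itself sufficient. The key is to exploit the special shape of \eqref{surjectivity_pol_system}: every coefficient on the left-hand side equals $1$, the unknowns range over nonnegative integers, and the right-hand sides are fixed nonnegative integers whose sum $\sum_{W\subseteq[n]}|\mathcal{K}_W|$ equals the number $N:=|V_H|$ of simple cycles of any realizing digraph (equivalently, of vertices of $H$), since the sets $\mathcal{K}_W$ partition these simple cycles by their vertex set. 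I would then prove that solvability is always witnessed by a bounded solution, reducing the test to a finite search. Concretely, given any nonnegative integer solution $(m_{ij})$, fix $W$: its equation is a sum of nonnegative monomials equal to $|\mathcal{K}_W|\le N$, so every monomial $\prod_{v\in W} m_{v,\sigma(v)}$ is at most $N$, whence each factor occurring in a \emph{positive} monomial is at most $N$. Call $m_{ij}$ \emph{active} if it occurs in at least one monomial that is positive in the solution and \emph{inactive} otherwise. Each active variable is then bounded by $N$. Each inactive variable occurs only in monomials that already vanish (each such monomial carries a zero factor), so the left-hand side of every equation is independent of it; setting all inactive variables to $0$ simultaneously leaves every monomial, hence every equation, unchanged and yields a solution with all entries in $\set{0,1,\dots,N}$.

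Consequently, for each fixed clique cover it suffices to test the finitely many tuples $(m_{ij})_{i,j\in[n]}\in\set{0,1,\dots,N}^{n^2}$ against \eqref{surjectivity_pol_system}, a terminating procedure; ranging over the finitely many clique covers then gives an algorithm that halts and correctly reports whether $H$ is realizable, establishing the corollary. I expect the main obstacle to be precisely this passage around Hilbert's tenth problem: the substance of the argument is not the enumeration of clique covers but the a~priori bound on solutions that turns an undecidable-looking Diophantine feasibility question into a finite search, together with the care needed to verify that eliminating the inactive variables genuinely preserves solutions of \eqref{surjectivity_pol_system}.
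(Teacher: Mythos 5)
Your proof is correct, and it is in fact more careful than the paper's own treatment. The paper gives no proof of this corollary at all: decidability is presented as an immediate consequence of Theorem~\ref{surjectivity_eq_condition}, and the algorithm is later summarized as working ``through all possible clique covers of a graph $H$, verifying for each such cover if the accompanying system~\eqref{surjectivity_pol_system} admits an integer solution.'' That summary silently assumes the solvability of each system is effectively testable, which, as you rightly observe in view of the negative solution to Hilbert's tenth problem, does not follow merely from \eqref{surjectivity_pol_system} being a finite Diophantine system; an argument specific to these systems is needed, and the paper never supplies one. Your contribution is precisely that missing lemma: since the sets $\mathcal{K}_W$ partition $V_H$, every right-hand side is at most $N=\card{V_H}$; in a nonnegative solution every factor of a positive monomial is therefore at most $N$, and your active/inactive truncation (zeroing every variable that occurs only in vanishing monomials, each of which retains a zero factor among the remaining active variables, so no monomial changes value) yields a solution in $\set{0,\dots,N}^{n^2}$, reducing the test to a finite search over the finitely many clique covers. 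Your reading of ``integer solution'' as ``nonnegative integer solution'' is also the correct interpretation of the theorem, since its converse direction builds an adjacency matrix from the solution. For comparison, an alternative route to decidability is implicit in Section~4 of the paper, where the ear-decomposition argument shows that a realizable $H$ admits a realizer with $\card{V_G}\leq\card{V_H}-1$ and $\card{E_G}\leq 2\card{V_H}-2$, so one could instead exhaustively enumerate all multidigraphs of bounded size and compute $\phi$ of each; your argument has the advantage of staying entirely within the clique-cover framework to which the corollary is attached, and of making the search space explicit.
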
}

\begin{proof}[\modifJ{}{Proof of Theorem~\ref{surjectivity_eq_condition}}]

	Let us first suppose that $H$ is realizable and let $G$ be a digraph realizing $H$ with vertex set ${[n]}$ and adjacency matrix $\mat{A}$. From 
	observation \ref{surjectivity_obs1} above, set $\set{\kappa_v}_{v\in{[n]}}$ is a clique cover of $H$. Furthermore, from  \ref{surjectivity_obs2} it follows that for any given subset $W\subseteq {[n]}$, the number of cycles with vertex set $W$ is equal to $\card{\mathcal{K}_W}$. Now 
	remark that this number is also equal to $\sum_{\sigma\in\Cyc_W} \prod_{v\in W} \mat{A}_{v,\sigma(v)}$. Indeed, to choose a simple cycle with vertex 
	set $W$ in $G$, we must first choose in which cyclic order we visit the vertices in $W$, that is a permutation in $\Cyc_W$, and then, for every vertex 
	$v\in W$ choose which edge to follow from $v$ to $\sigma(v)$ among the $\mat{A}_{v,\sigma(v)}$ possibilities. Hence system (\ref{surjectivity_pol_system}) 
	admits $m_{i,j} = \mat{A}_{i,j}$ as an integer solution.

	Let us now suppose that system (\ref{surjectivity_pol_system}) admits an integer solution and let $G$ be the digraph with vertex set ${[n]}$ and adjacency 
	matrix $\mat{A}$ such that $\mat{A}_{i,j} = m_{i,j}$ for $1\leq i,j\leq n$. Let $H':=\phi(G)$ be the hike dependency graph of $G$ and for every $v\in {[n]}$ let $\kappa_v'$ 
	be the clique in $H'$ corresponding to the simple cycles of $G$ visiting $v$.
	As for $\mathcal{K}_W$, we define $\mathcal{K'}_W$ to be $\bigcap_{v\in W} \kappa_v \setminus \bigcup_{v\in {[n]}\setminus W} \kappa_v$.
	Then from what precedes, we know that for any $W\subseteq {[n]}$, the number 
	of vertices in $\mathcal{K'}_W$ is equal to $\sum_{\sigma\in\Cyc_W} \prod_{v\in W} \mat{A}_{v,\sigma(v)}$. But since 
	$(\mat{A}_{i,j})_{i,j\in{[n]}} = (m_{i,j})_{i,j\in{[n]}}$ is a solution of (\ref{surjectivity_pol_system}), this is also equal to the number of vertices in 
	$\mathcal{K}_W$. Consequently, there is a bijection $\psi_W$ between these two sets $\mathcal{K}_W$ and $\mathcal{K'}_W$ for every $W\subseteq {[n]}$. 
	In addition, because 
	$\left(\mathcal{K'}_W\right)_{W\subseteq{[n]}}$ and $\left(\mathcal{K}_W\right)_{W\subseteq{[n]}}$ are partitions of the vertex sets of $H'$ and of $H$, respectively, there exists a global bijection $\psi$ between the vertices of $H'$ and of $H$. 
	In the same way, since for any $u\in {[n]}$ the sequences
	$\left(\mathcal{K'}_W\right)_{u\in W\subseteq{[n]}}$ and $\left(\mathcal{K}_W\right)_{u\in W\subseteq{[n]}}$ are  partitions
	of $\kappa_u'$ and of $\kappa_u$, respectively, $\psi$ realizes a bijection between $\kappa_u'$ and $\kappa_u$. This makes $\psi$ into an isomorphism of graphs. Indeed, let be $v_1$ and $v_2$ be two 
	vertices of~$H$, then we have
	\begin{equation*}\begin{split}
		\text{edge $\set{v_1,v_2}$ is in $H'$} &\iff \exists\, v\in{[n]}, v_1,v_2\in \kappa_v' \\
		&\iff  \exists\, v\in{[n]}, \psi(v_1),\psi(v_2)\in \kappa_v \\
		&\iff \text{edge $\set{\psi(v_1),\psi(v_2)}$ is in $H$}.
	\end{split}\end{equation*}
	Whence $H$ is realizable and $G$ realizes $H$.
\end{proof}

Before giving examples of {Theorem~\ref{surjectivity_eq_condition}}, let us mention that we can ignore clique covers such that $\kappa_i\subseteq \kappa_j$ for some $i\neq j$. Indeed, if for such a clique cover
the system (\ref{surjectivity_pol_system}) admits an integer solution, then no cycles would pass by the vertex $i$ in the associated digraph, since we have 
$|\mathcal{K}_W| = 0$ for any $W$ containing $i$. Hence the digraph obtained by removing vertex $i$ still realizes $H$ and corresponds to the clique cover obtained
by removing the clique $\kappa_i$. In the sequel, we call trivial any clique $\kappa_i$ such that $\kappa_i\subseteq \kappa_j$ for some $j\neq i$.\\

Let us now illustrate the theorem with three concrete cases. 
\begin{examples}\label{surjectivity_ex1}
~
    \begin{itemize}
	\item Let $H$ be the square over $\set{a,b,c,d}$ as in  (\ref{E:Hsquare}).

	The only clique cover of $H$ with no trivial clique is $\kappa_1=\set{a,b}$, $\kappa_2=\set{b,c}$, $\kappa_3=\set{c,d}$ and $\kappa_4=\set{d,a}$. 
    System \eqref{surjectivity_pol_system} comprises an equation for each subset $W$ of $\{1,2,3,4\}$.
    Firstly, considering $W=\emptyset$ we get the trivial equation $0=0$. Secondly, for $W=\set{1}$, we find
    \[
    m_{1,1}=\left|\kappa_1\setminus (\kappa_2\cup\kappa_3\cup\kappa_4)\right|=\left|\set{a,b}\setminus\set{a,b,c,d}\right|=0,
    \]
    and repeating the same reasoning for $W=\set{2},\set{3}$ and $\set{4}$ we obtain
    \begin{equation*}
    m_{1,1}=m_{2,2}=m_{3,3}=m_{4,4}=0.
    \end{equation*}
    Thirdly, for $W=\set{1,2}$, we find $m_{1,2} m_{2,1}=\left|(\kappa_1\cap\kappa_2)\setminus(\kappa_3\cup\kappa_4)\right|=\left| \set{b}\setminus\set{d}\right|=1$.
Similarly $W=\set{2,3},\set{3,4}$ and $\set{1,4}$ yield
\begin{equation}
\label{E:Thm:Ex1-1}
    m_{1,2}m_{2,1}=m_{2,3}m_{3,2}=m_{3,4}m_{4,3}=m_{1,4}m_{4,1}=1.
\end{equation}
As $\kappa_1\cap\kappa_3=\kappa_2\cap\kappa_4=\emptyset$, the set $\mathcal{K}_W$ is empty whenever $\set{1,3}$ or $\set{2,4}$ is a subset of $W$: 
\begin{align*}
 m_{1,3}m_{3,1}=m_{2,4}m_{4,2}&=0 &&\text{for $W=\set{1,3},\set{2,4}$},\\
 m_{1,2}m_{2,3}m_{3,1}+m_{1,3}m_{3,2}m_{2,1}&=0 &&\text{for $W=\set{1,2,3}$},\\
 m_{1,2}m_{2,4}m_{4,1}+m_{1,4}m_{4,2}m_{2,1}&=0 &&\text{for $W=\set{1,2,4}$},\\
 m_{1,3}m_{3,4}m_{4,1}+m_{1,4}m_{4,3}m_{3,1}&=0 &&\text{for $W=\set{1,3,4}$},\\
 m_{2,3}m_{3,4}m_{4,2}+m_{2,4}m_{4,3}m_{3,2}&=0 &&\text{for $W=\set{2,3,4}$}.
 \end{align*}
 Lastly, for $W=\{1,2,3,4\}$, we get
\begin{equation}
\label{E:Thm:Ex1-2}
m_{1,2}m_{2,3}m_{3,4}m_{4,1}+m_{1,2}m_{2,4}m_{4,3}m_{3,1}+m_{1,3}m_{3,2}m_{2,4}m_{4,1}+m_{1,3}m_{3,4}m_{4,2}m_{2,1}=0.
\end{equation}

	This system does not have any integer solution since \eqref{E:Thm:Ex1-1} forces $m_{1,2} = m_{2,3} = m_{3,4} = m_{4,1} = 1$.
	In particular $m_{1,2}m_{2,3}m_{3,4}m_{4,1}$ equals $1$, which is impossible by \eqref{E:Thm:Ex1-2}. We recover the observation that the square is unrealizable.
	\item Let $H$ be the following graph over $\set{a,b,c,d,e}$ with clique cover $\set{\kappa_1,\kappa_2,\kappa_3}$:
\begin{equation*}
\begin{tikzpicture}[Centering,line width=1]
\def\radi{1}
\node[WhiteLabelNode](e) at (0:\radi){\small$e$};
\node[WhiteLabelNode](d) at (-0.2,0){\small$d$};
\node[WhiteLabelNode](a) at (90:\radi){\small$a$};
\node[WhiteLabelNode](c) at (180:\radi){\small$c$};
\node[WhiteLabelNode](b) at (270:\radi){\small$b$};
\draw(a) to (c);
\draw(c) to (b);
\draw(b) to (d);
\draw(d) to (a);
\draw(b) to (e);
\draw(e) to (a);
\path(a) edge[bend left=20] (b);
\node[NullNode](H) at (-1.7,0){\small$H=$};
\begin{scope}[shift={(4.5,0)}]
\node[WhiteLabelNode](a) at (90:\radi){\small$a$};
\node[WhiteLabelNode](c) at (180:\radi){\small$c$};
\node[WhiteLabelNode](b) at (270:\radi){\small$b$};
\draw(a) to (c);
\draw(c) to (b);
\path(a) edge[bend left=20] (b);
\node[NullNode](H) at (-1.7,0){\small$\kappa_1=$};
\end{scope}
\begin{scope}[shift={(7,0)}]
\node[WhiteLabelNode](d) at (-0.2,0){\small$d$};
\node[WhiteLabelNode](a) at (90:\radi){\small$a$};
\node[WhiteLabelNode](b) at (270:\radi){\small$b$};
\draw(b) to (d);
\draw(d) to (a);
\path(a) edge[bend left=20] (b);
\node[NullNode](H) at (-1,0){\small$\kappa_2=$};
\end{scope}
\begin{scope}[shift={(9,0)}]
\node[WhiteLabelNode](e) at (0:\radi){\small$e$};
\node[WhiteLabelNode](a) at (90:\radi){\small$a$};
\node[WhiteLabelNode](b) at (270:\radi){\small$b$};
\draw(b) to (e);
\draw(e) to (a);
\path(a) edge[bend left=20] (b);
\node[NullNode](H) at (-0.5,0){\small$\kappa_3=$};
\end{scope}
\end{tikzpicture}
\end{equation*}
Then, considering $\emptyset\not=W\subseteq \{1,2,3\}$ we have   
\[
\begin{array}{c|c}
W & \mathcal{K}_W\\
\hline
\set{1} & \kappa_1\setminus(\kappa_2\cup\kappa_3)=\set{c}\\
\set{2} & \kappa_2\setminus(\kappa_1\cup\kappa_3)=\set{d}\\
\set{3} & \kappa_3\setminus(\kappa_1\cup\kappa_2)=\set{e}\\
\set{1,2} & (\kappa_1\cap\kappa_2)\setminus\kappa_3=\emptyset\\
\set{1,3} & (\kappa_1\cap\kappa_3)\setminus\kappa_2=\emptyset \\
\set{2,3} & (\kappa_2\cap\kappa_3)\setminus\kappa_1=\emptyset \\
\set{1,2,3} & \kappa_1\cap\kappa_2\cap\kappa_3=\set{a,b}
\end{array}
\]
and system (\ref{surjectivity_pol_system}) is given by:
	\begin{enumerate}[label=\roman*)]
		\item $m_{1,1} = m_{2,2} = m_{3,3} = 1$,
		\item $m_{1,2}m_{2,1} = m_{2,3}m_{3,2} = m_{3,1}m_{1,3} = 0$,
		\item $m_{1,2}m_{2,3}m_{3,1} + m_{1,3}m_{3,2}m_{2,1} = 2$.
	\end{enumerate}
	This system admits the following solution: $m_{1,1} = m_{2,2} = m_{3,3} = m_{1,2} = m_{2,3} = 1$, $m_{3,1} = 2$ and $m_{1,3} = m_{3,2} = m_{2,1} = 0$, as 
	well as 5 further solutions (obtained on changing which of the monomials $m_{1,2}m_{2,3}m_{3,1}$ and $m_{1,3}m_{3,2}m_{2,1}$ is null and which variable $m_{i,j}$ is equal to 2). This indicates that $H$ is realizable by digraph $G$ with adjacency matrix $\mat{A}_{i,j} = m_{i,j}$, that is
	\begin{equation*}
    \begin{tikzpicture}[Centering,line width=1]
    \def\radi{0.8}
    \node[WhiteLabelNode](1) at (120:\radi){\small$1$};
    \node[WhiteLabelNode](2) at (0:\radi){\small$2$};
    \node[WhiteLabelNode](3) at (240:\radi){\small$3$};
    \path[->](1) edge[bend left=35] (2);
    \path[->](2) edge[bend left=35] (3);
    \path[->](3) edge[bend left=35] (1);
    \path[->](3) edge[bend right=35] (1);
    \path[->](1) edge [out=90,in=150,distance=15] (1);
    \path[->](2) edge [out=-30,in=30,distance=15] (2);
    \path[->](3) edge [out=210,in=270,distance=15] (3);
    \node[NullNode](l) at (-1.2,0){\small$G=$};
    \end{tikzpicture}
	\end{equation*}
	for which we verify that $\phi(G)=H$ holds as predicted.
	\item 
Let $H$ be the following graph over $\set{a,b,c,d,e,f,g}$ with clique cover $\set{\kappa_1,\kappa_2,\kappa_3}$:
    \begin{equation*}
        \begin{tikzpicture}[Centering,line width=1]
        \def\radi{1.1}
        \node[WhiteLabelNode] (a) at (0:\radi){\small$a$};
        \node[WhiteLabelNode] (b) at (60:\radi){\small$b$};
        \node[WhiteLabelNode] (c) at (120:\radi){\small$c$};
        \node[WhiteLabelNode] (d) at (180:\radi){\small$d$};
        \node[WhiteLabelNode] (e) at (240:\radi){\small$e$};
        \node[WhiteLabelNode] (f) at (300:\radi){\small$f$};
        \node[WhiteLabelNode] (g) at (90:0.4*\radi){\small$g$};
        \node[WhiteLabelNode] (h) at (-90:0.4*\radi){\small$h$};
        \path(a) edge[bend right=10] (b);
        \path(b) edge[bend right=10] (c);
        \path(c) edge[bend right=10] (d);
        \path(d) edge[bend right=10] (e);
        \path(e) edge[bend right=10] (f);
        \path(f) edge[bend right=10] (a);
        \draw (d)+(220:0.25) arc (160:319:0.95*\radi);
        \draw (d)+(140:0.25) arc (200:41:0.95*\radi);
        \draw (b)+(20:0.25) arc (80:41-120:0.95*\radi);
        \draw (g) to (h);
        \draw (d) to (g);
        \draw (d) to (h);
        \draw (a) to (g);
        \draw (a) to (h);
        \draw (b) to (g);
        \draw (c) to (g);
        \draw (e) to (h);
        \draw (f) to (h);
        \path(b) edge[bend left=20] (h);
        \path(c) edge[bend right=20] (h);
        \path(e) edge[bend left=20] (g);
        \path(f) edge[bend right=20] (g);
        \node[NullNode] (l) at (-1.8,0){\small $H=$};
        \begin{scope}[shift={(3.2,0)}]
        \node[WhiteLabelNode] (a) at (0:\radi){\small$a$};
        \node[WhiteLabelNode] (b) at (60:\radi){\small$b$};
        \node[WhiteLabelNode] (f) at (300:\radi){\small$f$};
        \node[WhiteLabelNode] (g) at (90:0.4*\radi){\small$g$};
        \node[WhiteLabelNode] (h) at (-90:0.4*\radi){\small$h$};
        \path(a) edge[bend right=10] (b);
        \path(f) edge[bend right=10] (a);
        \draw (b)+(20:0.25) arc (80:41-120:0.95*\radi);
        \draw (g) to (h);
        \draw (a) to (g);
        \draw (a) to (h);
        \draw (b) to (g);
        \draw (f) to (h);
        \path(b) edge[bend left=20] (h);
        \path(f) edge[bend right=20] (g);
        \node[NullNode] (l) at (-0.6,0){\small $\kappa_1=$};
        \end{scope}
        \begin{scope}[shift={(7.5,0)}]
        \node[WhiteLabelNode] (b) at (60:\radi){\small$b$};
        \node[WhiteLabelNode] (c) at (120:\radi){\small$c$};
        \node[WhiteLabelNode] (d) at (180:\radi){\small$d$};
        \node[WhiteLabelNode] (g) at (90:0.4*\radi){\small$g$};
        \node[WhiteLabelNode] (h) at (-90:0.4*\radi){\small$h$};
        \path(b) edge[bend right=10] (c);
        \path(c) edge[bend right=10] (d);
        \draw (d)+(140:0.25) arc (200:41:0.95*\radi);
        \draw (g) to (h);
        \draw (d) to (g);
        \draw (d) to (h);
        \draw (b) to (g);
        \draw (c) to (g);
        \path(b) edge[bend left=20] (h);
        \path(c) edge[bend right=20] (h);
        \node[NullNode] (l) at (-1.9,0){\small $\kappa_2=$};
        \end{scope}
        \begin{scope}[shift={(10.5,0)}]
        \node[WhiteLabelNode] (d) at (180:\radi){\small$d$};
        \node[WhiteLabelNode] (e) at (240:\radi){\small$e$};
        \node[WhiteLabelNode] (f) at (300:\radi){\small$f$};
        \node[WhiteLabelNode] (g) at (90:0.4*\radi){\small$g$};
        \node[WhiteLabelNode] (h) at (-90:0.4*\radi){\small$h$};
        \path(d) edge[bend right=10] (e);
        \path(e) edge[bend right=10] (f);
        \draw (d)+(220:0.25) arc (160:319:0.95*\radi);
        \draw (g) to (h);
        \draw (d) to (g);
        \draw (d) to (h);
        \draw (e) to (h);
        \draw (f) to (h);
        \path(e) edge[bend left=20] (g);
        \path(f) edge[bend right=20] (g);
        \node[NullNode] (l) at (-1.9,0){\small $\kappa_3=$};
        \end{scope}
        \end{tikzpicture}
    \end{equation*}
    The system (\ref{surjectivity_pol_system}) is given by:
	\begin{enumerate}[label=\roman*)]
		\item $m_{1,1} = m_{2,2} = m_{3,3} = 1$,
		\item $m_{1,2}m_{2,1} = m_{2,3}m_{3,2} = m_{3,1}m_{1,3} = 1$,
		\item $m_{1,2}m_{2,3}m_{3,1} + m_{1,3}m_{3,2}m_{2,1} = 2$.
	\end{enumerate}
	This system admits the solution: $m_{i,j} = 1$ for every $1\leq i,j\leq 3$ The digraph $G$ with adjacency matrix $\mat{A}_{i,j} = 1$ is then:
	\begin{equation*}
        \begin{tikzpicture}[Centering,line width=1]
        \def\radi{0.7}
        \node[WhiteLabelNode](1) at (120:\radi){\small$1$};
        \node[WhiteLabelNode](2) at (0:\radi){\small$2$};
        \node[WhiteLabelNode](3) at (240:\radi){\small$3$};
        \path[->](1) edge[bend left=25] (2);
        \path[->](2) edge[bend left=25] (3);
        \path[->](3) edge[bend left=25] (1);
        \path[->](1) edge[bend left=15] (3);
        \path[->](3) edge[bend left=15] (2);
        \path[->](2) edge[bend left=15] (1);
        \path[->](1) edge [out=90,in=150,distance=15] (1);
        \path[->](2) edge [out=-30,in=30,distance=15] (2);
        \path[->](3) edge [out=210,in=270,distance=15] (3);
        \node[NullNode](l) at (-1.2,0){\small$G=$};
        \end{tikzpicture}
	\end{equation*}
	and we find that $H$ is the hike dependency graph of $G$ as requested.
	\end{itemize}
\end{examples}

We can deduce the following corollary from Theorem~\ref{surjectivity_eq_condition}.
\begin{corollary}
    All trees are realizable.
\end{corollary}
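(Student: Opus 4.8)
The plan is to exhibit, for each tree $T$, a clique cover whose associated polynomial system~\eqref{surjectivity_pol_system} admits an integer solution, and then invoke Theorem~\ref{surjectivity_eq_condition}. The natural clique cover for a tree is the set of its edges: for each edge $e=\set{u,v}$ of $T$, let $\kappa_e=\set{u,v}$ be the corresponding $2$-clique. Since $T$ is connected and has no triangles, every maximal clique is an edge, so this is a genuine clique cover with no trivial cliques. I would index these cliques $\kappa_1,\dots,\kappa_n$ where $n=|E(T)|$ and set up the system in the variables $(m_{ij})_{i,j\in[n]}$.

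The key simplification is to compute the sets $\mathcal{K}_W$ for this cover. Because each $\kappa_i$ has exactly two vertices, and distinct edges of a tree share at most one endpoint, the intersection $\bigcap_{v\in W}\kappa_v$ is empty as soon as $|W|\geq 3$ or as soon as $W$ consists of two non-adjacent edges (edges not sharing a vertex of $T$). Thus $\mathcal{K}_W$ is nonempty only in two cases: when $W=\set{i}$ is a single edge, in which case $\mathcal{K}_{\set{i}}$ is the set of vertices of $T$ belonging only to edge $\kappa_i$ (the private endpoints, i.e. leaves adjacent through that edge), and when $W=\set{i,j}$ is a pair of edges sharing a vertex $v$ of $T$, in which case $\mathcal{K}_{\set{i,j}}=\set{v}$ provided $v$ lies on no third edge. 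This last proviso forces me to be careful: at a vertex $v$ of degree $d\geq 3$ in $T$, the single vertex $v$ of $H$ lies in all $\binom{d}{2}$ pairwise intersections, yet belongs to none of the sets $\mathcal{K}_{\set{i,j}}$ since each such pair fails to exclude the other incident edges.

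This means the cardinalities on the right-hand side of~\eqref{surjectivity_pol_system} are governed entirely by the degrees of the vertices of $T$, and the system decouples nicely: the diagonal entries $m_{ii}$ are forced to equal $|\mathcal{K}_{\set{i}}|$, the products $m_{ij}m_{ji}$ over sharing pairs are forced to small values, and all higher equations read $0=0$ because the corresponding $\mathcal{K}_W$ and the relevant $\Cyc_W$-monomials both vanish. I would then construct an explicit integer solution — concretely, a small multidigraph $G$ on the vertex set $[n]$ realizing these counts — and check it satisfies every equation. The main obstacle I anticipate is precisely the bookkeeping at high-degree vertices: I must verify that the products $m_{ij}m_{ji}$ and the length-$\geq 3$ cyclic sums can all simultaneously be made to match the degree-dictated right-hand sides without creating spurious nonzero monomials. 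A clean way around this is to realize $T$ directly as a digraph in which each tree-edge becomes a digon (a pair of opposed arcs) so that each $\kappa_e$ is the unique $2$-cycle on that edge, verify by hand that the only simple cycles are these digons, and read off the matrix $\mat A$; Theorem~\ref{surjectivity_eq_condition} then certifies realizability with $m_{ij}=\mat A_{ij}$.
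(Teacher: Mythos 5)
Your overall strategy---covering the tree $T$ by its edges and invoking Theorem~\ref{surjectivity_eq_condition}---is exactly the paper's, but your computation of the sets $\mathcal{K}_W$ contains a genuine error that breaks the proof at every vertex of degree at least $3$. It is not true that $\bigcap_{e\in W}\kappa_e=\emptyset$ whenever $|W|\geq 3$: all $d$ edges incident to a common vertex $v$ of $T$ intersect in $\set{v}$, even though they pairwise share only that one endpoint. The correct statement is that $\mathcal{K}_W$ is nonempty exactly when $W=E(v)$ is the \emph{full} set of edges incident to some vertex $v$ of $T$, in which case $\mathcal{K}_W=\set{v}$; your two cases are just the instances $\deg v=1$ and $\deg v=2$. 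Your own remark that a degree-$d$ vertex lies in none of the pairwise sets $\mathcal{K}_{\set{i,j}}$ should have been the warning sign: the sets $\mathcal{K}_W$ partition the vertex set of $H=T$, so $v$ must land somewhere, namely in $\mathcal{K}_{E(v)}$ with $\card{E(v)}=d\geq 3$. Consequently the ``higher'' equations do not all read $0=0$: for each vertex $v$ one gets $\sum_{\sigma\in\Cyc_{E(v)}}\prod_{e\in E(v)}m_{e,\sigma(e)}=1$, and these are precisely the nontrivial constraints your sketch leaves unaddressed. The paper's proof solves them by choosing, for each $v$, one cyclic permutation $\sigma_v\in\Cyc_{E(v)}$ and setting $m_{e,\sigma_v(e)}=1$ for $e\in E(v)$ and all other variables to $0$; this choice simultaneously avoids the spurious nonzero monomials you worried about and yields, as the paper remarks, a directed medial graph of $T$ as the realizing digraph.

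Your fallback construction fails for a structural reason. Replacing each tree edge by a digon produces a digraph $G$ on the vertex set of $T$ whose simple cycles are indeed exactly the $\card{E(T)}$ digons, but two digons share a vertex if and only if the corresponding edges of $T$ are adjacent; hence $\phi(G)$ is the \emph{line graph} $L(T)$, not $T$ (for instance, $\phi$ of the bidirected star $K_{1,3}$ is the triangle $K_3$, and no tree satisfies $L(T)\cong T$). The underlying mix-up concerns which side of the correspondence in Theorem~\ref{surjectivity_eq_condition} each object lives on: the cliques $\kappa_e$ must become \emph{vertices} of the realizing digraph---so $G$ has one vertex per edge of $T$, not one per vertex of $T$---while the vertices of $H=T$ must appear as the simple cycles of $G$, which is exactly what the medial-graph solution above achieves.
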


\begin{proof}
    Let $T=(V,E)$ be a tree. The cliques of a tree are exactly its edges and its vertices, hence the unique clique cover of $T$ with no trivial clique is its set of edges
    $E=\set{e_1,\dots, e_n}$. For each $v\in V$, denote by $E(v)$ the set of edges containing $v$. The polynomial system (\ref{surjectivity_pol_system}) is then
    given by:
    $$\forall\, W\subseteq E,\quad \sum_{\sigma\in\Cyc_W} \prod_{e\in W} m_{e,\sigma(e)} = \left\{\begin{array}{cl}
    1  & \text{if $\exists v\in V,\, W = E(v)$}   \\ 
    0  & \text{else}\end{array}\right..$$
    A solution of this system consist in choosing a permutation $\sigma_{v}\in\Cyc_{E(v)}$ for each $v\in V$ and assigning 1 to each $m_{e,\sigma_v(e)}$ for $e\in E(v)$
    and 0 to all others $m_{i,j}$.
\end{proof}

\begin{remark}
    The digraph obtained in the above proof realizing a tree $T$ is a directed medial graph of $T$.
\end{remark}

\modifT{Beyond trees, Theorem~\ref{surjectivity_eq_condition} indicates that}{As mentionned in Corollary~\ref{realizability_decidable},} realizability is decidable: an algorithm checking for it can work through all possible cliques covers of a graph $H$,
verifying for each such cover if the accompanying system~(\ref{surjectivity_pol_system}) admits an integer solution. In practice, the search for cliques covers can be highly time consuming
since we are searching all the possible ways to cover a set (the edges of $H$) with a set of subsets (the cliques). This algorithm could be greatly accelerated by restricting the pool of
cliques covers. In fact, in Example~\ref{surjectivity_ex1}, we always presented a clique cover with minimal number of cliques. This seems to be always possible hence the following
conjecture.

\begin{conjecture}\label{surjectivity_conj}
	Let $H$ be a realizable graph. Then we can always find a digraph $G$ realizing $H$ with number of vertices $|V_G|$ equal to the smallest amount of cliques
	necessary to cover $H$.
	That is to say, Theorem~\ref{surjectivity_eq_condition} continues to hold if we require that $n$ be minimal. 
\end{conjecture}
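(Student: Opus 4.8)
Write $\theta(H)$ for the minimum number of cliques needed to cover $H$. The plan is to separate the statement into the two inequalities bounding $|V_G|$ by $\theta(H)$. One of them is immediate and should be recorded first: by observation~\ref{surjectivity_obs1} every digraph $G$ realizing $H$ induces the clique cover $\set{\kappa_v}_{v\in V_G}$, whence $\theta(H)\le|V_G|$ for \emph{every} realizing $G$. The entire content of the conjecture is thus the reverse inequality, namely that $H$ is realized by some digraph on exactly $\theta(H)$ vertices. By Theorem~\ref{surjectivity_eq_condition} this is equivalent to the assertion that, among the clique covers of $H$ of minimum size $\theta(H)$, at least one renders the polynomial system~(\ref{surjectivity_pol_system}) solvable in integers.

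The first approach I would attempt is reductive rather than constructive. Starting from a solvable clique cover with $n>\theta(H)$ cliques together with a witnessing digraph $G$ --- which, since a connected realizable graph is realizable by a strongly connected digraph, may be taken strongly connected, so that every vertex lies on a simple cycle and each clique $\kappa_v$ is non-empty --- the aim would be to produce a \emph{smaller} solvable cover with $n-1$ cliques and then to induct on $n-\theta(H)$. Concretely, I would look for a $\phi$-preserving contraction of $G$: an identification of two vertices that neither merges two distinct simple cycles, nor splits one, nor changes which pairs of simple cycles meet. Any such contraction lowers $|V_G|$ by one while preserving $\phi(G)=H$, and hence, through Theorem~\ref{surjectivity_eq_condition}, delivers the required smaller solvable cover.

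The hard part will be to show that $n>\theta(H)$ forces the existence of such a contractible pair. This is precisely the point at which Question~\ref{question_inj} re-enters: a $\phi$-preserving contraction is one of the graph transformations whose classification is left open, so one cannot simply invoke a known inventory of them. I would try instead to extract the pair combinatorially from the failure of minimality --- if the cover $\set{\kappa_v}$ is not of minimum size then some family of cliques is redundant for covering the edges and vertices of $H$ --- and then to convert that global redundancy into a local condition on $G$ guaranteeing a legitimate identification. I expect this conversion to be the genuine difficulty, and possibly the point where the naive scheme breaks: minimality of a clique cover is a property of $H$ alone, whereas admissibility of a contraction is a structural property of the chosen $G$, and there is no a priori reason the two must be reconcilable within the same digraph.

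As a fallback I would attack a minimum cover directly and algebraically. Ordering the subsets $W$ by cardinality makes system~(\ref{surjectivity_pol_system}) triangular: the singleton equations fix the diagonal entries $m_{v,v}=\card{\mathcal{K}_{\set{v}}}$, the pair equations constrain the products $m_{u,v}m_{v,u}=\card{\mathcal{K}_{\set{u,v}}}$, and each larger $W$ adds one equation coupling the already-chosen variables through the cyclic sum over $\Cyc_W$. One would then try to solve greedily from singletons upward, using a minimum cover assembled from maximal cliques so as to keep as many $\card{\mathcal{K}_W}$ as possible equal to zero or one. The obstruction, and in my view the true heart of the conjecture, is the nonlinearity for $|W|\ge 3$: the pairwise choices over-determine the cyclic sums, so solvability for a minimum cover does not follow from solvability for an arbitrary one, and showing that these over-determined constraints can always be met simultaneously for \emph{some} minimum cover is exactly what must be proved.
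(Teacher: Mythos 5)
You have not proved the statement, and you should be aware that the paper does not either: in the source this is Conjecture~\ref{surjectivity_conj}, explicitly left open. Your decomposition nevertheless matches the paper's own discussion exactly. The inequality $\theta(H)\le\card{V_G}$ for every realizing $G$ via observation \textit{i}) is the direction the paper records as true (``finding a clique cover with minimal number of cliques such that the system has an integer solution implies realizability''), and your first, reductive strategy --- transform $G$ so as to preserve $\phi(G)$ while lowering $\card{V_G}$ --- is verbatim the ``natural approach'' the paper sketches immediately after the conjecture and ties to the open $\phi$-injectivity Question~\ref{question_inj}. Since you candidly flag the two decisive gaps rather than close them, the proposal is an accurate map of the difficulty, not a proof; nothing in it goes beyond what the paper already identifies as missing.

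Two of your steps would in fact fail concretely, and it is worth naming why. First, the induction step of the reductive plan: Section 4 of the paper shows that the known $\phi$-preserving moves (edge reversal, jumping) do not yield a canonical reduction --- it exhibits three realizations of $K_{17}$ that do not reduce to a common digraph under them --- so the existence of a ``$\phi$-preserving contraction'' whenever $n>\theta(H)$ cannot be extracted from any available inventory, exactly as you feared; moreover, identifying two vertices of a digraph generically creates new simple cycles (the same mechanism that makes the square unrealizable in Figure~\ref{F:SquareConstruction}), so your three admissibility conditions are typically unsatisfiable and an existence argument would have to be built from scratch. Second, your fallback heuristic of assembling a minimum cover from \emph{maximal} cliques is refuted by the paper's Remark following the conjecture: for $H$ the join of the square with $K_4$, the minimum cover size is $4$, the cover by the four maximal cliques makes system~(\ref{surjectivity_pol_system}) unsolvable in integers, yet a different cover of the same minimal size using non-maximal cliques is solvable. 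So any proof must quantify over all minimum covers, which reinstates precisely the nonlinear over-determination for $\card{W}\ge 3$ that you correctly single out as the heart of the conjecture.
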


The fact that finding a clique cover with minimal number of cliques such that system~(\ref{surjectivity_pol_system}) has an integer solution implies realizability is true. It is the converse assertion that is difficult to prove: does realizability always imply the existence of a {\em minimal} clique cover? The natural approach to prove this would 
be to suppose that a graph $H$ is realizable by a digraph $G$ and then transform $G$ so as to preserve its hike dependency graph while lowering its number of vertices. This is non-trivial and directly related to the question of $\phi$-injectivity which we study in the next section.\\[-.5em]

\begin{remark}
    Contrary to the second and third cases of Examples~\ref{surjectivity_ex1}, even with Conjecture~\ref{surjectivity_conj} true, the clique cover leading to an integer solution of (\ref{surjectivity_pol_system}) is not necessarily made
    of maximal cliques. For $H_1=(V_1,E_1)$ and $H_2=(V_2,E_2)$ two graphs, let us denote by $H_1\!\bowtie \! H_2$ the graph with vertex set $V_1\sqcup V_2$ and edge set 
    $E_1\sqcup E_2\sqcup (V_1\times V_2$). Let $H_1$ be the square graph on vertices $\set{a,b,c,d}$, $H_2$ be the complete graph on 4 vertices $\set{e,f,g,h}$ and $H$ be the graph $H_1\!\bowtie \! H_2$.
     On $H$, the minimal size of a clique cover is $4$, since every clique is a subgraph of a maximal clique and $H$ has 4 maximal cliques: $\set{a,b,e,f,g,h}$, $\set{b,c,e,f,g,h}$, $\set{c,d,e,f,g,h}$ and $\set{d,a,e,f,g,h}$. One can check that system~(\ref{surjectivity_pol_system}) does not have 
    an integer solution if we choose the maximal cliques to form the clique cover, yet it has an integer solution if we choose instead cliques $\set{a,b,e,f,g}$, 
    $\set{b,c,e,f,g,h}$, $\set{c,d,e,f,h}$ and $\set{d,a,e,f,g,h}$ to form the clique cover. This solution leads to the following digraph:
    \begin{equation*}
        \begin{tikzpicture}[Centering,line width=1]
        \def\radi{0.8}
        \node[WhiteNode](1) at (135:\radi){};
        \node[WhiteNode](2) at (45:\radi){};
        \node[WhiteNode](3) at (315:\radi){};
        \node[WhiteNode](4) at (225:\radi){};
        \draw[->](4) to (2);
        \path[->](1) edge[bend left=15] (2);
        \path[->](2) edge[bend left=15] (1);
        \path[->](2) edge[bend left=15] (3);
        \path[->](3) edge[bend left=15] (2);
        \path[->](3) edge[bend left=15] (4);
        \path[->](4) edge[bend left=15] (3);
        \path[->](1) edge[bend left=15] (4);
        \path[->](4) edge[bend left=15] (1);
        \end{tikzpicture}
    \end{equation*}
    \vspace{5mm}
\end{remark}

Using Proposition~\ref{observations_prop_cycles} to test for non-realizability, Conjecture~\ref{surjectivity_conj} to test for realizability and with ad-hoc arguments in 9 remaining undecided cases, we computed the number of unlabelled connected realizable graphs with up to $7$ vertices as (OEIS \OEIS{A348365}, computations based on the list of unlabelled connected graphs available at \url{http://users.cecs.anu.edu.au/~bdm/data/graphs.html} and \cite{McKay2014}\modifJ{.}{}) 
\[
1,\,1,\,2,\,5,\,15,\,58,\,265.
\] 
In comparison, the number of unlabelled connected graphs on up to $7$ vertices is known to be (OEIS \OEIS{A001349}) 
\[
1,\,1,\,2,\,6,\,21,\,112,\,853{.}
\]
Whilst most graphs are realizable for small number of vertices this changes when more vertices are considered. 
We conjecture that the proportion of realizable graphs on $n$ vertices with respect to all unlabelled connected graphs on $n$ vertices decreases as $n$ increases. At the same time, we know that:

\begin{proposition}
Let $h_n$ be the number of connected realizable graphs on $n\geq 1$ vertices. Then the sequence $\{h_n\}_{n\geq 1}$ is strictly increasing, and grows at least exponentially with $n$ as $n\to \infty$. 
\end{proposition}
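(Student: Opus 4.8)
The plan is to obtain the lower bound from trees and to obtain strict growth from an order-raising injection together with an explicit family of graphs lying outside its image. Throughout let $\mathcal{R}_n$ denote the set of isomorphism classes of connected realizable graphs on $n$ vertices, so that $h_n=|\mathcal{R}_n|$.

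For the exponential lower bound I would invoke the corollary that every tree is realizable. As trees are connected, each unlabelled tree on $n$ vertices is counted by $h_n$, so $h_n\ge t_n$, where $t_n$ is the number of unlabelled trees on $n$ vertices. Since $t_n$ grows exponentially (already the caterpillars alone number at least $2^{n-4}$, and Otter's theorem gives $t_n\sim C\alpha^{n}n^{-5/2}$ with $\alpha\approx 2.955$), this shows that $h_n$ grows at least exponentially as $n\to\infty$.

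For strict monotonicity I would construct an injection $\iota\colon\mathcal{R}_n\to\mathcal{R}_{n+1}$. The natural candidate attaches a single pendant vertex $w$ to a canonically chosen vertex $v^{\ast}$ of $H$, say the canonical vertex of maximum degree. Injectivity would then be clean: after the attachment $v^{\ast}$ becomes the \emph{unique} vertex of $\iota(H)$ of maximum degree $\Delta(H)+1$, hence is recognizable without reference to $H$; the added leaf is one of its pendant neighbours, and since any two pendant leaves of a common vertex are exchanged by an automorphism, deleting any pendant leaf of that distinguished vertex returns $H$ up to isomorphism. Strictness would be immediate: the complete graph $K_{n+1}$ is realizable (by a bouquet of $n+1$ self-loops at one vertex) but has no vertex of degree $1$, so it is not of the form $\iota(H)$; hence $h_{n+1}\ge h_n+1$. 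As $h_1=h_2=1$, the sequence is strictly increasing from its second term onward, which is the content of the statement, the two equal initial terms being the only boundary to acknowledge.

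The hard part, and the step I expect to be the main obstacle, is the realizability-preserving lemma underlying $\iota$: that attaching a pendant to a realizable graph again yields a realizable graph. In digraph terms one must enlarge a realization $G$ of $H$ (strongly connected, by the first Proposition) by a gadget that creates \emph{exactly one} new simple cycle, namely $w$, sharing a vertex with the cycle $c_{v^{\ast}}$ and with no other simple cycle. This is easy when $c_{v^{\ast}}$ can be realized as a self-loop, which happens precisely when $v^{\ast}$ is simplicial: one replaces that self-loop by a digon to a fresh vertex carrying its own self-loop, and no further cycle appears. In general, however, every arc of $c_{v^{\ast}}$ may be shared with other simple cycles, and subdividing a shared arc would drag those cycles along; the genuine work is therefore to first re-realize $H$ so that $c_{v^{\ast}}$ runs through an arc traversed by no other simple cycle, and only then to subdivide that private arc and hang a self-loop on the new vertex. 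Securing such a private arc is exactly the sort of cycle-structure-preserving surgery that makes the $\phi$-injectivity question of Section~\ref{Surjec} delicate. Should this construction fail at some vertex, one can retreat to the robust operation of adjoining a simplicial vertex adjacent to an entire clique $\kappa_p$ (realized by a single self-loop at $p$, which provably introduces no other simple cycle); with that operation every image graph has a simplicial vertex, so strictness would instead be witnessed by the hike dependency graphs $C_k\bowtie K_2$ of bidirected cycles, which are realizable yet have no simplicial vertex for $k\ge 4$ and hence escape the image.
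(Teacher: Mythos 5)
Your proposal splits into two halves, and the comparison differs for each. For the exponential lower bound you take a genuinely different and more elementary route than the paper: the paper glues realizations at a vertex to obtain $h_{n+m}\geq h_n h_m$, then applies Fekete's lemma to the superadditive sequence $\log h_n$, whereas you simply invoke the paper's own corollary that all trees are realizable and bound $h_n$ below by the number of unlabelled trees (already the caterpillars give $h_n\geq 2^{n-4}$). This is correct, shorter, and yields an explicit exponential rate; the only thing the paper's gluing argument buys in exchange is that the same construction also powers its monotonicity step. For strict monotonicity your \emph{fallback} is essentially the paper's proof in disguise: adjoining a vertex adjacent to a whole clique $\kappa_p$ is exactly the paper's gluing with the one-vertex graph (a self-loop added at $p$), and your witness $C_k\bowtie K_2=\phi(\text{bidirected }C_k)$ is precisely the paper's cone graph. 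Your exclusion argument is actually cleaner than the paper's: every image graph has a simplicial vertex while the cone has none for $k\geq4$, which replaces the paper's more involved case analysis via Proposition~\ref{observations_prop_cycles}. You also rightly flag that $h_1=h_2=1$, so strict increase can only start at $n=2$ --- a boundary the paper's statement and proof pass over in silence (and note your witness only covers $n+1\geq 6$; the small cases must be read off the computed values $1,1,2,5,15$, an issue the paper's cone argument shares).

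Two gaps remain. First, your primary injection (pendant attached to the canonical maximum-degree vertex $v^{\ast}$) rests on the unproven lemma that $H$ plus a pendant at an \emph{arbitrary prescribed} vertex is again realizable. You correctly identify this as the crux, but you should not expect the ``private arc'' surgery to rescue it: re-realizing $H$ so that $c_{v^{\ast}}$ runs through an arc used by no other simple cycle is exactly the kind of $\phi$-equivalence surgery the paper demonstrates it cannot control, and for $H=C_k\bowtie K_2$ with $v^{\ast}$ an apex there is no evident realization giving the apex cycle a private arc (the bidirected cycle shares every apex arc with a digon, and by Theorem~\ref{poset_charac} no other \emph{bidirected} realization exists). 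So the fallback must carry the whole proof. Second, the fallback as written establishes the ``$+1$'' but not the ``$h_{n+1}\geq h_n$'' part: your unique-maximum-degree recovery argument was tailored to the pendant map and does not transfer, and attaching a simplicial vertex along $\kappa_p$ is not injective without a canonical choice of $p$. Concretely, $P_3$ realized as loop--digon--loop admits $\kappa_p=\{b,c\}$, and attaching there yields the paw; but $K_3$, realized with one loop subdivided into a digon, admits a singleton $\kappa_r$, and attaching there yields the paw as well --- so careless choices genuinely collide. To be fair, the paper glosses the very same point, asserting $h_{n+m}\geq h_n h_m$ without proving the gluing injective on isomorphism classes, so your fallback matches the paper's level of rigor; but a complete proof along your lines would need either a canonical choice of $\kappa_p$ with a recoverability argument, or a more careful count of the image.
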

\begin{proof}
Let $H_1$ and $H_2$ be two realizable graphs on $n\geq 1$ and $m\geq 1$ vertices, respectively, and  
let $G_1\in\phi^{-1}(H_1)$ and $G_2\in\phi^{-1}(H_2)$. Let $v_1$ be a vertex of $G_1$ and $v_2$ be a vertex of $G_2$ and consider the glued digraph $G_3$ obtained by taking the disjoint union of
$G_1$ and $G_2$ and fusing $v_1$ with $v_2$. Then $H_3=\phi(G_3)$ is obtained by taking the disjoint union of $H_1$ and $H_2$ and adding an edge between each vertex
of $H_1$ corresponding to a cycle containing $v_1$ and each vertex of $H_2$ corresponding to a cycle containing $v_2$. In particular, since no simple cycle was created in $G_3$ that is not already in $G_1$ or $G_2$, $H_3$ has $n+m$ vertices and 
$h_{n+m} \geq h_n\,h_m$. From this we take the logarithm to get $\log h_{n+m}\geq \log h_n + \log h_m$, i.e., the sequence $\{\log h_n\}_{n\geq 1}$ is superadditive. Then by Fekete's lemma $\log \eta :=\lim_{n\to\infty} n^{-1} \log  h_{n} $ exists and is given by $\sup_{n\geq 1} n^{-1} \log  h_{n}$. If this $\sup$ is finite, $\eta$ is a finite constant and $h_n$ grows exponentially as $n \to \infty$. If the sup is infinite then $h_n$ grows superexponentially as $n\to\infty$. 

Second, setting $m=1$ leads to $h_{n+1}\geq h_n h_1=h_n$ as there is only one graph $H$ on one vertex, which is realized by the graph $G_1$ made of a single simple cycle $c_1$ of arbitrary length. Now consider the cone graph $H_{n+1}$ comprising an induced cycle on $n-1$ vertices with two additional vertices connected to all others. This graph cannot be obtained as the hike dependency graph of a digraph obtained by gluing $G_1$ to another digraph $G'$. Indeed, $c_1$ can neither be one of the two summits of the cone because by Proposition~\ref{observations_prop_cycles}, $H_{n+1}\backslash c_1$ would not be realizable; nor can $c_1$ be a part of the long induced cycle in $H_{n+1}$ as all  neighbors of $c_1$ would form a clique.
Then $h_{n+1}\geq h_n +1>h_n$ and the sequence is strictly increasing.
\end{proof}

The non-realizability of most of the graphs on up to 7 vertices is decided thanks to Proposition~\ref{observations_prop_cycles}, which relies on induced cycles of simple cycles. 
Overall, when evaluating $h_7$, we are left with 9 graphs for which ad-hoc arguments must be found as their realizability could neither be invalidated by the Proposition nor confirmed by the Conjecture; either because system (\ref{surjectivity_pol_system}) admits no solution on minimal clique covers or because even for such covers exhaustively checking for the absence of a solution  is not computationally feasible. We present the case-by-case analysis of the 9 graphs in {\ref{AppA}}. 

As the number of vertices considered increases, we expect the number of graphs left undecided by Proposition~\ref{observations_prop_cycles} as well as their diversity to increase quickly. This explains why we could not determine the number of realizable graphs on 8 vertices. Indeed, in all cases unconcerned by Proposition~\ref{observations_prop_cycles} (that represents $1662$ graphs on 8 vertices, of which 23 are realizable trees) for which no solution to system (\ref{surjectivity_pol_system}) can be found more ad-hoc arguments must be devised. The number of graphs in this situation is much larger than 9, not to mention that the arguments are both more difficult to find and more diverse than for the 9 graphs mentioned above.\\

We conclude this section by an observation about those graphs that are unrealizable. Since they are not, we might think that the elements of the corresponding trace monoids $\mathcal{T}$ cannot be drawn as cycles on a digraph and are therefore essentially different from walks and hikes. This however is incorrect.

\begin{proposition}\label{RealizSub}
Let $H$ be a graph that is unrealizable. Then there exist at least one realizable graph $H'$ such that $H$ is an induced subgraph of $H'$. Equivalently, let $\mathcal{T}$ be a trace monoid that is not a hike monoid. Then $\mathcal{T}$ is a submonoid of a hike monoid $\mathcal{H}'$.  
\end{proposition}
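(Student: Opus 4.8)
The plan is to prove the graph-theoretic formulation directly and then transfer it to monoids via a standard property of trace monoids. Concretely, I would show the stronger statement that \emph{every} graph $H=(V_H,E_H)$ occurs as an induced subgraph of some realizable graph; the hypothesis that $H$ be unrealizable plays no role in the construction and is only what makes the conclusion interesting. So the first step is to reduce the proposition to building one explicit digraph $G$ together with a distinguished set of simple cycles $\{c_v\}_{v\in V_H}$ whose pairwise intersection pattern reproduces $H$ exactly, so that the induced subgraph of $H'=\phi(G)$ on these cycles equals $H$.

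For the construction I would realize $H$ as an intersection graph of cycles on a digraph. For each edge $e=\{i,j\}\in E_H$ introduce a single \emph{junction} vertex $x_e$ in $G$. For each vertex $i\in V_H$ with incident edges $e_1,\dots,e_d$ (in an arbitrary order, $d\geq 1$) introduce $d$ fresh \emph{private} transient vertices $p^i_1,\dots,p^i_d$ and let $c_i$ be the simple cycle $x_{e_1}\,p^i_1\,x_{e_2}\,p^i_2\cdots x_{e_d}\,p^i_d$ (closing back to $x_{e_1}$), so that $c_i$ threads through exactly the junctions of the edges at $i$, separated by vertices private to $i$. An isolated vertex $i$ (with $d=0$) is assigned a self-loop on a single private vertex. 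The private vertices guarantee that the cycles are pairwise distinct even for degree-one vertices sharing their unique edge, and that no two cycles meet away from the junctions.

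The verification is then short. The junctions visited by $c_i$ are $\{x_e: i\in e\}$, and the private vertices are disjoint across different $i$, so $V(c_i)\cap V(c_j)=\{x_{\{i,j\}}\}$ when $\{i,j\}\in E_H$ and $V(c_i)\cap V(c_j)=\emptyset$ otherwise. Hence $c_i$ and $c_j$ are adjacent in $\phi(G)$ if and only if $\{i,j\}\in E_H$, which is precisely the assertion that $H$ is the induced subgraph of $H'=\phi(G)$ on the vertices $\{c_v\}_{v\in V_H}$, and $H'$ is realizable by construction. The point to emphasize here, which is the only genuine subtlety in this half, is that the extra simple cycles necessarily produced at each junction $x_e$ (where two cycles cross and create mixed circuits, exactly as in the square example of~\eqref{EqSquare}) are harmless: they are additional vertices of $H'$ lying outside $\{c_v\}_{v\in V_H}$, and the induced subgraph on $\{c_v\}_{v\in V_H}$ depends only on the pairwise intersections computed above.

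Finally I would transfer this to the monoid statement. The trace monoid $\mathcal{T}$ with dependency graph $H$ embeds in the trace monoid with dependency graph $H'$ by the standard fact that, when $A\subseteq V_{H'}$ induces $H$, the submonoid generated by the letters of $A$ is isomorphic to $\mathcal{T}_H$: two words on the sub-alphabet $A$ are trace-equivalent in $H'$ if and only if they are trace-equivalent in $H$, because any sequence of adjacent commutations between words on $A$ only ever swaps two letters of $A$, and such a swap is permitted in $H'$ exactly when the corresponding non-edge also lies in the induced graph $H$. Since $H'$ is realizable, its trace monoid is the hike monoid $\mathcal{H}'=\mathcal{H}_G$, giving $\mathcal{T}\cong$ a submonoid of $\mathcal{H}'$. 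I expect the main obstacle to be purely expository rather than mathematical: pinning down the induced-subgraph bookkeeping (isolated and degree-one vertices, and the claim that the junction-crossing cycles add no edges among the $\{c_v\}$), and citing the trace-submonoid fact at the right level of rigor.
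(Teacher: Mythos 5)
Your proposal is correct and follows essentially the same route as the paper, whose (very brief) proof likewise draws simple cycles sharing vertices so as to reproduce the dependencies dictated by $H$ and observes that any extra cycles so created only add vertices to $H'=\phi(G')$ outside the distinguished set, leaving $H$ as an induced subgraph. Your junction-and-private-vertex construction, the check that mixed junction-crossing cycles are harmless, and the standard induced-subalphabet embedding of trace monoids simply make explicit what the paper leaves as a sketch.
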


This indicates that elements of $\mathcal{T}$ are walks and walk-like objects after all, yet these are not drawable as such by themselves. Perhaps more strikingly, since $\mathcal{T}$ is a monoid in its own right it is \emph{algebraically closed}. Elements of $\mathcal{H}'\backslash \mathcal{T}$ are, in this sense, algebraically unrelated to those of $\mathcal{T}$. Yet allowing these additional elements turns the undrawable members of $\mathcal{T}$ into drawable objects.

\begin{proof}
 Given any graph $H$, we can always draw simple cycles sharing vertices so as to produce the cycle dependencies  dictated by $H$. Since $H$ is unrealizable this procedure must have created additional simple cycles in the graph $G'$ just  drawn. This implies that $H$ is an induced subgraph of $H'=\phi(G')$. 
\end{proof}

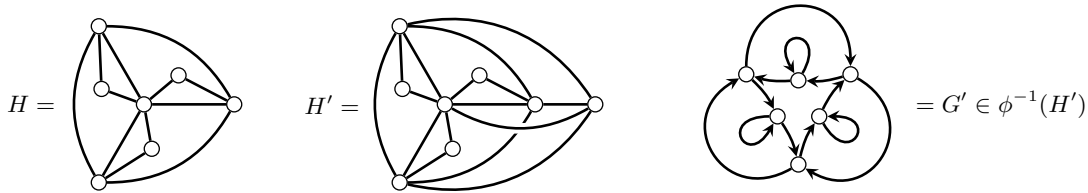
\begin{figure}[h!]
\vspace{-8mm}
    \centering
    \begin{tikzpicture}[Centering,line width=1]
    \def\radi{1.2}
    \def\radii{0.6}
    \def\alpha{40}
    \node[WhiteNode](c) at (0,0){};
    \node[WhiteNode](b1) at (\alpha:\radii){};
    \node[WhiteNode](b2) at (120+\alpha:\radii){};
    \node[WhiteNode](b3) at (240+\alpha:\radii){};
    \node[WhiteNode](a1) at (0:\radi){};
    \node[WhiteNode](a2) at (120:\radi){};
    \node[WhiteNode](a3) at (240:\radi){};
    \node[NullNode](l) at (-1.5,0){\small$H=$};
    \draw[-](c) to (a1);
    \draw[-](c) to (a2);
    \draw[-](c) to (a3);
    \draw[-](c) to (b1);
    \draw[-](c) to (b2);
    \draw[-](c) to (b3);
    \draw[-](a1) to (b1);
    \draw[-](a2) to (b2);
    \draw[-](a3) to (b3);
    \path[-](a1) edge[bend right=30] (a2);
    \path[-](a2) edge[bend right=30] (a3);
    \path[-](a3) edge[bend right=30] (a1);
    \begin{scope}[shift={(4,0)}]
     \node[WhiteNode](c) at (0,0){};
    \node[WhiteNode](b1) at (\alpha:\radii){};
    \node[WhiteNode](b2) at (120+\alpha:\radii){};
    \node[WhiteNode](b3) at (240+\alpha:\radii){};
    \node[WhiteNode](a1) at (0:\radi){};
    \node[WhiteNode](a2) at (120:\radi){};
    \node[WhiteNode](a3) at (240:\radi){};
    \node[WhiteNode](d) at (2,0){};
    \node[NullNode](l) at (-1.5,0){\small$H'=$};
    \path[-](a1) edge[bend right=30] (a2);
    \path[-](a3) edge[bend right=30] (a1);
    \path[-,line width=4,color=white](d) edge[bend left=30] (c);
    \draw[-](c) to (a1);
    \draw[-](c) to (a2);
    \draw[-](c) to (a3);
    \draw[-](c) to (b1);
    \draw[-](c) to (b2);
    \draw[-](c) to (b3);
    \draw[-](a1) to (b1);
    \draw[-](a2) to (b2);
    \draw[-](a3) to (b3);
    \draw[-](d) to (a1);
    \path[-](a2) edge[bend right=30] (a3);
    \path[-](d) edge[bend left=30] (c);
    \path[-](d) edge[bend right=35] (a2);
    \path[-](d) edge[bend left=35] (a3);
    \end{scope}
    \begin{scope}[shift={(8.7,0)}]
    \def\radi{0.8}
    \def\radii{0.4*\radi}
    \node[WhiteNode] (a) at (30:\radi){};
    \node[WhiteNode] (b) at (150:\radi){};
    \node[WhiteNode] (c) at (270:\radi){};
    \node[WhiteNode] (d) at (330:\radii){};
    \node[WhiteNode] (e) at (90:\radii){};
    \node[WhiteNode] (f) at (210:\radii){};
    \path[->](a) edge[bend left=10] (e);
    \path[->](e) edge[bend left=10] (b);
    \path[->](b) edge[bend left=10] (f);
    \path[->](f) edge[bend left=10] (c);
    \path[->](c) edge[bend left=10] (d);
    \path[->](d) edge[bend left=10] (a);
    \path[<-](a) edge[in=90,out=90,distance=30] (b);
    \path[<-](b) edge[in=210,out=210,distance=30] (c);
    \path[<-](c) edge[in=330,out=330,distance=30] (a);
    \path[<-](d) edge[in=300,out=360,distance=20] (d);
    \path[<-](e) edge[in=60,out=120,distance=20] (e);
    \path[<-](f) edge[in=180,out=240,distance=20] (f);
    \node[NullNode](l) at (2.7,0){\small$=G'\in\phi^{-1}(H')$};
    \end{scope}
    \end{tikzpicture}
    \vspace{-3mm}
    \caption{An example of Proposition~\ref{RealizSub}, graph $H$ is unrealizable yet appears as an induced subgraph of $H'$ which is realized by the digraph $G'$ shown on the  right.}
\end{figure}

\begin{remark}\label{remrealizsub}
Proposition~\ref{RealizSub} generalizes our preceding observation in (\ref{EqSquare}) about the unrealizable square graph to all unrealizable graphs. As a corollary there cannot be a forbidden induced-subgraph criterion for realizability. 
\end{remark}

\section{Injectivity: graphs with isomorphic hike monoids}
\subsection{Transformations between $\phi$-equivalent graphs}
We now turn to question~2 of \S\ref{section_observations}, that is we seek to find a description of $\phi$-equivalence.
{We first present two sufficient conditions for two digraphs to be $\phi$-equivalent, then show how these fail to be necessary on examples we found to be particularly telling of the difficulty of describing $\phi$-equivalence classes.}

{We start with} three digraph transformations which preserve the hike monoid. Let $G=(V,E)$ be a digraph. For $(v_1,v_2)$, an edge of $G$, we call {\em reversing} $(v_1,v_2)$ the transformation consisting in replacing the edge $(v_1,v_2)$ by the edge $(v_2,v_1)$. 
For $v$ a vertex of $G$, we call {\em jumping} the vertex $v$ the transformation which consists in removing $v$ from $V$ and adding an edge $(v_1, v_2)$ for each pair of edges $(v_1,v)$ and $(v,v_2)$.
See below for an example of jumping at the gray vertex:


\begin{equation*}
    \begin{tikzpicture}[line width=1,Centering]
    \def\radi{1.4}
    \node[GrayNode] (a) at (0,0){};
    \node[WhiteNode] (b) at (\radi,0){};
    \node[BlackPointNode] (c) at (140:\radi){};
    \node[BlackPointNode] (d) at (220:\radi){};
    \node[GrayPointNode] (e) at ($(\radi,0)+(40:\radi)$){};
    \node[GrayPointNode] (f) at ($(\radi,0)+(-40:\radi)$){};
    \draw[->] (c) to (a);
    \draw[->] (d) to (a);
    \path[->] (a) edge[bend left=25] (b);
    \path[->] (a) edge[bend right=25] (b);
    \draw[->,lightgray] (b) to (e);
    \draw[->,lightgray] (f) to (b);
    \begin{scope}[shift={(6,0)}]
    \node[WhiteNode] (a) at (0,0){};
    \node[BlackPointNode] (c) at (140:\radi){};
    \node[BlackPointNode] (d) at (220:\radi){};
    \node[GrayPointNode] (e) at (40:\radi){};
    \node[GrayPointNode] (f) at (-40:\radi){};
    \path[->] (c) edge[bend left=20] (a);
    \path[->] (c) edge[bend right=20] (a);
    \path[->] (d) edge[bend left=20] (a);
    \path[->] (d) edge[bend right=20] (a);
    \draw[->,lightgray] (a) to (e);
    \draw[->,lightgray] (f) to (a);
    \end{scope}
    \node[NullNode] (l) at (3.6,0){\Large$\rightsquigarrow$};
    \node[NullNode] (m) at (3.6,0.3){\small jump};
    \end{tikzpicture}
\end{equation*}

\begin{proposition}\label{injectivity_transfo}
    Let $G$ be a digraph.
    \begin{enumerate}
        \item Reversing all the edges of $G$ preserves the hike monoid of $G$.
        \item \label{injectivity_transfo2}Let $v_1,v_2$ be two vertices of $G$ and all outgoing edges from $v_1$ point to $v_2$. Then jumping $v_1$ preserves the hike monoid of $G$.
        \item \label{injectivity_transfo3}Let $v_1,v_2$ be two vertices of $G$ and all in-going edges to $v_2$ come from $v_1$. Then jumping $v_2$ preserves the hike monoid of $G$.
    \end{enumerate}
\end{proposition}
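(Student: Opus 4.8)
The plan is to reduce everything to the hike dependency graph. As recalled in the preliminary discussion before Question~\ref{question_surj}, two digraphs have isomorphic hike monoids precisely when their hike dependency graphs are isomorphic, so for each of the three transformations it suffices to produce an isomorphism between $\phi(G)$ and $\phi(G')$, where $G'$ is the transformed digraph. Since the vertices of a hike dependency graph are the simple cycles and its edges record which pairs of simple cycles share at least one vertex, such an isomorphism amounts to a bijection $\Psi$ between the simple cycles of $G$ and those of $G'$ that preserves the relation $V(c)\cap V(c')\neq\emptyset$ in \emph{both} directions.

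For part~1 (reversal), I would send a simple cycle $c=(i_0,i_1)\cdots(i_{\ell-1},i_0)$ to the cycle traversing the same vertices in the opposite orientation. This is a bijection between the simple cycles of $G$ and those of its reverse $G^R$, and it leaves every vertex set $V(c)$ unchanged; hence the sharing relation is preserved verbatim, giving the desired isomorphism $\phi(G)\cong\phi(G^R)$.

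For part~2, I would exploit the structural hypothesis: since every arc leaving $v_1$ ends at $v_2$, any simple cycle through $v_1$ has the form $u\to v_1\to v_2\to\cdots\to u$ and therefore also visits $v_2$. I would then define $\Psi$ by fixing every simple cycle that avoids $v_1$ and sending each cycle $u\to v_1\to v_2\to\cdots\to u$ to $u\to v_2\to\cdots\to u$, using the arc $(u,v_2)$ created by the jump (tracking edge multiplicities so that the pair of arcs $(u,v_1)_k,(v_1,v_2)_l$ corresponds to the new arc indexed by $(k,l)$). To see $\Psi$ is a bijection, I would note that a simple cycle of $G'$ visits $v_2$ at most once, hence uses at most one of the newly created arcs (all of which point to $v_2$), and that reinserting $v_1$ between $u$ and $v_2$ inverts $\Psi$. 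The verification that $\Psi$ preserves sharing is the crux: $\Psi$ only removes $v_1$ from the vertex set of the cycles passing through it while keeping $v_2$, and any cycle meeting such a cycle at $v_1$ must itself pass through $v_1$ hence through $v_2$; so every adjacency formerly witnessed by $v_1$ is still witnessed by $v_2$, and conversely disjoint cycles stay disjoint since no vertex other than $v_1$ is ever removed.

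Finally, part~3 is dual to part~2, and I would deduce it by conjugating with the reversal of part~1. If all arcs into $v_2$ come from $v_1$, then in $G^R$ all arcs out of $v_2$ go to $v_1$, which is exactly the hypothesis of part~2 applied to the vertex $v_2$; checking that jumping commutes with reversal, i.e. $\mathrm{jump}_{v_2}(G^R)=(\mathrm{jump}_{v_2}(G))^R$, then chains the three equalities $\phi(\mathrm{jump}_{v_2}(G))=\phi((\mathrm{jump}_{v_2}(G))^R)=\phi(\mathrm{jump}_{v_2}(G^R))=\phi(G^R)=\phi(G)$ through parts~1 and~2. The main obstacle throughout is the bookkeeping in part~2, namely checking in both directions that $\Psi$ preserves the intersection relation (and respects arc multiplicities in the multidigraph setting); the single fact that makes each case go through, and that must be invoked repeatedly, is that every simple cycle through $v_1$ is forced through $v_2$.
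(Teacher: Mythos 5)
Your proposal is correct and takes essentially the same route as the paper's own proof: part~1 via the orientation-reversing bijection on simple cycles (which fixes all vertex sets), part~2 via the bijection that deletes $v_1$ from cycles through it and fixes the rest, hinging on the key fact that every simple cycle through $v_1$ is forced through $v_2$, and part~3 as a consequence of the first two. The paper's version is terser — it leaves implicit both the verification that the bijection preserves the vertex-sharing relation and the reversal-conjugation argument for part~3 — so your extra bookkeeping merely fills in details the authors omitted.
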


\begin{proof}~
    \begin{enumerate}
        \item Let $G'$ be the digraph obtained by reversing all the edges of $G$. Then there is a simple cycle $(v_1,\dots, v_n)$ in $G'$ if and only if there is a simple cycle
        $(v_n,\dots, v_1)$ in $G$, hence $G\sim_{\phi} G'$.
        \item Let $G'$ be the digraph obtained by jumping $v_1$. Since the only outgoing edge from $v_1$ are of the form $(v_1,v_2)$, every simple cycle of $G$ passing by $v_1$ must also 
        pass by $v_2$. Hence there is a bijection between the simple cycles of $G$ and $G'$ which sends simple cycles without $v_1$ on themselves and simple cycles with $v_1$ on the
        simple cycle obtained by removing $v_1$. Hence $G\sim_{\phi} G'$.
        \item The last item is a direct consequences of the previous two.
    \end{enumerate}
\end{proof}

Proposition~\ref{injectivity_transfo} permits a reduction of the problem of realizability to cubic graphs, that is digraphs where all vertices have total degree 3:

\begin{corollary}
\label{C:cubic_is_sufficient}
    A graph is realizable if and only if it is realizable by a cubic graph.
\end{corollary}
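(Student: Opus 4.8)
The forward direction is immediate: a cubic digraph is a digraph, so any $H$ realizable by a cubic digraph is realizable. The plan for the converse is to start from an arbitrary digraph $G$ realizing $H$ and rewrite it, vertex by vertex, into a digraph $G'$ in which every vertex has total degree $3$, preserving the hike monoid at each step; then $\phi(G')=\phi(G)=H$. The engine of the rewriting is Proposition~\ref{injectivity_transfo}: since each move listed there preserves the hike monoid, it relates two $\phi$-equivalent digraphs and may be run in either direction. In particular the \emph{inverse} of the jump of its second item is available, namely: given a vertex $v$ of in-degree at least $2$, detach two of its incoming edges onto a fresh vertex $m$ carrying a single outgoing edge $m\to v$; the result is $\phi$-equivalent to $G$ and $m$ has in-degree $2$, out-degree $1$. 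Symmetrically, the inverse of the third item detaches two outgoing edges of a vertex onto a fresh vertex of in-degree $1$ and out-degree $2$.

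The core construction replaces a vertex $v$ of in-degree $p$ and out-degree $q$ by a gadget of degree-$3$ vertices. First I would apply the inverse move above $p-1$ times to peel pairs of incoming edges off $v$ into a binary merge-tree of vertices of type $(2,1)$, lowering the in-degree of $v$ to $1$; then apply the symmetric move $q-1$ times to peel pairs of outgoing edges into a split-tree of vertices of type $(1,2)$, lowering the out-degree of $v$ to $1$. The residual $v$ now has type $(1,1)$, and a final jump (the second item of Proposition~\ref{injectivity_transfo}, whose hypothesis holds since $v$ has a unique outgoing edge) deletes it, splicing the merge-tree directly onto the split-tree. Performed on every vertex, this yields a digraph all of whose vertices are of type $(2,1)$ or $(1,2)$, hence of total degree $3$, and which is $\phi$-equivalent to $G$ because every individual move is. (If a simple digraph rather than a multidigraph is wanted, one starts from a simple $G$, for which the gadget introduces no parallel edges; the paper's convention identifying multidigraphs with digraphs makes this point cosmetic.)

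The main obstacle is reaching total degree \emph{exactly} $3$, i.e.\ ruling out leftover degree-$2$ vertices: a vertex lying on a cycle has in- and out-degree at least $1$, so it has degree $2$ precisely when it is of type $(1,1)$ — exactly the case in which the gadget degenerates and merely deletes $v$. I would handle this by a preprocessing pass. Vertices lying on no simple cycle can be removed outright, changing neither the simple cycles nor $\phi$; and every $(1,1)$-vertex can be jumped away by the second item of Proposition~\ref{injectivity_transfo}. The one situation this cannot resolve is a strongly connected component that is a bare directed cycle, which after repeated jumping collapses to a single self-loop and realizes a one-vertex component of $H$; such a component must be treated by an explicit cubic realization, for instance a directed triangle together with a sink vertex receiving exactly one arc from each of its three vertices (all vertices then have total degree $3$, and the triangle is the unique simple cycle). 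After this preprocessing every remaining strongly connected component has all vertices of total degree at least $3$, so the gadget applies; since $\phi$ of a digraph is the disjoint union of the $\phi$ of its strongly connected components, assembling the treated components produces the desired cubic $G'$ with $\phi(G')=H$.
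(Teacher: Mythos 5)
Your proof is correct and follows essentially the same route as the paper: the paper likewise expands each vertex of degree greater than $3$ into a chain of degree-$3$ vertices which, when jumped via Proposition~\ref{injectivity_transfo}, recover $G$, and it jumps away degree-$2$ vertices first --- your binary merge/split trees are the same gadget in tree rather than chain form, justified by the same run-the-jump-in-reverse argument. If anything, your preprocessing is more careful than the paper's proof, which passes silently over the degenerate case of a strongly connected component that is a bare directed cycle (a $K_1$ component of $H$, collapsing under jumps to a lone self-loop of total degree $2$); your explicit triangle-plus-sink realization supplies the cubic witness that this case requires.
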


\begin{proof}
    Let $H$ be a connected realizable graph and let $G=(V,E)$ be a strongly connected digraph realizing $H$ and $v$ be a vertex of $G$. Since $G$ is strongly connected, $v$ can not
    have a total degree $d(v)$ equal to $1$ and $d(v)=2$ if and only if $v$ has one in-going edge and one out-going edge. In this case, from Proposition~\ref{injectivity_transfo}, we
    know that we can jump $v$ while conserving $H$. Suppose now $d(v)>3$ and respectively denote by $u_0,\dots, u_{n-1}$ and $v_1,\dots, v_k$ the vertices in the sets 
    $\set{u\in V\,|\, (u,v)\in E}$ and $\set{u\in V\,|\, (v,u)\in E}$. We define the digraph $G'$ by transforming $G$ has follow (see Figure~\ref{F:cubic_is_sufficient} for an example):
    \begin{itemize}
        \item remove the vertex $v$ from $G$,
        \item add $n+k-2$ vertices $w_1,\dots,w_{n+k-2}$ and the edges $(w_i,w_{i+1})$ for $1\leq i\leq n+k-3$,
        \item add edges $(u_0,w_1)$, $(u_i,w_i)$ for $1\leq i\leq n-1$, $(w_{n+i-1}, v_i)$ for $1\leq i \leq k-1$ and $(w_{n+k-2},v_k)$.
    \end{itemize}
    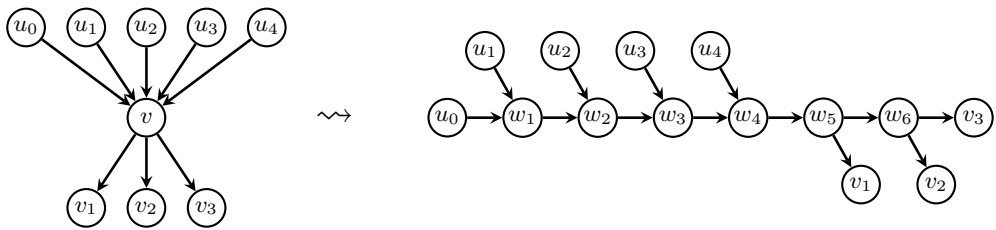
\begin{figure}[t!]
    \centering
    \begin{tikzpicture}[line width=1,Centering]
    \def\radi{0.8}
     \node[WhiteLabelNode](u0) at (-2*\radi,1.5*\radi){\small$u_0$};
     \node[WhiteLabelNode](u1) at (-\radi,1.5*\radi){\small$u_1$};
     \node[WhiteLabelNode](u2) at (0,1.5*\radi){\small$u_2$};
     \node[WhiteLabelNode](u3) at (\radi,1.5*\radi){\small$u_3$};
     \node[WhiteLabelNode](u4) at (2*\radi,1.5*\radi){\small$u_4$};
     \node[WhiteLabelNode](v) at (0,0){\small$v$};
     \node[WhiteLabelNode](v1) at (-\radi,-1.5*\radi){\small$v_1$};
     \node[WhiteLabelNode](v2) at (0,-1.5*\radi){\small$v_2$};
     \node[WhiteLabelNode](v3) at (\radi,-1.5*\radi){\small$v_3$};
    \draw[->](u0) to (v); 
    \draw[->](u1) to (v); 
    \draw[->](u2) to (v); 
    \draw[->](u3) to (v); 
    \draw[->](u4) to (v); 
    \draw[->](v) to (v1); 
    \draw[->](v) to (v2); 
    \draw[->](v) to (v3); 
    \node[NullNode](asso) at (2.5,0){\Large$\rightsquigarrow$};
    \begin{scope}[shift={(6,0)}]
    \def\radi{1}
    \node[WhiteLabelNode](u0) at (-2*\radi,0){\small$u_0$};
    \node[WhiteLabelNode](u1) at (-1.5*\radi,0.89*\radi){\small$u_1$};
    \node[WhiteLabelNode](u2) at (-0.5*\radi,0.89*\radi){\small$u_2$};
    \node[WhiteLabelNode](u3) at (0.5*\radi,0.89*\radi){\small$u_3$};
    \node[WhiteLabelNode](u4) at (1.5*\radi,0.89*\radi){\small$u_4$};
    \node[WhiteLabelNode](w1) at (-\radi,0){\small$w_1$};
    \node[WhiteLabelNode](w2) at (0,0){\small$w_2$};
    \node[WhiteLabelNode](w3) at (\radi,0){\small$w_3$};
    \node[WhiteLabelNode](w4) at (2*\radi,0){\small$w_4$};
    \node[WhiteLabelNode](w5) at (3*\radi,0){\small$w_5$};\node[WhiteLabelNode](w6) at (4*\radi,0){\small$w_6$};
    \node[WhiteLabelNode](v1) at (3.5*\radi,-0.89*\radi){\small$v_1$};
    \node[WhiteLabelNode](v2) at (4.5*\radi,-0.89*\radi){\small$v_2$};
    \node[WhiteLabelNode](v3) at (5*\radi,0){\small$v_3$};
    \draw[->](u0) to (w1);
    \draw[->](u1) to (w1);
    \draw[->](u2) to (w2);
    \draw[->](u3) to (w3);
    \draw[->](u4) to (w4);
    \draw[->](w5) to (v1);
    \draw[->](w6) to (v2);
    \draw[->](w6) to (v3);
    \draw[->](w1) to (w2);
    \draw[->](w2) to (w3);
    \draw[->](w3) to (w4);
    \draw[->](w4) to (w5);
    \draw[->](w5) to (w6);
    \end{scope}
    \end{tikzpicture}
    \caption{How to transform vertices and their adjacent edges so as to form a cubic graph: an illustration of Corollary~\ref{C:cubic_is_sufficient} with $n=4$ and $k=3$.}
    \label{F:cubic_is_sufficient}
\end{figure}
    By construction, vertices $w_1,\dots, w_{n+k-2}$ have total degree 3 and we can recover digraph $G$ by jumping them. Since by Proposition~\ref{injectivity_transfo} this
    preserves the hike monoid, $G'$ also realizes $H$.
    By applying the same transformation to every vertices of $G$, we obtain a cubic graph.
\end{proof}

Proposition~\ref{injectivity_transfo} give us a potential characterisations of 
$\phi$-equivalence: $G_1\sim_{\phi}G_2$ if and only if $G_1$ and $G_2$ reduce to the same digraph after recursively applying the transformations of Proposition~\ref{injectivity_transfo} until we obtain a
digraph where each vertex has in-degree and out-degree at least $2$. The digraph so obtained would then be a canonical representative of the $\phi$-equivalent class $\phi^{-1}(H)$ to which $G_1$ and $G_2$ belong. An alternative canonical representative would be a cubic graph realizing $H$, the existence of which is guaranteed by  Corollary~\ref{C:cubic_is_sufficient}. For example, all complete graphs on $n$ vertices $H=K_n$ are realizable, being realized by the bouquet graph $B_n$ with $n$ self-loops on a single vertex, itself irreducible under Proposition~\ref{injectivity_transfo}. Alternatively, the cubic ladder digraph $L_n$
 \vspace{-4mm}
    \begin{equation*}
        \begin{tikzpicture}[line width=1,Centering]
        \def\radi{0.9}
        \node[WhiteNode](b1) at (\radi,0){}; 
        \node[WhiteNode](b2) at (2*\radi,0){}; 
        \node[WhiteNode](b3) at (3*\radi,0){}; 
        \node[WhiteNode](b4) at (4*\radi,0){}; 
        \node[WhiteNode](t1) at (\radi,\radi){}; 
        \node[WhiteNode](t2) at (2*\radi,\radi){}; 
        \node[WhiteNode](t3) at (3*\radi,\radi){}; 
        \node[WhiteNode](t4) at (4*\radi,\radi){}; 
        \node[WhiteNode](l) at (\radi-0.86*\radi,0.5*\radi){}; 
        \node[WhiteNode](r) at (4*\radi+0.86*\radi,0.5*\radi){}; 
        \draw[->](b1) to (b2);
        \draw[dotted](b2) to (b3);
        \draw[->](b3) to (b4);
        \draw[->](t1) to (t2);
        \draw[dotted](t2) to (t3);
        \draw[->](t3) to (t4);
        \draw[->](b1) to (t1);
        \draw[->](b2) to (t2);
        \draw[->](b3) to (t3);
        \draw[->](b4) to (t4);
        \draw[->](l) to (b1);
        \draw[->](l) to (t1);
        \draw[->](b4) to (r);
        \draw[->](t4) to (r);
        \path[->](r) edge[out=90,in=90,distance=40] (l);
        \draw[decorate,decoration={brace,amplitude=5pt}] (4*\radi,-0.2) -- (\radi,-0.2);
        \node[NullNode](brace) at (2*\radi+0.5,-0.6){\footnotesize $(n-3)$ squares};
        \node[NullNode] at (-0.6,0.45){\small$L_n$=};
        \end{tikzpicture}
    \vspace{-6mm}
    \end{equation*}
satisfies $\phi(L_n)=K_n$ for $n\geq 3$, and $L_n$ reduces to $B_n$ under Proposition~\ref{injectivity_transfo}.

 Unfortunately while it is true that two digraphs reducing to the same digraph are necessarily $\phi$-equivalent, the converse proposition does not hold. For example the following
three digraphs are realizations of the complete graph on $17$ vertices $K_{17}$, yet do not reduce to the same digraph under the transformations listed above
\vspace{-7mm}
\begin{equation*}
    \begin{tikzpicture}[line width=1,Centering]
    \def\radi{0.5}
    \node[WhiteNode](a) at (30:\radi){};
    \node[WhiteNode](b) at (150:\radi){};
    \node[WhiteNode](c) at (270:\radi){};
    \node[WhiteNode](d) at (90:2*\radi){};
    \node[WhiteNode](e) at (210:2*\radi){};
    \node[WhiteNode](f) at (330:2*\radi){};
    \draw[->](a) to (b);
    \draw[->](b) to (c);
    \draw[->](c) to (a);
    \draw[->](d) to (a);
    \draw[->](d) to (b);
    \draw[->](b) to (e);
    \draw[->](e) to (c);
    \draw[->](c) to (f);
    \draw[->](a) to (f);
    \path[->](f) edge[bend right=40] (d);
    \path[->](f) edge[bend left=40] (e);
    \path[->](e) edge[bend left=40] (d);
    \begin{scope}[shift={(4,0.2)}]
    \def\radi{1}
    \node[WhiteNode](o) at (0,0){};
    \node[WhiteNode](a) at (45:\radi){};
    \node[WhiteNode](b) at (135:\radi){};
    \node[WhiteNode](c) at (225:\radi){};
    \node[WhiteNode](d) at (315:\radi){};
    \path[->](o) edge[bend right=30] (a);
    \path[->](a) edge[bend right=30] (o);
    \path[->](o) edge[bend right=30] (b);
    \path[->](b) edge[bend right=30] (o);
    \path[->](o) edge[bend right=30] (c);
    \path[->](c) edge[bend right=30] (o);
    \path[->](o) edge[bend right=30] (d);
    \path[->](d) edge[bend right=30] (o);
    \path[->](a) edge[bend right=30] (b);
    \path[->](b) edge[bend right=30] (c);
    \path[->](c) edge[bend right=30] (d);
    \path[->](d) edge[bend right=30] (a);
    \end{scope}
    \begin{scope}[shift={(8,0.2)}]
    \def\radi{1}
    \node[WhiteNode] (o) at (0,0){};
    \node[WhiteNode] (a1) at (-90:\radi){};
    \node[WhiteNode] (a2) at (0:\radi){};
    \node[WhiteNode] (a3) at (90:\radi){};
    \node[WhiteNode] (a4) at (180:\radi){};
    \draw[->](o) to (a3);
    \path[->](o) edge[bend right=25] (a2);
    \path[->](o) edge[bend right=25] (a1);
    \path[->](o) edge[bend right=25] (a4);
    \path[->](a2) edge[bend right=25] (o);
    \path[->](a4) edge[bend right=25] (o);
    \path[->](a1) edge[bend right=25] (o);
    \path[->](a1) edge[bend right=30] (a2);
    \path[->](a2) edge[bend right=30] (a3);
    \path[->](a3) edge[bend right=30] (a4);
    \path[->](a4) edge[bend right=30] (a1);
    \path[->](a2) edge[in=110,out=70,distance=45] (a4);
    \end{scope}
    \end{tikzpicture}
\end{equation*}
More generally we found that for most graph theoretic properties, we could construct a realization of $K_n$ that has  the property and another that does not have it. For example both of the following digraphs realize $K_{12}$
\vspace{-5mm}
\begin{equation*}
    \begin{tikzpicture}[line width=1,Centering]
    \begin{scope}[shift={(-.5,0)}]
    \def\radi{1}
    \node[WhiteNode](a) at (0:\radi){}; 
    \node[WhiteNode](b) at (0,0){}; 
    \node[WhiteNode](c) at (180:\radi){}; 
    \node[WhiteNode](d) at (60:\radi){}; 
    \node[WhiteNode](e) at (120:\radi){}; 
    \draw[->](a) to (d);
    \draw[->](a) to (b);
    \draw[->](d) to (b);
    \draw[->](d) to (e);
    \draw[->](b) to (c);
    \draw[->](e) to (c);
    \draw[->](b) to (e);
    \path[->](c) edge[bend right=40] (a);
    \path[<-](a) edge[out=80,in=55,distance=30] (e);
    \path[-,line width=4,color=white](c) edge[out=100,in=125,distance=30] (d);
    \path[->](c) edge[out=100,in=125,distance=30] (d);
    \node[NullNode] at (-1.6,0.5){\small$G_{1}$=};
    \end{scope}
    \begin{scope}[shift={(4,.5)}]
     \def\radi{1}
     \node[WhiteNode] (o) at (0,0){};
     \node[WhiteNode] (a1) at (-90:\radi){};
     \node[WhiteNode] (a2) at (0:\radi){};
     \node[WhiteNode] (a3) at (90:\radi){};
     \node[WhiteNode] (a4) at (180:\radi){};
     \draw[->](o) to (a1);
     \draw[->](o) to (a3);
     \path[->](o) edge[bend right=25] (a2);
     \path[->](o) edge[bend right=25] (a4);
     \path[->](a2) edge[bend right=25] (o);
     \path[->](a4) edge[bend right=25] (o);
     \path[->](a1) edge[bend right=30] (a2);
     \path[->](a2) edge[bend right=30] (a3);
     \path[->](a3) edge[bend right=30] (a4);
     \path[->](a4) edge[bend right=30] (a1);
     \path[->](a2) edge[in=110,out=70,distance=45] (a4);
     \node[NullNode] at (-1.6,0){\small$G_{2}$=};
     \end{scope}
    \end{tikzpicture}
\end{equation*}
yet $G_1$ is vertex-transitive while $G_2$ is not, $G_2$ is planar and $G_1$ is not; or consider the flower graph $F_n$ on $n$ petals
 \vspace{-7mm}
\begin{equation*}
\begin{tikzpicture}[line width=1,Centering]
\def\radi{1}
\node[WhiteNode] (o) at (0,0){};
\node[WhiteNode] (a) at (60:\radi){};
\node[WhiteNode] (b) at (140:\radi){};
\node[WhiteNode] (c) at (220:\radi){};
\node[WhiteNode] (d) at (300:\radi){};
\node[NullNode] (la) at (60:1.3*\radi){$n$};
\node[NullNode] (lb) at (140:1.3*\radi){$1$};
\node[NullNode] (lc) at (220:1.3*\radi){$2$};
\node[NullNode] (ld) at (300:1.3*\radi){$3$};
\path[->](a) edge[bend right=25] (b);
\path[->](b) edge[bend right=25] (c);
\path[->](c) edge[bend right=25] (d);
\path[->,dashed](d) edge[bend right=55] (a);
\path[->](o) edge[bend right=20] (a);
\path[->](a) edge[bend right=20] (o);
\path[->](o) edge[bend right=20] (b);
\path[->](b) edge[bend right=20] (o);
\path[->](o) edge[bend right=20] (c);
\path[->](c) edge[bend right=20] (o);
\path[->](o) edge[bend right=20] (d);
\path[->](d) edge[bend right=20] (o);
\draw[dotted] (320:0.7*\radi) arc (-40:40:0.7*\radi);
 \node[NullNode] at (-1.6,0){\small$F_{n}$=};
\end{tikzpicture}
\vspace{-5mm}
\end{equation*}
for which $\phi(F_n)=K_{n^2+1}$. Flower $F_n$ has a highly skewed degree distribution while that of the ladder $L_{n^2+1}$ is uniform; etc. This means that even considering the purportedly simpler case of the complete graphs $H=K_n$, we still find a profusion of  $\phi$-equivalent graphs unaccounted for by Proposition~\ref{injectivity_transfo} which moreover do not seem to be related to one another in any obvious way.
    
These observations show that to pursue a full characterisation of $\phi$-equivalence we need an exhaustive description of the hike dependency graphs of all graphs with in- and out-degrees at least $2$. We could not find such a description--and a plethora of cases make this task clearly daunting. For a realizable graph $H$ we do however have a lower bound and two upper bounds on the number of vertices and edges that are satisfied by at least one graph in $\phi^{-1}(H)$ {which provide some constraint on $\phi^{-1}(H)$. To describe these bounds consider $G=(V_G,E_G)$ a strongly connected digraph with in- and out-degrees at least $2$ and $H=(V_H,E_H)=\phi(G)$.}

First, as remarked in the previous section, every vertex of $G$ correspond to a clique in $H$, consequently $\card{V_G}$ must be at least the size of a minimal clique cover of $H$. 

Second, to give upper bounds on $\card{V_G}$ and $\card{E_G}$, we begin by briefly recalling the notion of ear decomposition of a graph while
we refer the reader to \cite{BJG} for a thorough discussion of this topic.
An {\em ear decomposition} of $G$ is a sequence $(G_1,\dots,G_n)$ of 
directed subgraphs of $G$, called {\em ears}, such that:
\begin{itemize}
    \item $G_1$ is a simple cycle,
    \item for $i>1$, $G_i$ is a path (possibly closed) with exactly its start and end vertices already present among the vertices of $G_j$ for $1\leq j<i$,
    \item $G = \bigcup G_i$.
\end{itemize}
\noindent Consider the procedure of constructing $G$ by starting with $\tilde{G}_1 = G_1$ and recursively constructing $\tilde{G}_i$ as 
$\tilde{G}_i = \tilde{G}_{i-1}\cup G_i$ for $i>1$. 
Since the first ear has as many vertices as edges and that the addition of every other ear adds exactly one more edge than vertices, the length of an ear decomposition is always equal to
$\card{E_G}-\card{V_G}+1$. Now remark that graphs $\tilde{G}_i$ are all strongly connected so that adding $G_{i+1}$ to $\tilde{G}_i$ forms at least one
additional cycle. Hence we have the inequality $\card{V_H} \geq \card{E_G} - \card{V_G} +1$. Since furthermore we can always reduce $G$ to have in- and out-degrees at least $2$, we
have that $\card{E_G} = \frac{1}{2}\sum_{v\in V_G} d_{in}(v)+d_{out}(v) \geq \frac{4}{2}\card{V_G} = 2\card{V_G}$ giving the upper bounds 
$\card{V_H} -1 \geq \card{V_G}$ and 
$2\card{V_H}-2\geq \card{E_G}$.

\subsection{Implications of sharing the same dependency graph}
\subsubsection{Simple graphs do not relate well with their simple cycles}
Let $H$ be a realizable graph. As we have seen so far, digraphs realizing $H$ may be very diverse, not even sharing basic graph theoretic properties such as regularity. By contrast, in this section we present some properties they must necessarily share by virtue of the isomorphism between their hike monoids. The first result on this matter was obtained in \cite{GiscardRochet2016} by regarding graphs as bidirected digraphs. 

\begin{theorem}[Giscard and Rochet, 2016 \cite{GiscardRochet2016}]\label{poset_charac} Let $G$ be a connected graph with no self-loop. Then the hike dependency graph $H$ of $G$ determines $G$ uniquely up to isomorphism, unless $G \in \{ K_3,K_{1,5} \}$. 
\end{theorem}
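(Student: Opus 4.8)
The plan is to reduce the reconstruction of $G$ from $H$ to Whitney's line‑graph isomorphism theorem. First I would determine the simple cycles of the bidirected digraph $\vec{G}$ associated with $G$. Since $G$ has no self‑loop these are exactly: (i) one \emph{$2$‑cycle} (digon) $d_e = u\to v\to u$ for each edge $e=\{u,v\}$ of $G$, and (ii) for each cycle $Z$ of $G$ of length at least $3$, its two orientations $Z^+,Z^-$. Two digons $d_e,d_f$ share a vertex of $\vec{G}$ if and only if the edges $e,f$ meet in $G$, so the subgraph of $H=\phi(G)$ induced on the digon‑vertices is precisely the line graph $L(G)$. The pair $Z^+,Z^-$ visits the same vertex set $V(Z)$, hence these two vertices have identical closed neighbourhoods in $H$: every cycle of length $\geq 3$ contributes a pair of \emph{true twins}.

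If the digon‑vertices can be singled out inside $H$, the induced subgraph on them is $L(G)$ and Whitney's theorem reconstructs $G$ up to isomorphism. The sole ambiguity in Whitney's theorem is the pair $K_3,K_{1,3}$, which share the line graph $K_3$; but that ambiguity is \emph{resolved} here, because $K_3$ contains a cycle of length $3$ and therefore contributes an extra twin pair $Z^\pm$ to $H$, whereas $K_{1,3}$ is a tree and contributes none. Thus $\phi(K_3)=K_5$ while $\phi(K_{1,3})=K_3$, and the two are distinguished. The substance of the proof is therefore the intrinsic identification of the digon‑vertices of $H$, together with a proof that this identification fails only for the claimed exceptions.

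The main obstacle is exactly this identification, and I would attack it through the true‑twin relation on the vertex set of $H$. Long cycles always occur as adjacent twin pairs $Z^\pm$, so after recording the twin classes one expects to recognise and peel off the cycle‑vertices, leaving $L(G)$; the difficulty is that \emph{digons} can themselves become twins in degenerate graphs, in which case the cycle‑vertices can no longer be separated cleanly. I would show that this degeneracy forces $H$ to be complete: a family of edges that pairwise intersect is, by the classical dichotomy, either a star $K_{1,n}$ or a triangle, so a complete digon‑subgraph $L(G)=K_n$ arises only from $G=K_{1,n}$, or, for $n=3$, additionally from $G=K_3$. Since a star is a tree it contributes no cycle‑vertices, giving $\phi(K_{1,n})=K_n$, whereas the triangle contributes the pair $Z^\pm$, giving $\phi(K_3)=K_5$. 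Comparing these shows that $\phi(K_3)=\phi(K_{1,5})=K_5$ is the unique coincidence, while every other connected $G$ without self‑loops is reconstructed unambiguously. The delicate step I expect to carry the weight of the argument is proving that identification can fail \emph{only} when $H$ is complete, i.e.\ that for non‑complete $H$ the line‑graph part and the cycle part of $H$ cannot be confused, so that $K_5$ is the only collision.
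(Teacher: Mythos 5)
Your outline coincides with the strategy the paper attributes to Giscard and Rochet: the backtracks (your digons) induce the line graph $L(G)$ inside $H=\phi(G)$, Whitney's theorem then reconstructs $G$, and the single collision is $\phi(K_3)=\phi(K_{1,5})=K_5$, which also absorbs Whitney's $K_3$/$K_{1,3}$ ambiguity exactly as you observe. The gap lies in the step you yourself flag as carrying the weight. Your pivotal lemma --- that digons can become true twins (so that cycle vertices can no longer be separated from digons) \emph{only when $H$ is complete} --- is false. Take $G$ to be the paw: a triangle on $\{1,2,3\}$ with a pendant edge $\{3,4\}$. Its simple cycles are the four digons $d_{12},d_{23},d_{13},d_{34}$ and the two orientations $Z^{+},Z^{-}$ of the triangle, so $H$ is $K_6$ minus the single edge $\{d_{12},d_{34}\}$, which is not complete. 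Yet $d_{13}$, $d_{23}$, $Z^{+}$, $Z^{-}$ are pairwise true twins, each being adjacent to every other vertex of $H$; the twin relation thus mixes two digons with the cycle pair. Moreover the digon set is genuinely not determined by $H$: the alternative choice $\{d_{12},d_{34},d_{13},Z^{+}\}$ also induces $K_4$ minus an edge, which is a line graph (indeed $L(\text{paw})$ itself), with the leftover pair $\{d_{23},Z^{-}\}$ available to play the role of the twin cycle pair.

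Consequently your peeling procedure does not intrinsically identify the digon vertices, and the claimed dichotomy (non-complete $H$ implies the line-graph part and the cycle part cannot be confused) fails. What survives, and what the actual proof must establish, is the weaker invariance statement: every admissible decomposition of $H$ into a digon set inducing a line graph plus twin cycle pairs yields the \emph{same} $G$ up to isomorphism. In the paw example this holds because the ambiguous vertices $d_{23}$ and $Z^{+}$ are exchanged by an automorphism of $H$, so both decompositions reconstruct the paw; but nothing in your argument delivers this in general. As written, your proof would conclude unique reconstruction for every non-complete $H$ from a premise the paw already refutes, so the delicate step must be restructured around uniqueness of the reconstruction across all admissible digon sets rather than uniqueness of the digon set itself.
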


{While this result seems to be quite powerful at first glance, the seemingly harmless assumption that the graph is bidirected (i.e., effectively undirected) is essential. Should even a single edge be directed the theorem would completely fail to hold. Indeed,} the proof relies exclusively on simple cycles of length 2 (called backtracks) sustained by every undirected edge $\{i,j\}$ on $G$. Vertices of $H$ corresponding to backtracks in $G$ can always be identified even though the lengths of the simple cycles corresponding to the vertices of $H$ are not known (with the single exception $H=K_5$ leading to the $\{K_3,K_{1,5}\}$ special case). Now the induced subgraph of $H$ formed by the backtracks is the line graph $L(G)$ of $G$, which is well known to determine $G$ uniquely. In other terms, the theorem makes no use of longer simple cycles, so much so that if the information regarding the backtracks is discarded from $H$, that is we only have access to $\phi(G)\backslash L(G)$, then graphs suffer from exactly the same shortfalls as directed graphs when it comes to relating them to their closed walks. That is, 
\begin{itemize}
    \item There are graphs $G$ and $G'$ with $\phi(G)/L(G) = \phi(G')/L(G')$ and $G$ and $G'$ do not share fundamental graph theoretic properties. See Figure~\ref{F:line_graph} below.
    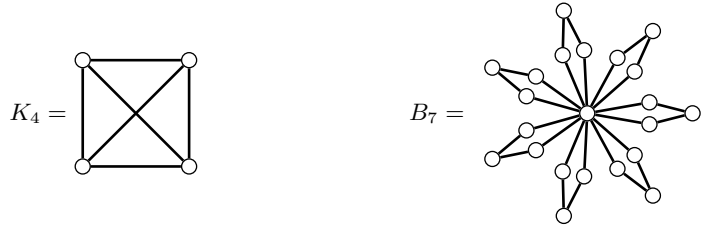
\begin{figure}[h!]
    \centering
    \begin{tikzpicture}[Centering,line width=1]
        \def\radi{1}
        \node[WhiteNode](1) at (45:\radi){};
        \node[WhiteNode](2) at (135:\radi){};
        \node[WhiteNode](3) at (225:\radi){};
        \node[WhiteNode](4) at (315:\radi){};
        \node[NullNode](l) at (-1.3,0){\small$K_4=$};
        \draw(1) to (2);
        \draw(1) to (3);
        \draw(1) to (4);
        \draw(2) to (3);
        \draw(2) to (4);
        \draw(3) to (4);
        \begin{scope}[shift={(6,0)}]
        \def\alpha{360/7}
        \def\radi{1.4}
        \def\radii{0.6*\radi}
        \def\beta{10}
        \node[WhiteNode](o) at (0,0){};
        \node[WhiteNode](a1) at (0:\radi){};
        \node[WhiteNode](a2) at (\alpha:\radi){};
        \node[WhiteNode](a3) at (2*\alpha:\radi){};
        \node[WhiteNode](a4) at (3*\alpha:\radi){};
        \node[WhiteNode](a5) at (4*\alpha:\radi){};
        \node[WhiteNode](a6) at (5*\alpha:\radi){};
        \node[WhiteNode](a7) at (6*\alpha:\radi){};
        \node[WhiteNode](b1) at (\beta:\radii){};
        \node[WhiteNode](b2) at (\alpha+\beta:\radii){};
        \node[WhiteNode](b3) at (2*\alpha+\beta:\radii){};
        \node[WhiteNode](b4) at (3*\alpha+\beta:\radii){};
        \node[WhiteNode](b5) at (4*\alpha+\beta:\radii){};
        \node[WhiteNode](b6) at (5*\alpha+\beta:\radii){};
        \node[WhiteNode](b7) at (6*\alpha+\beta:\radii){};
        \node[WhiteNode](c1) at (-\beta:\radii){};
        \node[WhiteNode](c2) at (\alpha-\beta:\radii){};
        \node[WhiteNode](c3) at (2*\alpha-\beta:\radii){};
        \node[WhiteNode](c4) at (3*\alpha-\beta:\radii){};
        \node[WhiteNode](c5) at (4*\alpha-\beta:\radii){};
        \node[WhiteNode](c6) at (5*\alpha-\beta:\radii){};
        \node[WhiteNode](c7) at (6*\alpha-\beta:\radii){};
        \draw(o) to (b1);
        \draw(o) to (b2);
        \draw(o) to (b3);
        \draw(o) to (b4);
        \draw(o) to (b5);
        \draw(o) to (b6);
        \draw(o) to (b7);
        \draw(o) to (c1);
        \draw(o) to (c2);
        \draw(o) to (c3);
        \draw(o) to (c4);
        \draw(o) to (c5);
        \draw(o) to (c6);
        \draw(o) to (c7);
        \draw(b1) to (a1);
        \draw(b2) to (a2);
        \draw(b3) to (a3);
        \draw(b4) to (a4);
        \draw(b5) to (a5);
        \draw(b6) to (a6);
        \draw(b7) to (a7);
        \draw(c1) to (a1);
        \draw(c2) to (a2);
        \draw(c3) to (a3);
        \draw(c4) to (a4);
        \draw(c5) to (a5);
        \draw(c6) to (a6);
        \draw(c7) to (a7);
        \node[NullNode](l) at (-2,0){\small $B_7=$};
        \end{scope}
        \end{tikzpicture}
    \caption{A pair of graphs with $\phi(K_4)/L(K_4) = \phi(B_7)/L(B_7)$, graph $K_4$ on the left is Hamiltonian and vertex-transitive but not bipartite unlike the bouquet graph $B_7$ on the right, which is bipartite and neither Hamiltonian nor vertex-transitive.}
    \label{F:line_graph}
    \end{figure}
    \item Given a graph $H$, deciding whether there exists a graph $G$ such that $H=\phi(G)/L(G)$ is widely open. 
\end{itemize}
These issues are pressing since many tools of modern network analysis and algebraic graph theory explicitly disregard or forbid backtracks. This includes backtrackless walks methods \cite{Aziz2013}, Ihara zeta function and primitive orbits \cite{Terras2011} and open problems about graphs and cycles, foremost among which is the cycle double cover conjecture \cite{Jaeger1985}. 

\subsubsection{Invariants of hike monoids}
Recall from \S\ref{subsec:monoids} that for a digraph $G$, $H=\phi(G)$ is the dependency graph of its hike monoid, which is the trace monoid generated by its simple cycles under the partial commutation rule that two cycles commute if and only if they share no vertex.

Given that isomorphisms of hike monoids implement profound changes on digraphs, it is worth investigating quantities left invariant by these isomorphisms. The first type of such invariants are algebraic quantities based on the adjacency matrix. 
\begin{proposition}\label{Invariants1}
Let $G$ and $G'$ be two $\phi$-equivalent digraphs and let $\mat{A}$ and $\mat{A}'$ be their adjacency matrices, respectively. 
Then:
\begin{enumerate}
  \item[1)] $\det(\mat{I}-\mat{A})=\det(\mat{I}'-\mat{A}')$,
  \item[2)] $\perm(\mat{I}+\mat{A})=\perm(\mat{I}'+\mat{A}')$,
\end{enumerate}
where $\mat{I}$ and $\mat{I}'$ are identity matrices of appropriate sizes. 
\end{proposition}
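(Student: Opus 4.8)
The plan is to reduce both identities to a single combinatorial fact: each of $\det(\mat{I}-\mat{A})$ and $\perm(\mat{I}+\mat{A})$ is a specialization of the \emph{independence polynomial} of $H=\phi(G)$, a quantity that depends only on the graph $H$ and is therefore automatically equal for two $\phi$-equivalent digraphs (for which $\phi(G)=\phi(G')=H$ by definition).

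First I would establish two cycle expansions. Writing $\det(\mat{I}-\mat{A})=\sum_{\sigma}\operatorname{sgn}(\sigma)\prod_i(\mat{I}-\mat{A})_{i,\sigma(i)}$ and splitting each diagonal factor $(1-\mat{A}_{ii})$ into its two summands, every surviving term is indexed by a \emph{linear subdigraph} $L$ of $G$, that is, a collection of pairwise vertex-disjoint directed simple cycles, self-loops counting as length-one cycles. Let $U$ be the support of $L$, $\pi$ the underlying permutation of $U$, $c(L)$ the number of its cycles, and $w(L)=\prod_{u\in U}\mat{A}_{u,\pi(u)}$ the product of arc multiplicities used by $L$. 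A short sign computation combines $\operatorname{sgn}(\pi)=(-1)^{|U|-c(L)}$ with the factor $(-1)^{|U|}$ coming from the selected entries to give
\[
\det(\mat{I}-\mat{A})=\sum_{L}(-1)^{c(L)}\,w(L).
\]
The identical expansion applied to the permanent, where no signs intervene and the diagonal factors are $(1+\mat{A}_{ii})$, yields $\perm(\mat{I}+\mat{A})=\sum_{L}w(L)$.

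Next I would reinterpret these sums through the hike monoid. A linear subdigraph is exactly a set of pairwise vertex-disjoint simple cycles, and on resolving each weight $w(L)=\prod_u \mat{A}_{u,\pi(u)}$ into a choice of an individual arc at every step, each term of $\sum_L w(L)$ corresponds bijectively to a set of pairwise vertex-disjoint \emph{actual} simple cycles of $G$, i.e.\ to an independent set $I$ of $H=\phi(G)$ (equivalently, a clique of mutually commuting generators of $\mathcal{H}_G$ in the Cartier--Foata sense, since edges of $H$ join non-commuting generators). Under this bijection $c(L)=|I|$. Writing $I_H(x)=\sum_{I\text{ independent in }H}x^{|I|}$ for the independence polynomial of $H$, the two expansions become
\[
\det(\mat{I}-\mat{A})=I_H(-1)\qquad\text{and}\qquad\perm(\mat{I}+\mat{A})=I_H(1),
\]
and since $G\sim_\phi G'$ forces $\phi(G)=\phi(G')$, the right-hand sides agree for $G$ and $G'$, proving 1) and 2).

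The routine part is the permutation-expansion bookkeeping, including the treatment of the diagonal $\mat{I}$. The only genuinely delicate point is the multiplicity/weight correspondence in the multidigraph case: one must verify that expanding $w(L)$ over parallel-arc choices reproduces precisely the independent sets of $H$ (one vertex of $H$ per simple cycle, counting parallel copies separately), with $c(L)$ matching $|I|$ so that the signs $(-1)^{c(L)}=(-1)^{|I|}$ line up. Once this is checked the invariance is immediate; it is also reassuring that it recovers the classical Cartier--Foata identity $1/\det(\mat{I}-\mat{A})=\sum_{h\in\mathcal{H}_G}w(h)$, whose denominator is exactly the alternating generating function of the trace-monoid cliques, i.e.\ $I_H(-1)$.
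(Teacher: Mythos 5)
Your proof is correct and follows essentially the same route as the paper's: both arguments reduce the two identities to the observation that the surviving terms of $\det(\mat{I}-\mat{A})$ and $\perm(\mat{I}+\mat{A})$ enumerate, with sign $(-1)^{c}$ and weight $1$ respectively, the families of pairwise vertex-disjoint simple cycles of $G$ --- the paper's self-avoiding hikes, your independent sets of $H$ (equivalently cliques of $H^c$) --- a datum determined by $H=\phi(G)$ alone and hence shared by $\phi$-equivalent digraphs. The only cosmetic difference is that you derive the cycle expansion by direct permutation bookkeeping and package the conclusion as the independence-polynomial specializations $I_H(-1)$ and $I_H(1)$, whereas the paper simply invokes the classical Cartier--Foata/Viennot identity $\sum_{h~\text{s.a.}}(-1)^{\Omega(h)}=\det(\mat{I}-\mat{A})$.
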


\begin{proof}
A self-avoiding hike is a hike $h=c_1\cdots c_k$ in which all simple cycles $c_j$ are vertex-disjoint, hence commute. Consequently, a hike is self-avoiding if and only if the set $\{c_1,\hdots,c_k\}$ forms a clique in the complement graph $H^c$ of the hike dependency graph. It follows that there is a bijection $\psi$ between the sets of self-avoiding hikes on two $\phi$-equivalent digraphs that preserves the number of simple cycles in each self-avoiding hike. From the definition of the matrix determinant in terms of permutations and the representation of permutations as disjoint cycles we have 
$$
\sum_{h~\text{s.a.}}(-1)^{\Omega(h)} = \det(\mat{I}-\mat{A}),
$$
where the sum runs over \underline{s}elf-\underline{a}voiding hikes, $\Omega(h)$ denotes the number of simple cycles in $h$ and thus $(-1)^{\Omega(h)}$ is the signature of the permutation with cycle decomposition $h$ {(see \cite{viennot1989heaps, cartier1969})}. The sum is left unchanged by the monoid isomorphism $\psi$, which entails assertion 1) {Indeed, self-avoiding hikes are cliques in the independency graph $H^c$ which is invariant under $\psi$ so both the sizes of these cliques (the number $\Omega(h)$) as well as their total number is preserved by $\psi$. This last observation implies  assertion 2) as 
$
\sum_{h~\text{s.a.}}1 =\perm(\mat{I}+\mat{A})
$.}
\end{proof}

\begin{remark}
{The list of algebraic invariants provided in Proposition~\ref{Invariants1} is \emph{not} exhaustive, many more invariants can for example be inferred from \cite{Giscard2018AHA}. To see this consider two algebraic invariants of hike monoids $a(G)$ and $b(G)$ defined on a digraph $G$ and all its induced subgraphs $g\prec G$. Then the induced subgraph convolution $(a\ast b)(G)=\sum_{g\prec G}a(g)b(G\backslash g)$ is also an algebraic invariant of hike monoids. In other terms if $G$ and $G'$ are two $\phi$-equivalent digraphs, then $a(G)=a(G')$, $b(G)=b(G')$ and $(a\ast b)(G)=(a \ast b)(G')$. As an instance of this we unify and generalise assertions 1) and 2) with $\text{perm}(\mathbf{I}+\mathbf{A})^{\ast k}$, the $k$th-induced subgraph convolution of the permanent with itself, which is therefore an invariant of hike monoids for all $k\in \mathbb{Z}$  (the Hopf algebraic relation $\text{perm}(\mathbf{I}+\mathbf{A})^{\ast-1}=\det(\mathbf{I}+\mathbf{A})$ gives meaning to the negative $\ast$ powers of the permanent \cite{Giscard2018AHA}).
We may verify this claim directly by writing \cite{Giscard2018AHA}
$
 \text{perm}(\mat{I}+\mat{A})^{\ast k} = \sum_{h~\text{s.a.}}k^{\Omega(h)}
$ and reasoning as in the proof of Proposition \ref{Invariants1}.}

If the isomorphism of hike monoids between $\mathcal{H}_G$ and $\mathcal{H}_{G'}$ is length preserving, then all algebraic quantities computable from their adjacency matrices are identical (e.g. degree sequence, graph spectra, permanental polynomial $\perm(\mat{I}-z\mat{A})$ etc).  Allowing for the addition of transient vertices it is always possible to construct a pair of digraphs  related by a length-preserving isomorphism of hike monoids from a pair of digraphs related by an isomorphism that does not preserve the length. This gives a systematic mean of generating cospectral pairs, although the digraphs so generated are much more similar than just cospectral (remark also that there are pairs of cospectral graphs do not share the same hike dependency graph). Below is an example of digraphs related by such a morphism:
\vspace{-3mm}
\begin{equation*}
\begin{tikzpicture}[Centering,line width=1]
\def\radi{1.2}
\begin{scope}[shift={(-2,0)}]
\node[WhiteNode] (1) at (0,0){}; 
\node[WhiteNode] (2) at (1,0){}; 
\node[WhiteNode] (3) at (2,.6){}; 
\node[WhiteNode] (4) at (2,-.6){}; 
\node[WhiteNode] (5) at (1.2,1){}; 
\node[WhiteNode] (6) at (1.2,-1){}; 
\node[WhiteNode] (7) at (2.7,0){}; 
\node[WhiteNode] (8) at (3.7,0){}; 
\draw[->](2) to (5);
\draw[->](5) to (3);
\draw[->](3) to (2);
\draw[->](2) to (6);
\path[<-](3) edge[bend left=15] (6);
\draw[->](7) to (3);
\draw[->](4) to (7);
\path[->](5) edge[out=120,in=60,distance=20] (5);
\path[->](6) edge[out=-120,in=-60,distance=20] (6);
\path[->](7) edge[bend left=20] (8);
\path[->](8) edge[bend left=20] (7);
\path[->](1) edge[bend left=20] (2);
\path[->](2) edge[bend left=20] (1);
\draw[-,line width=4,color=white](2) to (4);
\draw[->](2) to (4);
\node[NullNode] (n) at (-.7,0){\small$G_1=$};
\end{scope}
\begin{scope}[shift={(6,0)}]
\def\radi{1.2}
\node[WhiteNode] (1) at (0,0){};
\node[WhiteNode] (2) at (.7,1.2){};
\node[WhiteNode] (3) at (1.3,.4){};
\node[WhiteNode] (4) at (.7,-1.2){};
\node[WhiteNode] (5) at (1.3,-.4){};
\node[WhiteNode] (6) at (-.7,-.6){};
\node[WhiteNode] (7) at (-1.4,0){};
\node[WhiteNode] (8) at (-.7,.6){};
\draw[->](1) to (3);
\draw[->](3) to (2);
\draw[->](4) to (5);
\draw[->](5) to (1);
\draw[->](8) to (1);
\draw[->](1) to (6);
\draw[->](7) to (8);
\path[->](1) edge[bend left=20] (2);
\path[->](2) edge[bend left=20] (1);
\path[<-](4) edge[bend left=20] (1);
\path[->](6) edge[bend left=20] (7);
\path[->](7) edge[bend left=20] (6);
\path[->](5) edge[out=-30,in=30,distance=20] (5);
\path[->](3) edge[out=30,in=-30,distance=20] (3);
\node[NullNode] (n) at (-2.1,0){\small$G_2=$};
\end{scope}
\end{tikzpicture}
\end{equation*}
\end{remark}

The second type of invariant are combinatorial invariants sensitive to how walks and hikes are composed of cycles:
\begin{proposition}\label{CombiInvar}
Let $G=(V_G,E_G)$ be a digraph, $\mathcal{H}_G$ its hike monoid and  $\mathcal{W}_G\subset \mathcal{H}_G$ be the set of walks in $\mathcal{H}_G$, that is the set of hikes $h=c_1\cdots c_k\in \mathcal{H}_G$ with a unique right simple cycle $c_k$. Let $\Omega(h)$
be the number of simple cycles in a hike $h$. 
Let $f_{\mathcal{H}_G}^{\Omega}(z):=\sum_{h\in \mathcal{H}_G} z^{\Omega(h)}$, 
$f_{\mathcal{W}_G}^{\Omega}(z):=\sum_{h\in \mathcal{W}_G} z^{\Omega(h)}$, 
be the associated ordinary generating functions on hikes and walks, 
respectively. \\ 
Then $G\sim_\Phi G'$ implies
\begin{enumerate}
  \item $f_{\mathcal{H}_G}^{\Omega}(z)=f_{\mathcal{H}_{G'}}^{\Omega}(z)$;
  \item $f_{\mathcal{W}_G}^{\Omega}(z)=f_{\mathcal{W}_{G'}}^{\Omega}(z)$.
  \end{enumerate}
  Now consider the set $\mathcal{W}_{G:i\to j}$ of rooted walks $w$ from vertex $i$ to vertex $j$ in $G$. Let $\Omega(w)$ designate the number of loops erased from $w$ following Lawler's loop erasing procedure \cite{lawler1987loop}. Let $\sigma(.)$ be an additive function on hikes.
  Then $G\sim_\Phi G'$ implies that for all $i,j\in V_G$ there exists $i',j'\in V_{G'}$ with
  \begin{enumerate}
   \setcounter{enumi}{2}
  \item $f_{\mathcal{W}_{G:i\to j}}^{\Omega}(z)=f_{\mathcal{W}_{G':i'\to j'}}^{\Omega}(z)$;
  \item The shape of the branched continued fraction representation of the generating series associated with an additive function $\sigma$ defined on rooted walks from $i$ to $j$ on $G$ is the same as that representing the generating function of $\sigma$ on rooted walks from $i'$ to $j'$ on $G'$.
\end{enumerate}
\end{proposition}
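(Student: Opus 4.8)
The plan is to exploit the isomorphism of hike monoids that accompanies the hypothesis $G \sim_\Phi G'$. Since $\phi(G) = \phi(G') = H$, there is a graph isomorphism $\theta\colon \phi(G) \to \phi(G')$, and because $\mathcal{H}_G \simeq \mathcal{C}^\ast/\mathcal{I}_{\text{Hike}}$ is a trace monoid on the alphabet of simple cycles (the vertices of $H$), $\theta$ lifts to a monoid isomorphism $\psi\colon \mathcal{H}_G \to \mathcal{H}_{G'}$ carrying each generator to a generator and preserving the independence relation $\mathcal{I}_{\text{Hike}}$ (the non-edges of $H$). Two facts about $\psi$ drive the whole proof. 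First, since $\psi$ maps the generating set bijectively onto the generating set, it preserves the length of any representative word, i.e. $\Omega(\psi(h)) = \Omega(h)$ for every hike $h$. Second, $\psi$ is an isomorphism of the underlying heaps-of-pieces orders, so it preserves the set of maximal pieces of a hike; in particular $h$ has a unique rightmost simple cycle if and only if $\psi(h)$ does, whence $\psi$ restricts to an $\Omega$-preserving bijection $\mathcal{W}_G \to \mathcal{W}_{G'}$. Summing $z^{\Omega}$ over these bijections establishes assertions 1 and 2 at once.

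For assertions 3 and 4 I would first recall the loop-erasure dictionary between rooted walks and hikes. By Lawler's procedure a rooted walk $w$ from $i$ to $j$ factors uniquely into its loop-erased self-avoiding path $p(w)$ from $i$ to $j$ and the ordered family of simple cycles erased along the way, with $\Omega(w)$ exactly the number of these cycles; equivalently, the path-sum expansion writes $f^{\Omega}_{\mathcal{W}_{G:i\to j}}(z)$ as a sum over self-avoiding paths $p$ from $i$ to $j$ of a product of local factors, each counting by $z^{\Omega}$ the closed walks (walk-hikes) attachable at a vertex of $p$ using only simple cycles disjoint from the remainder of $p$. The key observation is that every ingredient of this expansion—the available simple cycles, their mutual incidences, and the incidences between cycles and path vertices—is data carried by $H$ together with the cliques $\kappa_i, \kappa_j$ marking the endpoints.

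Assertion 4 is then the structural heart of the matter, and I would obtain assertion 3 as its specialization. The branched continued fraction representing the generating series of an additive statistic $\sigma$ on rooted walks from $i$ to $j$ is built by the recursive path-sum: one records the simple paths from the current source to $j$, dresses each visited vertex by the generating series of closed walks on the graph with the already-used vertices deleted, and recurses. The \emph{shape} of the resulting fraction—its nesting tree of simple paths and simple cycles—is dictated solely by which simple cycles and paths exist and how they overlap, i.e. by $H = \phi(G)$ and the starting data. Since $\psi$ preserves exactly this incidence structure, the recursion it induces on the $G'$ side produces a branched continued fraction of identical shape; choosing $i', j'$ to be vertices of $G'$ realizing the transported root gives assertion 4, and substituting $\sigma = \Omega$ (additive under concatenation) recovers assertion 3 for the same $i', j'$ via the length-preservation of $\psi$.

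The main obstacle will be the identification of the root vertices $i', j'$. The naive choice—asking that the clique $\theta(\kappa_i)$ be the clique $\kappa_{i'}$ of cycles through a single vertex $i'$ of $G'$—fails, because $\psi$ carries $\kappa_i$ to a clique of pairwise-intersecting cycles that need not share a common vertex in $G'$; for instance $G = B_n$ and $G' = L_n$ both realize $K_n$, yet no vertex of the ladder lies on all its simple cycles. This is precisely why the statement is only existential and permits $i' \neq j'$ even when $i = j$: the correct roots must be read off from the transported path-sum rather than from a vertex-clique correspondence. Establishing the loop-erasure/path-sum dictionary in a form robust enough to survive this re-rooting—so that the transported branched continued fraction is certified to be a legitimate rooted path-sum of $G'$—is where the real work lies; once this is in place, the length- and order-preservation of $\psi$ make the weighted counts by $z^{\Omega}$, and more generally by any additive $\sigma$, automatic.
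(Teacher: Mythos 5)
Your handling of assertions 1 and 2 coincides with the paper's: the shared dependency graph induces an isomorphism $\psi$ of hike monoids sending generators to generators, hence preserving $\Omega$, and preserving the property of having a unique rightmost simple cycle, so that $\psi$ restricts to an $\Omega$-preserving bijection between walk sets; summing $z^{\Omega}$ gives both identities, exactly as in the paper (which dispatches this in one line). For assertions 3 and 4 you also adopt the paper's strategy---path-sum expansion, shape governed by cycle-incidence data, assertion 3 as the specialization $\sigma=\Omega$---but you stop precisely at the crux: you declare that identifying $i',j'$ and certifying that the transported continued fraction is a legitimate rooted path-sum on $G'$ is ``where the real work lies,'' and you never do that work. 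A proof cannot defer its own decisive step, so assertions 3 and 4 remain unproven in your text. The paper closes this step with a structural observation you do not make: each term of the path-sum corresponds to a rooted simple cycle, the fraction branches when several rooted cycles share a root, gains depth when a cycle's root lies inside another rooted cycle, and gains breadth when several distinct roots lie inside one cycle, so that the shape of the entire fraction is the spanning tree of $H$ whose level-one vertices are the simple cycles crossing the root. Since $\phi$-equivalent digraphs share $H$, this shape is an invariant (assertion 4); and because the choice $\sigma=\Omega$ with $\Omega(c)=1$ for every simple cycle $c$ makes each term contribute the same factor $z$ irrespective of which cycle it is, the \emph{value} of the fraction depends on its shape alone, which yields the numerical equality of assertion 3 without any finer matching of roots.

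Separately, the example you offer to motivate the obstacle is factually wrong. In the ladder $L_n$ the only edge oriented right-to-left is the return edge from the rightmost vertex $r$ to the leftmost vertex $l$; every directed cycle must use it, so \emph{every} simple cycle of $L_n$ passes through both $l$ and $r$, giving $\kappa_l=\kappa_r=K_n$ exactly as for the bouquet vertex of $B_n$ (consistent with the paper's remark that $L_n$ reduces to $B_n$ by jumping vertices). Your underlying worry is nonetheless legitimate in general: a clique of pairwise-intersecting simple cycles need not have a common vertex, and the paper's flower graph $F_n$---whose outer ring avoids the center while distinct backtracks meet only at the center---realizes $K_{n^2+1}$ with no vertex lying on all simple cycles, so the naive vertex-clique transport can indeed fail; that graph, not the ladder, would have made your point. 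But the paper's proof of Proposition~\ref{CombiInvar} reduces everything to the shape of the spanning tree of $H$ and derives its invariance from the preservation of $H$; terse as that is, it commits to the reduction that settles the claim, whereas your proposal, which leans on re-rooting the transported fraction, is the one left owing an argument.
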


\begin{proof}
Claims 1. and 2. follow immediately from the definitions of $f_{\mathcal{H}_G}^{\Omega}(z)$ and $f_{\mathcal{W}_G}^{\Omega}(z)$ as sums over monoid elements. Since $\mathcal{H}_G$ and $\mathcal{H}_{G'}$ are isomorphic, these functions are the same on $G$ and $G'$ provided $\Omega(.)$ and $\omega(.)$ take on the same value over a hike in $\mathcal{H}_G$ and its counterpart under isomorphism in $\mathcal{H}_{G'}$, which is clearly true.

For assertions 3 and 4, consider $\sigma:\mathcal{W}_G \to \mathbb{R}$ an additive function on walks, i.e., for $w\in\mathcal{W}_G$ with $w=c_1\cdots c_k$ we have $\sigma(w) = \sum_{i=1}^k \sigma(c_i)$. Function $\sigma$ extends naturally to rooted walks so we can consider generating function 
$
f_{\mathcal{W}_{G:i\to j}}^{\sigma}(z):=\sum_{h\in \mathcal{W}_{G:i\to j}} z^{\sigma(h)}$.
Any such sum over all rooted walks has a unique representation as a branched continued fraction of finite depth and breadth called a path-sum \cite{giscard2012continued}. Each term of a path-sum corresponds to a rooted simple cycle on $G$. The continued fraction gains a branch if two or more rooted simple cycles share the same root. A branch gains depth when a simple cycle's root is located inside of another rooted simple cycle. A branch gains breadth when several simple cycles' roots are distinct and located inside of another simple cycle. Overall this implies that the shape of the path-sum is that of the spanning tree of $H$ whose level one vertices are the simple cycles crossing vertex $i$. Consequently, this shape is an invariant of isomorphisms of hike monoids since they preserve $H$. Assertion 3 follows from the peculiar choice $\sigma = \Omega$ and $\Omega(c)=1$ for any simple cycle $c\in\mathcal{C}$.
\end{proof}
\begin{example}
Consider the following two graphs with identical hike dependency graph:
\vspace{-2mm}
\begin{equation*}
\begin{tikzpicture}[line width=1,Centering]
\def\radi{0.8}
\node[WhiteNode] (a) at (300:\radi){};
\node[MedGrayNode] (b) at (60:\radi){};
\node[WhiteNode] (c) at (180:\radi){};
\node[WhiteNode] (d) at (180:2.5*\radi){};
\path[->](a) edge[bend right=20] (b);
\path[->](b) edge[bend right=20] (a);
\draw[->](b) to (c);
\draw[->](c) to (a);
\path[->](c) edge[bend right=20] (d);
\path[->](d) edge[bend right=20] (c);
\path[->](d) edge[out=145,in=215,distance=20] (d);
\path[->](b) edge[out=55,in=125,distance=20] (b);
\node[NullNode] (n) at (-3.1,0){\small$G=$};
\begin{scope}[shift={(5,0)}]
\node[WhiteNode] (a) at (0:\radi){};
\node[WhiteNode] (b) at (90:\radi){};
\node[WhiteNode] (c) at (180:\radi){};
\node[WhiteNode] (d) at (270:\radi){};
\node[MedGrayNode] (e) at (2.5*\radi,0){};
\path[->](a) edge[bend right=20] (e);
\path[->](e) edge[bend right=20] (a);
\path[->](c) edge[bend right=20] (b);
\path[->](b) edge[bend right=20] (c);
\draw[->](b) to (a);
\draw[->](a) to (d);
\draw[->](d) to (c);
\path[->](e) edge[out=90,in=20,distance=20] (e);
\path[->](e) edge[out=340,in=270,distance=20] (e);
\node[NullNode] (n) at (-1.5,0){\small$G'=$};
\end{scope}
\end{tikzpicture}
\end{equation*}
Let $\sigma$ be the length function $\ell$, which is additive as necessary. Then the path-sum continued fraction representation of the ordinary generating function of all rooted closed walks from vertex $\GrayV$ to itself on $G$ and $G'$ associated with the length function is 
\begin{align*}
 \sum_{w\in\mathcal{W}_{G:\GrayVii\to \GrayVii}}z^{\ell(w)}&=\frac{1}{1-z-z^2-\frac{z^3}{1-\frac{z^2}{1-z}}}=(\mat{I}-z\mat{A}_{G})^{-1}_{\GrayVii\!\GrayVii},\\
 \sum_{w\in\mathcal{W}_{G':\GrayVii\to \GrayVii}}z^{\ell(w)}&=\frac{1}{1-z-z-\frac{z^2}{1-\frac{z^4}{1-z^2}}}=(\mat{I}'-z\mat{A}_{G'})^{-1}_{\GrayVii\!\GrayVii},
\end{align*}
where $\mat{A}_G$ and $\mat{A}_{G'}$ are the adjacency matrices of $G$ and $G'$, respectively. The common structure $T$ of both fractions is readily apparent and stems from the hike dependency graph $H=\phi(G)=\phi(G')$ as explained in the proof of Proposition~\ref{CombiInvar}:
\vspace{-3mm}
\begin{equation*}
\begin{tikzpicture}[line width=1,Centering]
\def\radi{0.7}
\node[SmallWhiteLabelNode] (a) at (300:\radi){$a$};
\node[SmallWhiteLabelNode] (b) at (60:\radi){$b$};
\node[SmallWhiteLabelNode] (c) at (180:\radi){$c$};
\node[SmallWhiteLabelNode] (d) at (180:2.5*\radi){$d$};
\node[SmallWhiteLabelNode] (e) at (180:4*\radi){$e$};
\draw (a) to (b);
\draw (b) to (c);
\draw (c) to (a);
\draw (c) to (d);
\draw (d) to (e);
\node[NullNode] (n) at (-3.5,0){\small$H=$};
\begin{scope}[shift={(4.7,1)}]
\def\radi{0.8}
\node[SmallBlackNode] (r) at (0,0){};
\node[SmallWhiteLabelNode] (b) at (0,-\radi){$b$};
\node[SmallWhiteLabelNode] (a) at (-0.8*\radi,-\radi){$a$};
\node[SmallWhiteLabelNode] (c) at (0.8*\radi,-\radi){$c$};
\node[SmallWhiteLabelNode] (d) at (1.5*\radi,-1.8*\radi){$d$};
\node[SmallWhiteLabelNode] (e) at (2.1*\radi,-2.6*\radi){$e$};
\draw(r) to (a);
\draw(r) to (b);
\draw(r) to (c);
\draw(c) to (d);
\draw(d) to (e);
\node[NullNode] (n) at (-1.5,-1){\small$T=$};
\end{scope}
\end{tikzpicture}
\vspace{-1mm}
\end{equation*}

In $G$, $a$ and $e$ are self-loops, $b$ and $d$ are backtracks and $c$ is a triangle. In $G'$, $a$ and $b$ are self-loops, $c$ and $e$ are backtracks and $d$ is a square.
Since $\Omega$ is additive the same structure $T$  appears in $f_{\mathcal{W}_{G:1\to 1}}^{\Omega}(z)= f_{\mathcal{W}_{G':1\to 1}}^{\Omega}(z)$,
\begin{align*}
 &f_{\mathcal{W}_{G:1\to 1}}^{\Omega}(z)=f_{\mathcal{W}_{G':1\to 1}}^{\Omega}(z)=\frac{1}{1-z-z-\frac{z}{1-\frac{z}{1-z}}}.
\end{align*}
\end{example}

\section{Conclusion}
In this work we investigated the relation between digraphs and their walks and found it to be rather weak: no graph theoretic property seems to be directly related to the arrangement of simple cycles on a digraph. Among the graph  properties we found may be lost whilst leaving the hike monoidal structure of cycles invariant are vertex-transitivity, regularity, planarity, bipartiteness, (bi)directedness, Hamiltonicity, being Eulerian, being chordal, being triangle-free, chromatic number, graph spectra, in- and out-degree distributions and a majority of algebraic quantities computable from adjacency matrices. This list is undoubtedly non-exhaustive. As a corollary, there is a great variety of digraphs with isomorphic hike monoids and characterizing all transformations relating such digraphs remains completely open.

Conversely, just deciding which arrangements of simple cycles exist at all  is highly non-trivial, even allowing for multidigraphs to realize them as was done here. We have shown that realizability is equivalent to the existence of integer solutions to polynomial systems of equations, making realizability decidable but no less obscure. We emphasize that both the characterization and existence questions concern simple undirected graphs too as soon as information about their line graph is removed. That is, simple graphs do not relate to their simple cycles of length $\ell \neq 2$ any better than multidigraphs do to their simple cycles of any length.  

All of this demonstrates that walks are in fact much less dependent on the digraphs on which they take place than might have first been thought; and that a {``\emph{theory of walks}''} distinct from graph theory needs to be developed. Here we proceeded by relying on hike monoids, which provide a representation for walks and walk-like objects that is markedly detached from graphs sustaining those walks. As monoids, hike monoids are plain trace monoids. Unfortunately, while the latter
are well understood, there is no simple way to know which trace monoids are hike monoids. Although we have formulated most of our results in terms of dependency graphs, expressing them directly as statements on trace monoids makes this fact even more clear. Consider for example the
 following family of monoids with  identical independence relations but differing number of generators:
\begin{align*}
        T_0 &= \set{a,b,c,d\,|\, ac = ca,\, bd = db},\\
        T_n &= \set{a,b,c,d,x_1,\dots, x_n\,|\, ac = ca,\, bd = db}.
\end{align*}
Among these very similar trace monoids, we know that $T_0$, $T_1$, $T_3$, $T_5$, $T_9$, $T_{17}$ and $T_{29}$ are not hike monoids, while all others are.
Trace theory does not seem to be any better equipped than graph theory to address the questions raised here.
\\

\section*{Acknowledgments}
T. K. acknowledges postdoctoral funding by the Universit\'e du Littoral C\^ote d'Opale. This work is supported by the French National Research Agency (ANR) through the ANR-19-CE40-0006 project Algebraic Combinatorics of Hikes On Lattices (Alcohol).


\appendix
\section{Nine unrealizable graphs on 7 vertices}\label{AppA}

Consider the following nine graphs on seven vertices

\begin{equation*}
\begin{tikzpicture}[Centering,line width=1]
\def\radi{1}
\def\radii{0.5}
\def\alpha{40}
\node[WhiteNode](c) at (0,0){};
\node[WhiteNode](b1) at (\alpha:\radii){};
\node[WhiteNode](b2) at (120+\alpha:\radii){};
\node[WhiteNode](b3) at (240+\alpha:\radii){};
\node[WhiteNode](a1) at (0:\radi){};
\node[WhiteNode](a2) at (120:\radi){};
\node[WhiteNode](a3) at (240:\radi){};
\node[NullNode](l) at (-1.25,0){\small ${H_1}=$};
\draw[-](c) to (a1);
\draw[-](c) to (a2);
\draw[-](c) to (a3);
\draw[-](c) to (b1);
\draw[-](c) to (b2);
\draw[-](c) to (b3);
\draw[-](a1) to (b1);
\draw[-](a2) to (b2);
\draw[-](a3) to (b3);
\path[-](a1) edge[bend right=30] (a2);
\path[-](a2) edge[bend right=30] (a3);
\path[-](a3) edge[bend right=30] (a1);
\end{tikzpicture}
\end{equation*}

\begin{equation*}
\begin{tikzpicture}[line width=1,Centering]
\def\radi{1}
\def\radii{0.5*\radi}
\def\alpha{45}
\def\height{1}
\node[WhiteNode](a) at ({\radi*cos(\alpha)},{\radii*sin(\alpha)}){};
\node[WhiteNode](b) at ({\radi*cos(\alpha+90)},{\radii*sin(\alpha+90)}){};circle(0.1);
\node[WhiteNode](c) at ({\radi*cos(\alpha+180)},{\radii*sin(\alpha+180)}){};circle(0.1);
\node[WhiteNode](d) at ({\radi*cos(\alpha+270)},{\radii*sin(\alpha+270)}){};circle(0.1);
\node[WhiteNode](e) at (0,\height){};
\node[WhiteNode](f) at (0,-\height){};
\node[WhiteNode](g) at (1.4,0){};
\draw[color=lightgray,line width=2](a) to (b);
\draw[color=lightgray,line width=2](b) to (c);
\draw[color=lightgray,line width=2](c) to (d);
\draw[color=lightgray,line width=2](d) to (a);
\draw[color=lightgray,line width=2](a) to (e);
\draw[color=lightgray,line width=2](b) to (e);
\draw[color=lightgray,line width=2](c) to (e);
\draw[color=lightgray,line width=2](d) to (e);
\draw[color=lightgray,line width=2](a) to (f);
\draw[color=lightgray,line width=2](b) to (f);
\draw[color=lightgray,line width=2](c) to (f);
\draw[color=lightgray,line width=2](d) to (f);
\draw[color=lightgray,line width=2](e) to (f);
\draw(g) to (a);
\node[NullNode](l) at (-1.25,0){\small $\modifJ{E_2}{H_2}=$};
\end{tikzpicture}
\begin{tikzpicture}[line width=1,Centering]
\def\radi{1}
\def\radii{0.5*\radi}
\def\alpha{45}
\def\height{1}
\node[WhiteNode](a) at ({\radi*cos(\alpha)},{\radii*sin(\alpha)}){};
\node[WhiteNode](b) at ({\radi*cos(\alpha+90)},{\radii*sin(\alpha+90)}){};circle(0.1);
\node[WhiteNode](c) at ({\radi*cos(\alpha+180)},{\radii*sin(\alpha+180)}){};circle(0.1);
\node[WhiteNode](d) at ({\radi*cos(\alpha+270)},{\radii*sin(\alpha+270)}){};circle(0.1);
\node[WhiteNode](e) at (0,\height){};
\node[WhiteNode](f) at (0,-\height){};
\node[WhiteNode](g) at (1.4,0){};
\draw[color=lightgray,line width=2](a) to (b);
\draw[color=lightgray,line width=2](b) to (c);
\draw[color=lightgray,line width=2](c) to (d);
\draw[color=lightgray,line width=2](d) to (a);
\draw[color=lightgray,line width=2](a) to (e);
\draw[color=lightgray,line width=2](b) to (e);
\draw[color=lightgray,line width=2](c) to (e);
\draw[color=lightgray,line width=2](d) to (e);
\draw[color=lightgray,line width=2](a) to (f);
\draw[color=lightgray,line width=2](b) to (f);
\draw[color=lightgray,line width=2](c) to (f);
\draw[color=lightgray,line width=2](d) to (f);
\draw[color=lightgray,line width=2](e) to (f);
\draw(g) to (a);
\draw(g) to (d);
\node[NullNode](l) at (-1.25,0){\small$\modifJ{E_3}{H_3}=$};
\end{tikzpicture}
\begin{tikzpicture}[line width=1,Centering]
\def\radi{1}
\def\radii{0.5*\radi}
\def\alpha{45}
\def\height{1}
\node[WhiteNode](a) at ({\radi*cos(\alpha)},{\radii*sin(\alpha)}){};
\node[WhiteNode](b) at ({\radi*cos(\alpha+90)},{\radii*sin(\alpha+90)}){};circle(0.1);
\node[WhiteNode](c) at ({\radi*cos(\alpha+180)},{\radii*sin(\alpha+180)}){};circle(0.1);
\node[WhiteNode](d) at ({\radi*cos(\alpha+270)},{\radii*sin(\alpha+270)}){};circle(0.1);
\node[WhiteNode](e) at (0,\height){};
\node[WhiteNode](f) at (0,-\height){};
\node[WhiteNode](g) at (1.4,0){};
\draw[color=lightgray,line width=2](a) to (b);
\draw[color=lightgray,line width=2](b) to (c);
\draw[color=lightgray,line width=2](c) to (d);
\draw[color=lightgray,line width=2](d) to (a);
\draw[color=lightgray,line width=2](a) to (e);
\draw[color=lightgray,line width=2](b) to (e);
\draw[color=lightgray,line width=2](c) to (e);
\draw[color=lightgray,line width=2](d) to (e);
\draw[color=lightgray,line width=2](a) to (f);
\draw[color=lightgray,line width=2](b) to (f);
\draw[color=lightgray,line width=2](c) to (f);
\draw[color=lightgray,line width=2](d) to (f);
\draw[color=lightgray,line width=2](e) to (f);
\draw(g) to (a);
\draw(g) to (d);
\path(g) edge[bend right=40] (e);
\node[NullNode](l) at (-1.25,0){\small$\modifJ{E_4}{H_4}=$};
\end{tikzpicture}
\begin{tikzpicture}[line width=1,Centering]
\def\radi{1}
\def\radii{0.5*\radi}
\def\alpha{45}
\def\height{1}
\node[WhiteNode](a) at ({\radi*cos(\alpha)},{\radii*sin(\alpha)}){};
\node[WhiteNode](b) at ({\radi*cos(\alpha+90)},{\radii*sin(\alpha+90)}){};circle(0.1);
\node[WhiteNode](c) at ({\radi*cos(\alpha+180)},{\radii*sin(\alpha+180)}){};circle(0.1);
\node[WhiteNode](d) at ({\radi*cos(\alpha+270)},{\radii*sin(\alpha+270)}){};circle(0.1);
\node[WhiteNode](e) at (0,\height){};
\node[WhiteNode](f) at (0,-\height){};
\node[WhiteNode](g) at (1.4,0){};
\draw[color=lightgray,line width=2](a) to (b);
\draw[color=lightgray,line width=2](b) to (c);
\draw[color=lightgray,line width=2](c) to (d);
\draw[color=lightgray,line width=2](d) to (a);
\draw[color=lightgray,line width=2](a) to (e);
\draw[color=lightgray,line width=2](b) to (e);
\draw[color=lightgray,line width=2](c) to (e);
\draw[color=lightgray,line width=2](d) to (e);
\draw[color=lightgray,line width=2](a) to (f);
\draw[color=lightgray,line width=2](b) to (f);
\draw[color=lightgray,line width=2](c) to (f);
\draw[color=lightgray,line width=2](d) to (f);
\draw[color=lightgray,line width=2](e) to (f);
\draw(g) to (a);
\path(g) edge[bend left=40] (f);
\path(g) edge[bend right=40] (e);
\node[NullNode](l) at (-1.25,0){\small$\modifJ{E_5}{H_5}=$};
\end{tikzpicture}
\end{equation*}
\begin{equation*}
\begin{tikzpicture}[line width=1,Centering]
\def\radi{1}
\def\radii{0.5*\radi}
\def\alpha{45}
\def\height{1}
\node[WhiteNode](a) at ({\radi*cos(\alpha)},{\radii*sin(\alpha)}){};
\node[WhiteNode](b) at ({\radi*cos(\alpha+90)},{\radii*sin(\alpha+90)}){};circle(0.1);
\node[WhiteNode](c) at ({\radi*cos(\alpha+180)},{\radii*sin(\alpha+180)}){};circle(0.1);
\node[WhiteNode](d) at ({\radi*cos(\alpha+270)},{\radii*sin(\alpha+270)}){};circle(0.1);
\node[WhiteNode](e) at (0,\height){};
\node[WhiteNode](f) at (0,-\height){};
\node[WhiteNode](g) at (1.4,0){};
\draw[color=lightgray,line width=2](a) to (b);
\draw[color=lightgray,line width=2](b) to (c);
\draw[color=lightgray,line width=2](c) to (d);
\draw[color=lightgray,line width=2](d) to (a);
\draw[color=lightgray,line width=2](a) to (e);
\draw[color=lightgray,line width=2](b) to (e);
\draw[color=lightgray,line width=2](c) to (e);
\draw[color=lightgray,line width=2](d) to (e);
\draw[color=lightgray,line width=2](a) to (f);
\draw[color=lightgray,line width=2](b) to (f);
\draw[color=lightgray,line width=2](c) to (f);
\draw[color=lightgray,line width=2](d) to (f);
\draw[color=lightgray,line width=2](e) to (f);
\draw(g) to (a);
\path(g) edge[bend left=40] (f);
\path(g) edge[bend right=40] (e);
\path(g) edge[out=280,in=270,distance=40] (c);
\path[color=white](g) edge[out=80,in=90,distance=40] (b);
\node[NullNode](l) at (-1.25,0){\small$\modifJ{E_6}{H_6}=$};
\end{tikzpicture}
\begin{tikzpicture}[line width=1,Centering]
\def\radi{1}
\def\radii{0.5*\radi}
\def\alpha{45}
\def\height{1}
\node[WhiteNode](a) at ({\radi*cos(\alpha)},{\radii*sin(\alpha)}){};
\node[WhiteNode](b) at ({\radi*cos(\alpha+90)},{\radii*sin(\alpha+90)}){};circle(0.1);
\node[WhiteNode](c) at ({\radi*cos(\alpha+180)},{\radii*sin(\alpha+180)}){};circle(0.1);
\node[WhiteNode](d) at ({\radi*cos(\alpha+270)},{\radii*sin(\alpha+270)}){};circle(0.1);
\node[WhiteNode](e) at (0,\height){};
\node[WhiteNode](f) at (0,-\height){};
\node[WhiteNode](g) at (1.4,0){};
\draw[color=lightgray,line width=2](a) to (b);
\draw[color=lightgray,line width=2](b) to (c);
\draw[color=lightgray,line width=2](c) to (d);
\draw[color=lightgray,line width=2](d) to (a);
\draw[color=lightgray,line width=2](a) to (e);
\draw[color=lightgray,line width=2](b) to (e);
\draw[color=lightgray,line width=2](c) to (e);
\draw[color=lightgray,line width=2](d) to (e);
\draw[color=lightgray,line width=2](a) to (f);
\draw[color=lightgray,line width=2](b) to (f);
\draw[color=lightgray,line width=2](c) to (f);
\draw[color=lightgray,line width=2](d) to (f);
\draw[color=lightgray,line width=2](e) to (f);
\draw(g) to (a);
\path(g) edge[bend left=40] (f);
\path(g) edge[bend right=40] (e);
\path(g) edge[out=280,in=270,distance=40] (c);
\path(g) edge[out=80,in=90,distance=40] (b);
\node[NullNode](l) at (-1.25,0){$\modifJ{E_7}{H_7}=$};
\end{tikzpicture}
\begin{tikzpicture}[line width=1,Centering]
\def\radi{1}
\def\radii{0.5*\radi}
\def\alpha{45}
\def\height{1}
\node[WhiteNode](a) at ({\radi*cos(\alpha)},{\radii*sin(\alpha)}){};
\node[WhiteNode](b) at ({\radi*cos(\alpha+90)},{\radii*sin(\alpha+90)}){};circle(0.1);
\node[WhiteNode](c) at ({\radi*cos(\alpha+180)},{\radii*sin(\alpha+180)}){};circle(0.1);
\node[WhiteNode](d) at ({\radi*cos(\alpha+270)},{\radii*sin(\alpha+270)}){};circle(0.1);
\node[WhiteNode](e) at (0,\height){};
\node[WhiteNode](f) at (0,-\height){};
\node[WhiteNode](g) at (1.4,0){};
\draw[color=lightgray,line width=2](a) to (b);
\draw[color=lightgray,line width=2](b) to (c);
\draw[color=lightgray,line width=2](c) to (d);
\draw[color=lightgray,line width=2](d) to (a);
\draw[color=lightgray,line width=2](a) to (e);
\draw[color=lightgray,line width=2](b) to (e);
\draw[color=lightgray,line width=2](c) to (e);
\draw[color=lightgray,line width=2](d) to (e);
\draw[color=lightgray,line width=2](a) to (f);
\draw[color=lightgray,line width=2](b) to (f);
\draw[color=lightgray,line width=2](c) to (f);
\draw[color=lightgray,line width=2](d) to (f);
\draw[color=lightgray,line width=2](e) to (f);
\draw(g) to (a);
\draw(g) to (d);
\path(g) edge[bend right=40] (e);
\path(g) edge[out=280,in=270,distance=40] (c);
\path(g) edge[out=80,in=90,distance=40] (b);
\node[NullNode](l) at (-1.25,0){\small$\modifJ{E_8}{H_8}=$};
\end{tikzpicture}
\begin{tikzpicture}[line width=1,Centering]
\def\radi{1}
\def\radii{0.5*\radi}
\def\alpha{45}
\def\height{1}
\node[WhiteNode](a) at ({\radi*cos(\alpha)},{\radii*sin(\alpha)}){};
\node[WhiteNode](b) at ({\radi*cos(\alpha+90)},{\radii*sin(\alpha+90)}){};circle(0.1);
\node[WhiteNode](c) at ({\radi*cos(\alpha+180)},{\radii*sin(\alpha+180)}){};circle(0.1);
\node[WhiteNode](d) at ({\radi*cos(\alpha+270)},{\radii*sin(\alpha+270)}){};circle(0.1);
\node[WhiteNode](e) at (0,\height){};
\node[WhiteNode](f) at (0,-\height){};
\node[WhiteNode](g) at (1.4,0){};
\draw[color=lightgray,line width=2](a) to (b);
\draw[color=lightgray,line width=2](b) to (c);
\draw[color=lightgray,line width=2](c) to (d);
\draw[color=lightgray,line width=2](d) to (a);
\draw[color=lightgray,line width=2](a) to (e);
\draw[color=lightgray,line width=2](b) to (e);
\draw[color=lightgray,line width=2](c) to (e);
\draw[color=lightgray,line width=2](d) to (e);
\draw[color=lightgray,line width=2](a) to (f);
\draw[color=lightgray,line width=2](b) to (f);
\draw[color=lightgray,line width=2](c) to (f);
\draw[color=lightgray,line width=2](d) to (f);
\draw[color=lightgray,line width=2](e) to (f);
\draw(g) to (a);
\draw(g) to (d);
\path(g) edge[bend right=40] (e);
\path(g) edge[bend left=40] (f);
\path(g) edge[out=280,in=270,distance=40] (c);
\path(g) edge[out=80,in=90,distance=40] (b);
\node[NullNode](l) at (-1.25,0){\small $\modifJ{E_9}{H_9}=$};
\end{tikzpicture}
\end{equation*}

System (\ref{surjectivity_pol_system}) admits no solution on minimal clique covers in the cases of \modifJ{$E_1,\,E_2,\,E_3,\, E_4,\,E_7,\, E_8$ and $E_9$}{$H_1,\,H_2,\,H_3,\, H_4,\,H_7,\, H_8$ and $H_9$}, while exhaustively checking for the absence 
of such a solution is not computationally feasible for $\modifJ{E_5}{H_5}$ and $\modifJ{E_6}{H_6}$. We present two ad-hoc arguments deciding the unrealizability of these graphs: the first for $\modifJ{E_1}{H_1}$ and the second for all other graphs.

\begin{description}
    \item[$\modifJ{E_1}{H_1}$] : Let us label $\modifJ{E_1}{H_1}$ as follows:
    \begin{equation*}
        \begin{tikzpicture}[Centering,line width=1]
        \def\radi{1.3}
        \def\radii{0.7}
        \def\alpha{40}
        \node[WhiteLabelNode](c) at (0,0){\small$c$};
        \node[WhiteLabelNode](b1) at (\alpha:\radii){\small$b_1$};
        \node[WhiteLabelNode](b2) at (120+\alpha:\radii){\small$b_2$};
        \node[WhiteLabelNode](b3) at (240+\alpha:\radii){\small$b_3$};
        \node[WhiteLabelNode](a1) at (0:\radi){\small$a_1$};
        \node[WhiteLabelNode](a2) at (120:\radi){\small$a_2$};
        \node[WhiteLabelNode](a3) at (240:\radi){\small$a_3$};
        \node[NullNode](l) at (-1.6,0){\small $\modifJ{E_1}{H_1}=$};
        \draw[-](c) to (a1);
        \draw[-](c) to (a2);
        \draw[-](c) to (a3);
        \draw[-](c) to (b1);
        \draw[-](c) to (b2);
        \draw[-](c) to (b3);
        \draw[-](a1) to (b1);
        \draw[-](a2) to (b2);
        \draw[-](a3) to (b3);
        \path[-](a1) edge[bend right=30] (a2);
        \path[-](a2) edge[bend right=30] (a3);
        \path[-](a3) edge[bend right=30] (a1);
        \end{tikzpicture}
    \end{equation*}
     Suppose that $\modifJ{E_1}{H_1}$ is realizable and let $G$ be a digraph realizing it. Each triangle ($c\,a_1\,b_1,\, c\,a_1\,a_2, \ldots$) in $\modifJ{E_1}{H_1}$ corresponds to three simple cycles in $G$ sharing a common vertex: indeed, if there was no such 
        vertex then by Proposition~\ref{observations_prop_cycles} there would be two vertices in $\modifJ{E_1}{H_1}$ linked to every vertex of the triangle, which contradicts the structure of $\modifJ{E_1}{H_1}$.
    
    Suppose now that there is no vertex common to all 4 cycles $a_1$, $a_2$, $a_3$ and $c$. Denote by $v_1$ a vertex common to $a_1$, $a_2$ and $c$; by $v_2$ a vertex common to
    $a_2$, $a_3$ and $c$; and by $v_3$ a vertex common to $a_3$, $a_1$ and $c$. For $i\in\modifJ{[3]}{\{1,2,3\}}$, and $v,v'$ two vertices of $a_i$, denote by $v\rightarrow_iv'$ the path from $v$ to $v'$ 
    in $a_i$. Let us now consider cycles $v_1\rightarrow_2 v_2\rightarrow_3 v_3 \rightarrow_1 v_1$ and $v_3\rightarrow_3 v_2\rightarrow_2 v_1 \rightarrow_1 v_3$ and show that they 
    are simple. Suppose that the first of these two cycle is not simple and, without loss of generality, let $v$ be a vertex such that 
    $v_1\rightarrow_2 v_2 = v_1\rightarrow_2 v \rightarrow_2 v_2$, $v_2\rightarrow_3 v_3 = v_2\rightarrow_3 v \rightarrow_3 v_3$ and $v \rightarrow_2 v_2 \rightarrow_3 v$ is a simple 
    cycle. Then, by construction, this simple cycle can neither be any of $a_1$, $a_2$, $a_3$ and $c$ since there are missing vertices; nor can it be any of the $b_i$s since it is in contact with two $a_i$s which would
    contradicts the structure of $\modifJ{E_1}{H_1}$. Hence $v_1\rightarrow_2 v_2\rightarrow_3 v_3 \rightarrow_1 v_1$ is a simple cycle and, similarly, so is 
    $v_3\rightarrow_3 v_2\rightarrow_2 v_1 \rightarrow_1 v_3$. These two simple cycles can not be identifiable with one of the $a_i$s since else one of the vertices $v_i$ 
    would be common to $a_1$, $a_2$, $a_3$ and $c$. Neither can these two cycles be identifiable with one of the $b_i$s because they are in contact with more than two $a_i$s. Hence they must be distinct yet both be equal to $c$, a contradiction.
    
    We are thus forced to consider that there exists a vertex $v_0$ common to all 4 cycles $a_1$, $a_2$, $a_3$ and $c$. As earlier, denote by $v_1$ a vertex common to $a_1$, $b_1$ and $c$; by $v_2$ a vertex common 
    to $a_2$, $b_2$ and $c$; and by $v_3$ a vertex common to $a_3$, $b_3$ and $c$. Then $\{v_0,\,v_1,\,v_2,\,v_3\}$ corresponds to a minimal clique cover of $\modifJ{E_1}{H_1}$, implying that  $G$ would provide a solution to system (\ref{surjectivity_pol_system}) with a minimal clique cover which, we have checked, does not exist.
    
    \item[$\modifJ{E_2}{H_2}$ to $\modifJ{E_9}{H_9}$] : We know from Proposition~\ref{observations_prop_cycles} and the example preceding this proposition that if a realizable graph has a square as induced 
    subgraph then necessarily this square is part of an induced subgraph with the shape of $H$ on the left of \ref{EqSquare}. Furthermore, we also know that the simple cycles corresponding
    to the vertices of this subgraph must be organised as in Figure~\ref{F:SquareConstruction}.
    Now remark that adding an edge which is not a self-loop to Figure~\ref{F:SquareConstruction} always creates two simple cycles so that the corresponding image by $\phi$
    must have at least $8$ vertices. Consequently, the only way to realize a graph with $7$ vertices and a square as induced subgraph is to add a self-loop somewhere in Figure~\ref{F:SquareConstruction}.
    There are only two ways to do so which yield distinct hike monoids: either by adding the self-loop on a vertex that lies inside of one of the simple cycles $a,b,c,d$ and in no other cycle; or on a vertex that is in common to two cycles. The image by $\phi$ of the digraphs so-obtained are respectively:
    \begin{equation*}
\begin{tikzpicture}[line width=1,Centering]
\def\radi{0.4}
\def\alpha{16.5}
\node[WhiteNode](c1) at (90:\radi){};
\node[WhiteNode](c2) at (180:\radi){};
\node[WhiteNode](c3) at (270:\radi){};
\node[WhiteNode](c4) at (0:\radi){};
\node[NullNode](oc1) at (-\radi,\radi){\small$a$};
\node[NullNode](oc2) at (\radi,\radi){\small$b$};
\node[NullNode](oc3) at (\radi,-\radi){\small$c$};
\node[NullNode](oc4) at (-\radi,-\radi){\small$d$};
\draw[->,gray,line width=0.65](-\radi,\radi)+(\alpha:\radi)  arc (\alpha:270-\alpha:\radi);
\draw[->,gray,line width=0.65] (-\radi,\radi)+(270+\alpha:\radi) arc (-90+\alpha:-\alpha:\radi);
\draw[->,gray,line width=0.65](\radi,\radi)+(270+\alpha:\radi) arc (270+\alpha:540-\alpha:\radi);
\draw[->,gray,line width=0.65](\radi,\radi)+(180+\alpha:\radi) arc (180+\alpha:270-\alpha:\radi);
\draw[->,gray,gray,line width=0.65](\radi,-\radi)+(180+\alpha:\radi) arc (180+\alpha:450-\alpha:\radi);
\draw[->,gray,line width=0.65](\radi,-\radi)+(90+\alpha:\radi) arc (90+\alpha:180-\alpha:\radi);
\draw[->,gray,line width=0.65](-\radi,-\radi)+(90+\alpha:\radi) arc (90+\alpha:360-\alpha:\radi);
\draw[->,gray,line width=0.65](-\radi,-\radi)+(\alpha:\radi) arc (\alpha:90-\alpha:\radi);
\node[SmallBlackNode](e) at ($(-\radi,\radi)+(135:\radi)$){};
\path[->](e) edge[out=100,in=170,distance=20] (e);

\begin{scope}[shift={(3,0)}]
\def\radi{1}
\def\radii{0.5*\radi}
\def\alpha{45}
\def\height{1}
\node[WhiteNode](a) at ({\radi*cos(\alpha)},{\radii*sin(\alpha)}){};
\node[WhiteNode](b) at ({\radi*cos(\alpha+90)},{\radii*sin(\alpha+90)}){};circle(0.1);
\node[WhiteNode](c) at ({\radi*cos(\alpha+180)},{\radii*sin(\alpha+180)}){};circle(0.1);
\node[WhiteNode](d) at ({\radi*cos(\alpha+270)},{\radii*sin(\alpha+270)}){};circle(0.1);
\node[WhiteNode](e) at (0,\height){};
\node[WhiteNode](f) at (0,-\height){};
\node[WhiteNode](g) at (1.4,0){};
\draw[color=lightgray,line width=2](a) to (b);
\draw[color=lightgray,line width=2](b) to (c);
\draw[color=lightgray,line width=2](c) to (d);
\draw[color=lightgray,line width=2](d) to (a);
\draw[color=lightgray,line width=2](a) to (e);
\draw[color=lightgray,line width=2](b) to (e);
\draw[color=lightgray,line width=2](c) to (e);
\draw[color=lightgray,line width=2](d) to (e);
\draw[color=lightgray,line width=2](a) to (f);
\draw[color=lightgray,line width=2](b) to (f);
\draw[color=lightgray,line width=2](c) to (f);
\draw[color=lightgray,line width=2](d) to (f);
\draw[color=lightgray,line width=2](e) to (f);
\draw(g) to (a);
\path(g) edge[bend right=40] (e);
\end{scope}
\node[NullNode](l1) at (1.6,0){$\longmapsto$};
\node[NullNode](l2) at (1.6,0.4){$\phi$};
\begin{scope}[shift={(7,0)}]
 \def\radi{0.4}
\def\alpha{16.5}
\node[WhiteNode](c1) at (90:\radi){};
\node[WhiteNode](c2) at (180:\radi){};
\node[WhiteNode](c3) at (270:\radi){};
\node[WhiteNode](c4) at (0:\radi){};
\node[NullNode](oc1) at (-\radi,\radi){\small$a$};
\node[NullNode](oc2) at (\radi,\radi){\small$b$};
\node[NullNode](oc3) at (\radi,-\radi){\small$c$};
\node[NullNode](oc4) at (-\radi,-\radi){\small$d$};
\draw[->,gray,line width=0.65](-\radi,\radi)+(\alpha:\radi)  arc (\alpha:270-\alpha:\radi);
\draw[->,gray,line width=0.65] (-\radi,\radi)+(270+\alpha:\radi) arc (-90+\alpha:-\alpha:\radi);
\draw[->,gray,line width=0.65](\radi,\radi)+(270+\alpha:\radi) arc (270+\alpha:540-\alpha:\radi);
\draw[->,gray,line width=0.65](\radi,\radi)+(180+\alpha:\radi) arc (180+\alpha:270-\alpha:\radi);
\draw[->,gray,gray,line width=0.65](\radi,-\radi)+(180+\alpha:\radi) arc (180+\alpha:450-\alpha:\radi);
\draw[->,gray,line width=0.65](\radi,-\radi)+(90+\alpha:\radi) arc (90+\alpha:180-\alpha:\radi);
\draw[->,gray,line width=0.65](-\radi,-\radi)+(90+\alpha:\radi) arc (90+\alpha:360-\alpha:\radi);
\draw[->,gray,line width=0.65](-\radi,-\radi)+(\alpha:\radi) arc (\alpha:90-\alpha:\radi);
\path[->](c1) edge[out=35,in=-35,distance=25] (c1);
\end{scope}
\node[NullNode](l3) at (8.6,0){$\longmapsto$};
\node[NullNode](l4) at (8.6,0.4){$\phi$};
\begin{scope}[shift={(10,0)}]
\def\radi{1}
\def\radii{0.5*\radi}
\def\alpha{45}
\def\height{1}
\node[WhiteNode](a) at ({\radi*cos(\alpha)},{\radii*sin(\alpha)}){};
\node[WhiteNode](b) at ({\radi*cos(\alpha+90)},{\radii*sin(\alpha+90)}){};circle(0.1);
\node[WhiteNode](c) at ({\radi*cos(\alpha+180)},{\radii*sin(\alpha+180)}){};circle(0.1);
\node[WhiteNode](d) at ({\radi*cos(\alpha+270)},{\radii*sin(\alpha+270)}){};circle(0.1);
\node[WhiteNode](e) at (0,\height){};
\node[WhiteNode](f) at (0,-\height){};
\node[WhiteNode](g) at (1.4,0){};
\draw[color=lightgray,line width=2](a) to (b);
\draw[color=lightgray,line width=2](b) to (c);
\draw[color=lightgray,line width=2](c) to (d);
\draw[color=lightgray,line width=2](d) to (a);
\draw[color=lightgray,line width=2](a) to (e);
\draw[color=lightgray,line width=2](b) to (e);
\draw[color=lightgray,line width=2](c) to (e);
\draw[color=lightgray,line width=2](d) to (e);
\draw[color=lightgray,line width=2](a) to (f);
\draw[color=lightgray,line width=2](b) to (f);
\draw[color=lightgray,line width=2](c) to (f);
\draw[color=lightgray,line width=2](d) to (f);
\draw[color=lightgray,line width=2](e) to (f);
\draw(g) to (a);
\draw(g) to (d);
\path(g) edge[bend right=40] (e);
\path(g) edge[bend left=40] (f);
\end{scope}
\node[NullNode](l5) at (5.4,0){and};
\end{tikzpicture}
    \end{equation*}
    Since graphs $\modifJ{E_2}{H_2}$ to $\modifJ{E_9}{H_9}$ all have a square as induced subgraph but are different from the above two graphs, they are unrealizable.
\end{description}

\section*{References}
\bibliographystyle{elsarticle-num}

\end{document}